\tikzset{mynode/.style={draw,circle,inner sep=2pt,outer sep=0pt}}
\theoremstyle{plain}
\newtheorem{theorem}{Theorem}[section]
\newtheorem{proposition}[theorem]{Proposition}
\newtheorem{lemma}[theorem]{Lemma}
\newtheorem{corollary}[theorem]{Corollary}
\theoremstyle{definition}
\newtheorem{definition}[theorem]{\it Definition}
\newtheorem{algorithm}[theorem]{\it Algorithm}
\theoremstyle{remark}
\newtheorem{remark}[theorem]{Remark}
\begin{document}

\title{A complete characterization of monotonicity equivalence for continuous-time Markov processes}

\author{
Motoya Machida \\
{\normalsize Department of Mathematics, Tennessee Technological University} \\
{\normalsize Cookeville, TN 38505, USA}
}

\date{\today}

\maketitle

\begin{abstract}
Dai Pra~{et al.}\ studied two notions of monotonicity for continuous-time Markov
processes on a finite partially ordered set (poset), and conjectured that monotonicity
equivalence holds for a poset of W-glued diamond, and that
there is no other case when it has no acyclic extension.
We proved their conjecture and were able to provide a complete
characterization of posets for
monotonicity equivalence.

\medskip\par\noindent
{\em AMS\/} 2020 {\em subject classifications.\/}
Primary 60J27;
secondary 60E05, 06A06.

\medskip\par\noindent
{\em Keywords:\/}
Continuous-time Markov processes,
coupling,
partially ordered set,
stochastic monotonicity,
realizable monotonicity,
monotonicity equivalence,
Strassen's theorem.

\end{abstract}

\section{Introduction}
\setcounter{equation}{0}\setcounter{figure}{0}

Let $\mathcal{A}$ and $\mathcal{S}$
be finite partially ordered sets (posets),
and let $P=(P_\alpha:\alpha\in\mathcal{A})$ be a system
of probability measures on $\mathcal{S}$.
In this paper the symbol $P_\alpha$ for a measure is interchangeably used for a
probability mass function (pmf).
A subset $U$ of $\mathcal{S}$ is called an \emph{up-set} (or \emph{increasing set})
if $x \in U$ and $x \le y$ in $\mathcal{S}$ imply $y \in U$.
For a pair $(P_\alpha, P_\beta)$ of probability measures
we say that $P_\alpha$ is \emph{stochastically smaller} than $P_\beta$,
denoted by $P_\alpha \preceq P_\beta$,
if $P_\alpha(U) \le P_\beta(U)$ holds for every up-set $U$ in $\mathcal{S}$,
We call the system $P$ \emph{stochastically monotone}
if the stochastic ordering is consistent with the ordering of $\mathcal{A}$,
that is, if $P_\alpha \preceq P_\beta$ whenever $\alpha\le\beta$ in $\mathcal{A}$.
Consider a collection $(\xi(\alpha): \alpha\in\mathcal{A})$
of $S$-valued random variables
on some probability space $(\Omega,\mathcal{F},\mathbb{P})$.
Then it can be viewed as a random map $\xi$ from $\mathcal{A}$
to $\mathcal{S}$,
and it realizes the system $P$ if it satisfies
${\mathbb{P}(\xi(\alpha) = x)} = P_\alpha(x)$ for $\alpha\in\mathcal{A}$.
The random map $\xi$ is said to be \emph{monotone}
(or \emph{increasing}) if $\xi(\alpha) \le \xi(\beta)$ in $\mathcal{S}$
whenever $\alpha\le\beta$ in $\mathcal{A}$.
The system $P$ is called \emph{realizably monotone}
if it is realized by such a random monotone map $\xi$.
In general realizable monotonicity is sufficient but not necessary
for stochastic monotonicity.

In \cite{fm2001,fm2002,classw}
we examined these notions of monotonicity
and characterized pairs $(\mathcal{A},\mathcal{S})$ of posets
for which monotonicity equivalence holds.
In~\cite{fm2001} we consider the case for $\mathcal{A} = \mathcal{S}$,
view the system $P=(P(x,\cdot): x\in\mathcal{S})$
as a transition probability $P(x,y)$ of
discrete-time Markov chain on $\mathcal{S}$,
and obtain

\begin{proposition}\label{fm.claim}
Monotonicity equivalence holds for a discrete-time Markov chain on $\mathcal{S}$
if and only if $\mathcal{S}$ is acyclic
(see Section~\ref{poset.subclass} for definition of an acyclic poset).
\end{proposition}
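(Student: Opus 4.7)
The plan is to prove both implications, with Strassen's theorem for pairwise monotone couplings as the fundamental tool.

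For the \emph{sufficiency} direction (acyclic $\Rightarrow$ monotonicity equivalence), assume $\mathcal{S}$ is acyclic and that $P$ is stochastically monotone. By Strassen's theorem, for each comparable pair $x \le y$ in $\mathcal{S}$ there exists a monotone coupling of $P(x,\cdot)$ and $P(y,\cdot)$. I would construct a joint realization $\xi=(\xi(x):x\in\mathcal{S})$ by induction along a linear extension of $\mathcal{S}$: at each step, the already-constructed joint law is extended by one more coordinate, using the pairwise Strassen coupling against each previously incorporated comparable element. The acyclicity hypothesis is precisely what guarantees that these pairwise constraints can be simultaneously satisfied at every inductive step, rather than producing conflicting demands on the conditional distribution of the newly added coordinate.

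For the \emph{necessity} direction, I would argue by contrapositive: given a non-acyclic $\mathcal{S}$, exhibit a stochastically monotone system on $\mathcal{S}$ that admits no monotone realization. The idea is to locate within $\mathcal{S}$ the offending cyclic substructure (the very configuration the acyclicity condition is designed to forbid) and design transition probabilities $P(x,\cdot)$ concentrated effectively on this substructure. Pairwise monotone couplings between $P(x,\cdot)$ and $P(y,\cdot)$ will exist thanks to a careful choice of mass, but when one tries to splice them into a single monotone random map, the cycle forces an inconsistency that can be read off by summing consistency relations around the cycle and comparing with the marginal constraints.

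The main obstacle is the necessity direction. The sufficiency argument, while technical, is an essentially standard inductive gluing once Strassen's theorem is available; the subtle point is merely verifying compatibility with all previously placed predecessors and successors, and this is exactly where acyclicity is consumed. By contrast, the necessity direction requires identifying the correct combinatorial obstruction inherent to non-acyclic posets and translating it into explicit probability masses that provably refuse any monotone realization. This step hinges on the precise definition of acyclicity (deferred to Section~\ref{poset.subclass}) and on choosing the probabilities sharply enough that the cycle-induced contradiction is strict, not borderline.
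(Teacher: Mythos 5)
There is a genuine gap, and it sits exactly where you claim the argument is ``essentially standard.'' Note first that the paper does not reprove this proposition at all: it is imported from the earlier work \cite{fm2001}, whose sufficiency half proceeds by induction on the \emph{tree structure} of the Hasse diagram --- each new vertex is attached to exactly one already-processed vertex by a Hasse edge, is coupled to that single neighbour by Strassen, and all other comparabilities then follow by transitivity; this is precisely the single-vertex gluing of Lemma~\ref{index.glued} invoked for Theorem~\ref{acyclic} in Section~\ref{strassen}. Your scheme is different: you add coordinates along a \emph{linear extension} and couple the new coordinate pairwise ``against each previously incorporated comparable element.'' That step is not justified and, as described, can fail. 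Take $\mathcal{S}=\{a,b,c\}$ with $a<c$, $b<c$ and $a,b$ incomparable (an acyclic poset), and the stochastically monotone kernel $P_a=P_b=\tfrac12(I_a+I_b)$, $P_c=\tfrac12(I_a+I_c)$. Processing $a,b,c$ in linear-extension order and choosing, say, $\xi(a)$ and $\xi(b)$ independent at the second step, one gets $\mathbb{P}(\xi(a)=b\ \mbox{or}\ \xi(b)=b)=\tfrac34$, and on that event monotonicity forces $\xi(c)=c$, contradicting $P_c(c)=\tfrac12$; a monotone realization exists (couple $\xi(a)=\xi(b)$, i.e.\ effectively process $a,c,b$ in tree order), but your induction gives no rule for making the earlier joint laws compatible with later steps. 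So the assertion that ``acyclicity is precisely what guarantees that these pairwise constraints can be simultaneously satisfied'' is the whole content of the theorem, not a bookkeeping remark: simultaneous satisfaction of pairwise Strassen constraints is exactly what distinguishes realizable from stochastic monotonicity, and the correct use of acyclicity is the one-neighbour-at-a-time attachment plus transitivity, which your outline never articulates.

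The necessity direction is likewise only a plan. ``Locate the cyclic substructure and sum consistency relations around the cycle'' does not identify which substructures occur (in \cite{fm2001} one needs a structural result that a non-acyclic Hasse diagram contains one of a short list of minimal cyclic configurations --- crowns, diamonds, bowties and their subdivisions, cf.\ Proposition~5.11 there), nor does it produce the explicit measures. The actual counterexamples are of the type displayed in this paper's Lemmas~\ref{y.prop.necessity}, \ref{unique.diamond} and \ref{no.yposet}: concrete choices such as $P_a=\tfrac12 I_{\{e,f\}}$, $P_b=\tfrac12 I_{\{f,h\}}$, etc., followed by a pointwise argument that any monotone random map would assign some state probability at least $\theta$ while its prescribed mass is $\theta/2$, together with an extension of the kernel to the rest of $\mathcal{S}$ preserving stochastic monotonicity. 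Until you exhibit such measures for every non-acyclic $\mathcal{S}$ (or reduce to the minimal configurations and handle each), the ``if'' half of the proposition remains unproved in your write-up.
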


Dai Pra~{et al.}~\cite{pra}
proposed the notion of realizable monotonicity
for continuous-time Markov processes,
and studied monotonicity equivalence problems for various posets.
Their investigation uses a generator representation extensively,
and yields examples of non-acyclic poset
for which two notions of monotonicity are equivalent,
while it produces many other examples of monotonicity inequivalence.
In Section~\ref{mc.monotone} we introduce notions of monotonicity for
continuous-time Markov processes and correspond them to the notions
of weak monotonicity for discrete-time Markov chains in the sense
of Proposition~\ref{weak.equivalence},
by which the case of Proposition~\ref{main.claim}(i)
is immediately verified.
In Section~\ref{acyclic.extension.section}
we solve a monotonicity equivalence problem
[i.e., the case for Proposition~\ref{main.claim}(ii)]
when the poset has an acyclic extension.
In Section~\ref{pra.conjecture.section}
we examine it when the poset has no acyclic extension,
and complete the proof of a conjecture by~\cite{pra}
[i.e., the case for Proposition~\ref{main.claim}(iii)].
Consequently we are able to obtain a full characterization of finite posets
for continuous-time Markov processes to possess monotonicity equivalence.

\begin{proposition}\label{main.claim}
Monotonicity equivalence holds for a continuous-time Markov process on $\mathcal{S}$
if and only if $\mathcal{S}$ is
(i) acyclic,
(ii) a Y-glued bipartite (Definition~\ref{y-glued.bipartite}),
or
(iii) a W-glued diamond (Definition~\ref{w-glued.diamond}).
\end{proposition}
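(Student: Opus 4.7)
The plan is to follow the three-stage decomposition indicated by the introduction, working through the cases of Proposition~\ref{main.claim} in the order (i), (ii), (iii). First I would set up the translation between continuous-time monotonicity and a notion of \emph{weak monotonicity} for an associated discrete-time chain, which is the content of Proposition~\ref{weak.equivalence}. Once this correspondence is in place, asking whether stochastic monotonicity implies realizable monotonicity for continuous-time processes on $\mathcal{S}$ becomes equivalent to the same question for weakly monotone discrete-time chains on $\mathcal{S}$. Since every acyclic poset satisfies monotonicity equivalence even in the stronger discrete-time setting (Proposition~\ref{fm.claim}), case~(i) drops out immediately, and for the other cases I only need to study posets with cycles.

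Next I would handle case~(ii), namely posets that are non-acyclic but admit an acyclic extension. The strategy here is to pull back a realizable monotone map from the acyclic extension and check, via a Strassen-type consistency analysis on comparable chains, exactly when this lifting preserves the marginals of the original system. A combinatorial analysis of the cycles that can survive such an extension should isolate the Y-glued bipartite shape as the unique obstruction-free configuration; in the other direction, for posets in this class that are not Y-glued bipartite one exhibits a stochastically but not realizably monotone generator by assigning transition rates along a forbidden cycle so that Strassen's inequality is violated on a carefully chosen up-set.

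The main obstacle is case~(iii), the Dai Pra et al.\ conjecture for posets with no acyclic extension. The "only if" direction requires, for each such poset that is not a W-glued diamond, the construction of a stochastically monotone generator that admits no monotone coupling; I expect this to proceed by exhibiting a forbidden sub-configuration (analogous to the role played by cycles in~\cite{fm2001}) and transporting rates along it to break a Strassen inequality on an appropriate up-set. The "if" direction, constructing a monotone coupling for an arbitrary stochastically monotone generator on a W-glued diamond, is the technically hardest step because no acyclic extension is available to reduce to Proposition~\ref{fm.claim}. Here the coupling must be built intrinsically from the W-glued diamond structure, presumably by decomposing the generator into a finite collection of elementary rate-one transitions, each of which can be coupled monotonically on a single diamond, and then gluing these couplings together consistently along the shared W-spine.

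Combining the three cases then yields both directions of Proposition~\ref{main.claim} and completes the characterization.
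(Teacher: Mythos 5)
Your top-level decomposition matches the paper exactly: reduce continuous-time monotonicity equivalence to weak monotonicity equivalence for the discrete-time chain (Proposition~\ref{weak.equivalence} via Lemmas~\ref{sm.lem} and~\ref{rm.lem}), dispose of case (i) by Proposition~\ref{fm.claim}, and then split the non-acyclic posets according to whether an acyclic extension exists. The necessity directions you sketch are also in the right spirit, although in the paper they are not single ``Strassen violations'' but explicit finite counterexample systems together with a structural analysis: Lemma~\ref{y.prop.necessity} for case (ii), and Lemmas~\ref{pra.lemma}, \ref{unique.diamond}, \ref{no.yposet} (plus Remark~\ref{s4.remark}) for case (iii), which in particular force the uniqueness of the diamond and the W-class condition on each glued component.

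The genuine gap is in your sufficiency arguments, above all for case (iii). Decomposing the generator into elementary rate-one transitions, coupling each ``on a single diamond,'' and gluing along the spine cannot work as stated: by Lemma~\ref{rm.lem}, realizable monotonicity demands that $L$ be written as a nonnegative combination $L(x,y)=\sum_h \gamma(h) I(h(x),y)$ over monotone self-maps $h$ of the \emph{whole} poset, and the whole difficulty is that an individual jump permitted by stochastic monotonicity need not extend to such a global monotone map --- if a transition-by-transition coupling sufficed, equivalence would hold for every poset, contradicting the counterexamples. The paper instead builds the realization globally: it roots the W-class components at $d$, constructs recursive inverse transforms (a quantile-type coupling indexed by a rooted plane tree, Algorithms~\ref{rp.tree} and~\ref{map.alg}, Proposition~\ref{map.prop}), carefully modifies these transforms near the diamond to reroute mass from $b$ and $c$ to $a$ (Lemma~\ref{y.me}, Lemma~\ref{w.lemma}, Theorem~\ref{w.theorem}), and then glues in the acyclic pieces $\mathcal{W}_\alpha$ using Theorem~\ref{acyclic} and Lemma~\ref{index.glued}. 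Similarly, for case (ii) your phrase ``pull back a realizable monotone map from the acyclic extension'' skips the essential step: one must first extend the \emph{stochastically monotone system} to the acyclic extension by constructing a measure $\hat{P}_c$ at the new vertex, which uses the Y-class hypothesis through Theorem~\ref{y.extension} and is only possible after the rescaling $\theta P+(1-\theta)I$ (Lemma~\ref{y.lemma}) --- this rescaling is exactly where weak monotonicity enters and why the continuous-time result holds while the discrete-time one fails; only then does Proposition~\ref{fm.claim} on the acyclic extension deliver the monotone realization. Without these constructions your outline does not yet constitute a proof of the ``if'' directions.
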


In \cite{classw} we developed a notion of recursive inverse transform for a
poset $\mathcal{S}$ of W-class, and used it to examine monotonicity
equivalence problems
(e.g., Corollary~\ref{w.cor}), which we briefly overview in Section~\ref{rit}.
In section~\ref{w.sec} we demonstrate the sufficiency of stochastic
monotonicity by applying a modification of recursive inverse transform when 
$\mathcal{S}$ is a poset of W-glued diamond.
We found the same constructive technique applicable for a celebrated result of
Nachbin-Strassen theorem when a poset of W-glued diamond is considered,
and discuss it in Section~\ref{strassen}.

\subsection{Poset subclasses}
\label{poset.subclass}

Throughout the present investigation
we treat a Hasse diagram as an undirected graph,
and use a calligraphic letter $\mathcal{S}$ for a poset
and an italic letter $S$ for a Hasse diagram.
Here the dual of a poset is the poset equipped with the reversed ordering.
We refer a poset and its dual to the same Hasse diagram,
and assume that all the posets are connected
(i.e., the Hasse diagram is connected).
A poset is called \emph{non-acyclic} if the Hasse diagram has a cycle;
otherwise, it is called \emph{acyclic}.
An induced subposet of $\mathcal{S}$ is the poset on a subset $S'$
equipped with the ordering of $\mathcal{S}$ restricted on $S'$.
A non-acyclic poset is said to have an \emph{acyclic extension}
if it is an induced subposet of some acyclic poset.

\begin{figure}[h]
\begin{center}
\begin{tabular}{ccccccc}
    
\begin{tikzpicture}
  \node [mynode,label=above:$d$] (d) at (0,0) {};
  \node [mynode,label=right:$c$] (c) at (1,-1) {};
  \node [mynode,label=left:$b$] (b) at (-1,-1) {};
  \node [mynode,label=below:$a$] (a) at (0,-2) {};
  \draw
  (d) -- (c)
  (d) -- (b)
  (c) -- (a)
  (b) -- (a);
\end{tikzpicture}

& \hspace{0.1in} &

\begin{tikzpicture}
  \node [mynode,label=above:$g$] (g) at (0,0) {};
  \node [mynode,label=above:$h$] (h) at (1.6,0) {};
  \node [mynode,label=below:$e$] (e) at (0,-1.6) {};
  \node [mynode,label=below:$f$] (f) at (1.6,-1.6) {};
  \draw
  (g) -- (e)
  (g) -- (f)
  (h) -- (e)
  (h) -- (f);
\end{tikzpicture}

& \hspace{0.1in} &

\begin{tikzpicture}
  \node [mynode,label=above:$g$] (g) at (0,0) {};
  \node [mynode,label=above:$h$] (h) at (1.6,0) {};
  \node [mynode,label=right:$f$] (f) at (0.8,-1) {};
  \node [mynode,label=below:$e$] (e) at (0.8,-2) {};
  \draw
  (g) -- (f)
  (h) -- (f)
  (f) -- (e);
\end{tikzpicture}

& \hspace{0.1in} &

\begin{tikzpicture}
  \node [mynode,label=above:$f$] (f) at (0,0) {};
  \node [mynode,label=above:$g$] (g) at (1,0) {};
  \node [mynode,label=above:$h$] (h) at (2,0) {};
  \node [mynode,label=below:$e$] (e) at (1,-1.6) {};
  \draw
  (f) -- (e)
  (g) -- (e)
  (h) -- (e);
\end{tikzpicture}

\\
(i) Diamond
& & (ii) Bowtie
& & (iii) Y-poset
& & (iv) W-poset
\end{tabular}
\caption{Hasse diagrams of four-element poset}
\label{named.posets}
\end{center}
\end{figure}
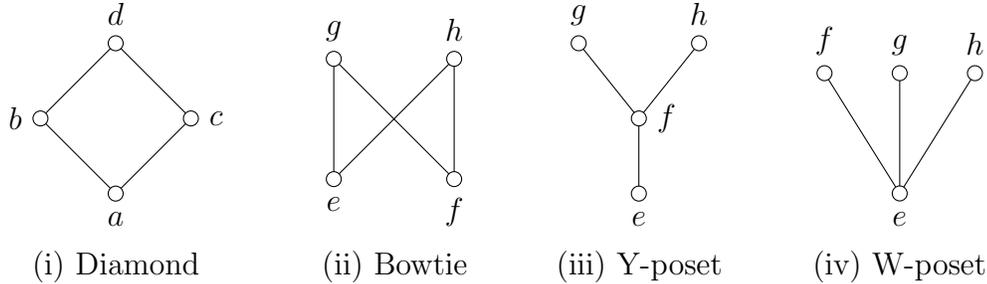

An introduction of poset subclasses
starts with Hasse diagrams of four-element poset in Figure~\ref{named.posets},
namely (i) diamond, (ii) bowtie, (iii) Y-poset, and (iv) W-poset.
A diamond and a bowtie are non-acyclic,
and the bowtie has an acyclic extension.
We say that a poset is in \emph{Y-class} if it is acyclic
and it has no bowtie as an induced subposet;
it is in \emph{W-class} if it is in Y-class
and it has neither Y-poset nor its dual as an induced subposet.
A Y-poset and a W-poset of Figure~\ref{named.posets}(iii) and (iv)
belong to Y-class, but the W-poset and its dual are in W-class.

\subsection{Monotonicity for continuous-time Markov processes}
\label{mc.monotone}

A continuous-time Markov process on $\mathcal{S}$
is characterized by the transition probability $P_t$, $t\ge 0$,
and it is obtained as
the unique solution to the Kolmogorov's forward equation
$\frac{d}{dt}P_t(x,y) = \sum_{z\in S} P_t(x,z) L(z,y)$.
The generator $L$ is determined by nonnegative values $L(x,y)$, $x\neq y$,
and satisfies $L(x,x) = -\sum_{y\in S\setminus\{x\}} L(x,y)$.
It can be further characterized by the underlying transition probability $Q_\lambda$ of jumps
(cf. \c{C}inlar~\cite{cinlar}).
Since $\mathcal{S}$ is finite,
we can set $\lambda_* = \max_{x \in S}[-L(x,x)]$
and assume $0 < \lambda_* < \infty$.
We fix $\lambda \ge \lambda_*$ arbitrarily,
and construct a transition probability
\begin{equation}\label{q.lambda}
  Q_\lambda(x,y) = I(x,y) + \dfrac{1}{\lambda} L(x,y),
  \quad x,y\in\mathcal{S},
\end{equation}
where $I(x,x) = 1$ and $I(x,y) = 0$ if $x \neq y$.
Let $N_t$ be a Poisson process with rate $\lambda$,
and let $\hat{X}_n$, $n=0,1,2,\ldots$, be a sample path
of discrete-time Markov chain generated by $Q_\lambda$.
Then the stochastic process
\begin{equation}\label{xi}
 X_t = \hat{X}_{N_t},
 \quad t \ge 0,
\end{equation}
is a sample path of continuous-time Markov process,
and satisfies
$P_t(x,y) = \mathbb{P}(X_t=y\,|\,X_0=x)$.

We call $P_t$ \emph{stochastically monotone}
if $P_t(x,\cdot) \preceq P_t(y,\cdot)$ for all $t \ge 0$
whenever $x \le y$ in $\mathcal{S}$.
In Lemma~\ref{sm.lem}
we write $L(x,U) = \sum_{z\in U} L(x,z)$ for a subset $U$ of $\mathcal{S}$.

\begin{lemma}\label{sm.lem}
Each of the following conditions is equivalent to the stochastic monotonicity
for $P_t$.  
\begin{enumerate}  
\renewcommand{\labelenumi}{(\alph{enumi})}
\item
Whenever $x \le y$ in $\mathcal{S}$
we have (i) $L(x,U) \le L(y,U)$ for each up-set $U$ satisfying $y \not\in U$,
and (ii) $L(x,U^c) \ge L(y,U^c)$ for each up-set $U$ satisfying $x \in U$.
\item
$Q_\lambda$ is stochastically monotone
for some $\lambda \ge \lambda_*$.
\end{enumerate}
\end{lemma}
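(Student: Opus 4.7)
The plan is to establish the equivalences as a cycle: (b) $\Rightarrow$ stochastic monotonicity of $P_t$ $\Rightarrow$ (a) $\Rightarrow$ (b). The algebraic identity $Q_\lambda = I + \lambda^{-1} L$ bridges the discrete-time formulation (b) and the infinitesimal formulation (a), while uniformization bridges (b) and the semigroup $P_t$.

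For (b) $\Rightarrow$ stochastic monotonicity of $P_t$, I would use the uniformization representation
\begin{equation*}
P_t(x, y) = e^{-\lambda t} \sum_{n=0}^{\infty} \frac{(\lambda t)^n}{n!}\, Q_\lambda^n(x, y),
\end{equation*}
obtained from $X_t = \hat{X}_{N_t}$ by conditioning on $N_t$. Stochastic monotonicity of $Q_\lambda$ is preserved under composition, so each $Q_\lambda^n$ is SM, and $P_t(x,\cdot)$ becomes a Poisson mixture (in $n$) of SM measures indexed by $x$, which is again SM in $x$.

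For SM of $P_t$ $\Rightarrow$ (a), I would differentiate at $t = 0$. Because $\lim_{t \downarrow 0} [P_t(x, U) - I(x, U)]/t = L(x, U)$ for every up-set $U$, the limit of the inequality $P_t(x, U) \le P_t(y, U)$ produces a finite (and therefore informative) inequality only when $I(x, U) = I(y, U)$, that is, when $x$ and $y$ lie on the same side of $U$. The case $y \notin U$ (forcing $x \notin U$ too) directly yields (a)(i); the case $x \in U$ (forcing $y \in U$) yields $L(x, U) \le L(y, U)$, which via the row-sum identity $L(z, \mathcal{S}) = 0$ rearranges to $L(x, U^c) \ge L(y, U^c)$, namely (a)(ii).

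For (a) $\Rightarrow$ (b), the decomposition
\begin{equation*}
Q_\lambda(y, U) - Q_\lambda(x, U) = [I(y, U) - I(x, U)] + \lambda^{-1}[L(y, U) - L(x, U)]
\end{equation*}
splits the analysis according to whether $x$ and $y$ lie on the same side of the up-set $U$. In the same-side case the indicator bracket vanishes and condition (a) directly gives nonnegativity. In the split case $x \notin U$, $y \in U$ the indicator bracket is $+1$, and the trivial bounds $L(x, U) \le -L(x, x) \le \lambda_*$ and $L(y, U) \ge L(y, y) \ge -\lambda_*$ give $L(x, U) - L(y, U) \le 2\lambda_*$, so any $\lambda \ge 2\lambda_* \ge \lambda_*$ makes $Q_\lambda$ stochastically monotone. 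The main piece of bookkeeping, and the only subtle point, is recognizing that differentiation at $t = 0$ only constrains up-sets on which $x$ and $y$ agree, and that this is precisely what the two clauses (a)(i) and (a)(ii) capture once the row-sum identity is used to fold the $x \in U$ case back into the complementary $U^c$.
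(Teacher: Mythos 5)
Your proof is correct, and it follows the same overall cycle as the paper --- stochastic monotonicity of $P_t$ $\Rightarrow$ (a) $\Rightarrow$ (b) $\Rightarrow$ stochastic monotonicity of $P_t$ --- with the (a) $\Rightarrow$ (b) step essentially identical (same split into the same-side and split cases, same bound with $\lambda \ge 2\lambda_*$, noting as you do that the row-sum identity converts (a)(ii) into $L(x,U)\le L(y,U)$ for $x\in U$). Where you diverge is in how the other two legs are handled. The paper simply cites Massey for the equivalence of (a) with stochastic monotonicity of $P_t$, whereas you re-derive the needed direction by differentiating $P_t(x,U)\le P_t(y,U)$ at $t=0$, which makes the lemma self-contained; your observation that only the ``same-side'' up-sets give informative limits, and that these are exactly the up-sets appearing in (a)(i)--(ii), is the right accounting. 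For (b) $\Rightarrow$ SM the paper runs a pathwise coupling: $Q_\lambda$ generates ordered jump chains $\hat{X}_n\le\hat{Y}_n$ from ordered initial states, driven by one Poisson clock via \eqref{xi} (this implicitly uses Strassen's theorem at each step), while you stay analytic, writing $P_t=e^{-\lambda t}\sum_n \frac{(\lambda t)^n}{n!}Q_\lambda^n$ and invoking closure of stochastic monotonicity under composition and under mixing. That closure is standard but does rest on the fact that up-set dominance $\mu\preceq\nu$ implies $\sum f\,d\mu\le\sum f\,d\nu$ for every increasing $f$ (write $f$ as a nonnegative combination of up-set indicators via its level sets); it would be worth one line to record this, since it plays the same hidden role in your argument that Strassen's theorem plays in the paper's coupling. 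Net effect: your version trades the probabilistic coupling and the external citation for a slightly longer but fully elementary analytic argument; both are sound.
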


\begin{proof}
The equivalent characterization (a) of stochastic monotonicity was obtained by
Massey~\cite{massey}.
Suppose that (b) holds.
Then if $x \le y$ in $\mathcal{S}$
then $Q_\lambda$ generates a pair $\hat{X}_n \le \hat{Y}_n$
of sample paths with initial state $\hat{X}_0 = x$ and $\hat{Y}_0 = y$.
The resulting pair of continuous-time Markov processes of \eqref{xi}
respectively with $\hat{X}_n$ and $\hat{Y}_n$
implies that $P_t(x,\cdot) \preceq P_t(y,\cdot)$,
and therefore, $P_t$ is stochastically monotone.

Suppose that (a) holds.
Then we set $\lambda = 2 \lambda_*$.
Let $x \le y$ and let $U$ be an up-set in $\mathcal{S}$.
If $x \in U$ or $y \not\in U$ then
we can easily verify
$Q_\lambda(x,U) \le Q_\lambda(y,U)$.
If $x \not\in U$ and $y \in U$
then
$$
  Q_\lambda(x,U) = \frac{1}{\lambda} L(x,U)
  \le \frac{1}{2} \le 1 + \frac{1}{\lambda} L(y,U)
  = Q_\lambda(y,U)
$$
Thus, $Q_\lambda$ is stochastically monotone.
\end{proof}

Let $\mathcal{M}$ be the space of monotone maps from $\mathcal{S}$ to itself
(i.e., $h\in\mathcal{M}$ implies $h(x) \le h(y)$ whenever $x \le y$
in $\mathcal{S}$).
By $\mathrm{id}\in\mathcal{M}$ we denote the identity map
[i.e., $\mathrm{id}(x)=x$ for all $x\in\mathcal{S}$].
If there exists a sample path $\xi_t$ of continuous-time Markov process
on the state space $\mathcal{M}$ starting from $\xi_0=\mathrm{id}$
and marginally distributed as $P_t$
[i.e., $\mathbb{P}(\xi_t(x) = y\,|\,\xi_0(x)=x) = P_t(x,y)$]
then $P_t$ is said to be \emph{realizably monotone}.

\begin{lemma}\label{rm.lem}
Each of the following conditions is equivalent to the realizable monotonicity
for $P_t$.  
\begin{enumerate}  
\renewcommand{\labelenumi}{(\alph{enumi})}
\item
There exists a nonnegative function $\gamma(h)$ on
$\tilde{\mathcal{M}}=\mathcal{M}\setminus\{\mathrm{id}\}$ such that
\begin{equation}\label{l.margin}
  L(x,y) = \sum_{h\in\tilde{\mathcal{M}}} \gamma(h) I(h(x),y)
\end{equation}
for all $x \neq y$.
\item
$Q_\lambda$ is realizably monotone
for some $\lambda \ge \lambda_*$.
\end{enumerate}  
\end{lemma}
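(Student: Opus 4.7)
The plan is to establish the triple equivalence by a cycle of implications: (a) $\Rightarrow$ (b), (b) $\Rightarrow$ realizable monotonicity of $P_t$, and realizable monotonicity of $P_t$ $\Rightarrow$ (a). The strategy parallels the proof of Lemma~\ref{sm.lem}: the algebraic identity $\lambda Q_\lambda = \lambda I + L$ implicit in \eqref{q.lambda} links (a) with (b), while the uniformization construction \eqref{xi} bridges the discrete and continuous notions of realizability.

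For (b) $\Rightarrow$ realizable monotonicity of $P_t$, I would take a random monotone map $\eta$ on $\mathcal{S}$ realizing $Q_\lambda$, i.i.d.\ copies $\eta_1, \eta_2, \ldots$, and an independent Poisson process $N_t$ of rate $\lambda$. Setting $\xi_t = \eta_{N_t} \circ \cdots \circ \eta_1$ with $\xi_0 = \mathrm{id}$ gives a process on $\mathcal{M}$, since $\mathcal{M}$ is closed under composition; evaluating at a fixed $x$ recovers the uniformized chain of \eqref{xi}, so $\xi_t(x)$ has law $P_t(x, \cdot)$.

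The most delicate step is realizable monotonicity of $P_t$ $\Rightarrow$ (a), in which a generator-level decomposition must be extracted from the abstract existence of $\xi_t$. Since $\mathcal{M}$ is finite, $\xi_t$ is a continuous-time Markov process with a bona fide generator $\tilde{L}$ on $\mathcal{M}$. I would set $\gamma(h) := \tilde{L}(\mathrm{id}, h) \ge 0$ for $h \in \tilde{\mathcal{M}}$. Differentiating
\[
\mathbb{P}(\xi_t(x) = y \mid \xi_0 = \mathrm{id}) = \sum_{g \in \mathcal{M}:\, g(x) = y} \mathbb{P}(\xi_t = g \mid \xi_0 = \mathrm{id})
\]
at $t = 0$, and noting that $\mathrm{id}(x) \ne y$ whenever $y \ne x$, yields $L(x,y) = \sum_{h \in \tilde{\mathcal{M}}} \gamma(h)\, I(h(x), y)$, which is precisely \eqref{l.margin}.

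For the closing implication (a) $\Rightarrow$ (b), I would put $\mu(h) := \gamma(h)/\lambda$ for $h \in \tilde{\mathcal{M}}$ and $\mu(\mathrm{id}) := 1 - \sum_{h \in \tilde{\mathcal{M}}} \gamma(h)/\lambda$. Summing \eqref{l.margin} over $y \ne x$ gives $-L(x,x) \le \sum_{h \in \tilde{\mathcal{M}}} \gamma(h)$ for every $x$, hence $\lambda_* \le \sum_{h \in \tilde{\mathcal{M}}} \gamma(h)$, so any $\lambda$ at least this large makes $\mu$ a probability measure on $\mathcal{M}$; the random map drawn from $\mu$ realizes $Q_\lambda$ because $\sum_h \mu(h)\, I(h(x), y) = I(x,y) + \lambda^{-1} L(x,y) = Q_\lambda(x,y)$. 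The main obstacle is the third step, which requires care in viewing $\xi_t$ as a Markov process on the finite state space $\mathcal{M}$ and in projecting its generator correctly onto evaluations; the other implications are essentially bookkeeping, parallel to Lemma~\ref{sm.lem}.
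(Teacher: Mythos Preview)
Your cycle (a) $\Rightarrow$ (b) $\Rightarrow$ realizable monotonicity $\Rightarrow$ (a) is correct and matches the paper's logic closely: the paper likewise proves (b) $\Rightarrow$ realizable monotonicity via uniformization and (a) $\Rightarrow$ (b) by normalizing $\gamma$ to a pmf realizing $Q_\lambda$ (taking $\lambda = \sum_{h\in\tilde{\mathcal{M}}}\gamma(h)$ exactly, whereas you allow mass on $\mathrm{id}$; the difference is cosmetic). The one substantive divergence is that the paper does not prove realizable monotonicity $\Rightarrow$ (a) at all but simply cites it as Proposition~2.1 of Dai~Pra~et~al., whereas you supply an argument by extracting $\gamma(h) = \tilde{L}(\mathrm{id},h)$ from the generator of $\xi_t$ on the finite state space $\mathcal{M}$ and differentiating the marginal identity at $t=0$; your argument is valid and makes the lemma self-contained, which the paper's version is not.
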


\begin{proof}
The equivalent characterization (a) of realizable monotonicity
was obtained by Proposition~2.1 of~\cite{pra}.
Suppose that (b) holds.
Then we can construct
a sample path $\hat{\xi}_n$ of discrete-time Markov chain
on the state space $\mathcal{M}$ starting from $\hat{\xi}_0=\mathrm{id}$
and marginally distributed as $Q_\lambda$
[i.e., $\mathbb{P}(\hat{\xi}_n(x) = y\,|\,\hat{\xi}_0(x)=x) = Q_\lambda^n(x,y)$].
Thus, $P_t$ must be realizably monotone via~(\ref{xi}).

Suppose that (a) holds.
Then we set $\lambda = \sum_{h\in\tilde{\mathcal{M}}} \gamma(h)$
and obtain $\lambda \ge \lambda_*$ so that $\gamma(\cdot)/\lambda$
becomes a pmf on $\tilde{\mathcal{M}}$.
By \eqref{l.margin}
we have $\sum_{h\in\tilde{\mathcal{M}}} (\gamma(h)/\lambda) I(h(x),y) = Q_\lambda(x,y)$
for $x \neq y$.
If $x = y$ then we find
\begin{equation*}
  \sum_{h\in\tilde{\mathcal{M}}} \frac{\gamma(h)}{\lambda} I(h(x),x)
  = 1 - \left(\frac{1}{\lambda}\right) \hspace{-0.1in}
   \sum_{h\in\tilde{\mathcal{M}}:\,h(x)\neq x}
   \hspace{-0.1in}\gamma(h)
  = 1 + \frac{1}{\lambda} L(x,x) = Q_\lambda(x,x)
\end{equation*}
Therefore, $Q_\lambda$ is realizably monotone,
and the pmf $\gamma(\cdot)/\lambda$
generates a desired random monotone map $\hat{\xi}$
[i.e., $\mathbb{P}(\hat{\xi} = h) = \gamma(h)/\lambda$
for $h\in\tilde{\mathcal{M}}$].
\end{proof}

In the rest of investigation we consider an induced subposet $\mathcal{A}$ of $\mathcal{S}$.
By $I_x$ we denote the unit mass at $x$
[i.e., $I_x(U) = 1$ if $x \in U$; otherwise, $I_x(U) = 0$],
and by $I=(I_\alpha:\alpha\in\mathcal{A})$ the system of unit masses.
Then we call a system $P=(P_\alpha: \alpha\in\mathcal{A})$ stochastically
(respectively realizably) \emph{weakly monotone}
if a linear combination
\begin{equation*}
  \theta P + (1-\theta) I
  = (\theta P_\alpha + (1-\theta) I_\alpha: \alpha\in\mathcal{A})
\end{equation*}
is stochastically (respectively realizably) monotone for some $\theta\in (0,1]$.
We say that weak monotonicity equivalence holds
for a pair $(\mathcal{A},\mathcal{S})$ of posets
if every stochastically weakly monotone system $P$
is realizably weakly monotone for the pair.

In particular when $\mathcal{A}=\mathcal{S}$
we can observe for $\lambda\ge\lambda_0\ge\lambda_*$ that
the transition probability $Q_\lambda$ of \eqref{q.lambda} can be viewed as
the linear combination $\theta Q_{\lambda_0} + (1-\theta) I$
with $\theta=\lambda_0/\lambda$.
Hence, Lemma~\ref{sm.lem} and~\ref{rm.lem} has established
the following characterization of monotonicity equivalence
for continuous-time Markov processes
(Proposition~2.7 of~\cite{pra}).

\begin{proposition}\label{weak.equivalence}
Monotonicity equivalence holds for a continuous-time Markov process
on $\mathcal{S}$
if and only if
weak monotonicity equivalence holds for a discrete-time Markov chain
on $\mathcal{S}$.
\end{proposition}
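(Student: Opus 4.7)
\medskip\par\noindent
\textbf{Proof proposal.}
The plan is to combine the two preceding lemmas with the observation that the family $\{Q_\lambda\}$ is, up to rescaling, just a one-parameter family of convex combinations of a single transition kernel with the identity kernel~$I$. Concretely, for $\lambda\ge\lambda_0\ge\lambda_*$ the formula in~\eqref{q.lambda} yields
\begin{equation*}
  Q_\lambda \;=\; \frac{\lambda_0}{\lambda}\,Q_{\lambda_0} \;+\; \Bigl(1-\frac{\lambda_0}{\lambda}\Bigr) I ,
\end{equation*}
so weak monotonicity of the system $Q_{\lambda_0}=(Q_{\lambda_0}(x,\cdot):x\in\mathcal{S})$ is literally the same statement as (stochastic or realizable) monotonicity of $Q_\lambda$ for some $\lambda\ge\lambda_0$. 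This will be the bridge between the two notions.

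For the forward direction, assume weak monotonicity equivalence for discrete-time Markov chains on $\mathcal{S}$, and let $P_t$ be stochastically monotone. By Lemma~\ref{sm.lem}(b) there exists $\lambda_0\ge\lambda_*$ with $Q_{\lambda_0}$ stochastically monotone; this is, \emph{a fortiori}, stochastically weakly monotone (take $\theta=1$). The weak-equivalence hypothesis then makes $\theta\,Q_{\lambda_0}+(1-\theta)I$ realizably monotone for some $\theta\in(0,1]$; but by the identity above this transition matrix is exactly $Q_{\lambda_0/\theta}$, so Lemma~\ref{rm.lem}(b) gives realizable monotonicity of $P_t$.

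For the converse, assume continuous-time monotonicity equivalence on $\mathcal{S}$, and let $P=(P_\alpha)$ be a stochastically weakly monotone system with $\mathcal{A}=\mathcal{S}$: pick $\theta\in(0,1]$ so that $Q:=\theta P+(1-\theta)I$ is stochastically monotone. Build a generator by $L:=Q-I$; then $\lambda_*\le 1$ and $Q_1=Q$, so by Lemma~\ref{sm.lem}(b) the associated $P_t$ is stochastically monotone, hence realizably monotone by the hypothesis. By Lemma~\ref{rm.lem}(b), $Q_{\lambda'}$ is realizably monotone for some $\lambda'\ge\lambda_*$. A small bookkeeping step (replacing $\lambda'$ by $\max(\lambda',1)$, which only convex-combines $Q_{\lambda'}$ with $I$ and so preserves realizable monotonicity) lets me assume $\lambda'\ge 1$. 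Rewriting
\begin{equation*}
  Q_{\lambda'} \;=\; \tfrac{1}{\lambda'}Q + \bigl(1-\tfrac{1}{\lambda'}\bigr)I
  \;=\; \tfrac{\theta}{\lambda'}\,P + \Bigl(1-\tfrac{\theta}{\lambda'}\Bigr) I
\end{equation*}
exhibits $P$ as realizably weakly monotone with parameter $\theta/\lambda'\in(0,1]$.

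The argument is essentially a translation exercise, so there is no real obstacle beyond the bookkeeping just mentioned: I must be careful that the $\lambda$ at which I invoke each lemma is at least $\lambda_*$ and at least $1$ (so that the ``mixing'' coefficient $1/\lambda'$ is a genuine probability). The fact that realizable and stochastic monotonicity are both preserved when $Q_{\lambda'}$ is replaced by any convex combination with $I$ is what makes this harmless.
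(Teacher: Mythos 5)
Your argument is correct and follows essentially the same route as the paper: the key identity $Q_\lambda=\tfrac{\lambda_0}{\lambda}Q_{\lambda_0}+(1-\tfrac{\lambda_0}{\lambda})I$ together with Lemmas~\ref{sm.lem} and~\ref{rm.lem} is exactly how the paper (tersely) establishes Proposition~\ref{weak.equivalence}. You merely spell out the details the paper leaves implicit, including the generator $L=Q-I$ in the converse and the harmless adjustment $\lambda'\mapsto\max(\lambda',1)$.
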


Since the identity transition probability $I$ is trivially realizably monotone,
a transition probability is stochastically (or realizably) weakly monotone
if it is stochastically (or realizably) monotone.
Therefore, Proposition~\ref{fm.claim}
immediately indicates monotonicity equivalence for
the case of Proposition~\ref{main.claim}(i).

\section{A subclass of posets with acyclic extension}
\label{acyclic.extension.section}
\setcounter{equation}{0}\setcounter{figure}{0}

In this section we consider
a subclass of non-acyclic (and connected) posets having acyclic extension.
If a pair $(x,y)$ is an edge of the Hasse diagram $S$ and $x < y$ in $\mathcal{S}$
then $y$ is said to \emph{cover} $x$.
A non-acyclic poset $\mathcal{A}$ of Figure~\ref{bipartite}(i) is called
$(m,n)$-bipartite
if the Hasse diagram $A$ is complete bipartite with $m,n\ge 2$.
Starting from a poset $\mathcal{S}$ of this subclass
we can construct a pair $\mathcal{S}_1$ and $\mathcal{S}_2$ of posets in
Algorithm~\ref{ext.alg}.

\begin{algorithm}\label{ext.alg}
Since the Hasse diagram $S$ contains a bowtie of Figure~\ref{named.posets}(ii)
(Proposition~5.11 of~\cite{fm2001}) as a subgraph of $S$,
we can find a maximal bipartite subgraph $A$ of $S$
which induces an $(m,n)$-bipartite poset $\mathcal{A}$ of Figure~\ref{bipartite}(i).
By removing all the edges $(a_i,b_j)$ in $A$
and inserting a new vertex $c$ along with the edges $(a_i,c)$ and $(c,b_j)$,
we create an extension $\hat{\mathcal{S}}$
where $b_j$ covers $c$ and $c$ covers $a_i$.
We can view $\mathcal{S}$ as the induced subposet of $\hat{\mathcal{S}}$.
We can also construct
the induced subposet $\mathcal{S}_1$ consisting of vertices
equal to $c$ or connected to $c$ via some $a_i$ in $\hat{S}$,
and the induced subposet $\mathcal{S}_2$ consisting of vertices
equal to $c$ or connected to $c$ via some $b_j$ in $\hat{S}$.
\end{algorithm}

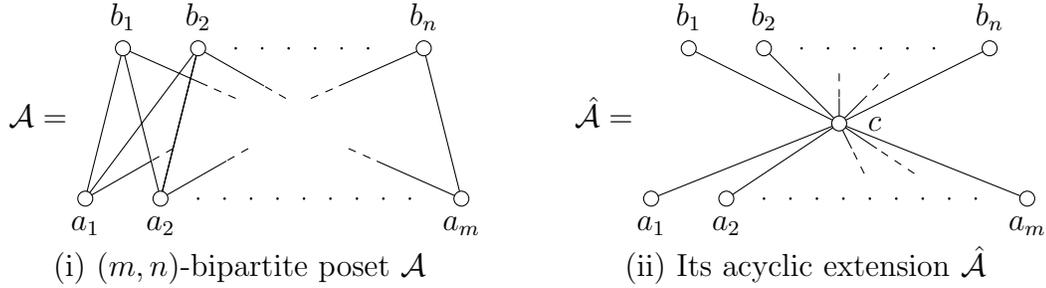
\begin{figure}[h]
\begin{center}
\begin{tabular}{ccc}

$\mathcal{A} =$
\hspace{-2ex}  
\begin{minipage}{30ex}  
\begin{tikzpicture}
  \tikzset{circle dotted/.style={line cap=round, line width=1pt, dash pattern=on 0pt off 10pt}}
  \node [mynode,label=above:$b_1$] (b1) at (0,0) {};
  \node [mynode,label=above:$b_2$] (b2) at (1,0) {};
  \node [mynode,label=above:$b_n$] (bn) at (4,0) {};
  \node [mynode,label=below:$a_1$] (a1) at (-0.5,-2) {};
  \node [mynode,label=below:$a_2$] (a2) at (0.5,-2) {};
  \node [mynode,label=below:$a_m$] (am) at (4.5,-2) {};
  \draw
  (b1) -- (a1)
  (b1) -- (a2)
  (b2) -- (a1)
  (b2) -- (a2)
  (b2) -- (a2)
  (bn) -- (am)
  (b1) -- (4.5/4,-2/4)
  (b2) -- (1+3.5/4,-2/4)
  (bn) -- (4-4.5/4,-2/4)
  (a1) -- (-0.5+3.5/4,-2+2/4)
  (a2) -- (0.5+3.5/4,-2+2/4)
  (am) -- (4.5-4.5/4,-2+2/4);
  \draw [dashed]
  (4.5/3,-2/3) -- (4.5/4,-2/4)
  (1+3.5/3,-2/3) -- (1+3.5/4,-2/4)
  (4-4.5/3,-2/3) -- (4-4.5/4,-2/4)
  (-0.5+3.5/3,-2+2/3) -- (-0.5+3.5/4,-2+2/4)
  (0.5+3.5/3,-2+2/3) -- (0.5+3.5/4,-2+2/4)
  (4.5-4.5/3,-2+2/3) -- (4.5-4.5/4,-2+2/4);
  \draw [circle dotted]
  (1.5,0) -- (3.5,0)
  (1.0,-2) -- (4.0,-2);
\end{tikzpicture}
\end{minipage}

& \hspace{3ex} &

$\hat{\mathcal{A}} =$
\hspace{-2ex}  
\begin{minipage}{30ex}  
\begin{tikzpicture}
  \tikzset{circle dotted/.style={line cap=round, line width=1pt, dash pattern=on 0pt off 10pt}}
  \node [mynode,label=above:$b_1$] (b1) at (0,0) {};
  \node [mynode,label=above:$b_2$] (b2) at (1,0) {};
  \node [mynode,label=above:$b_n$] (bn) at (4,0) {};
  \node [mynode,label=below:$a_1$] (a1) at (-0.5,-2) {};
  \node [mynode,label=below:$a_2$] (a2) at (0.5,-2) {};
  \node [mynode,label=below:$a_m$] (am) at (4.5,-2) {};
  \node [mynode,label=right:$\ c$] (c) at (2,-1) {};
  \draw
  (b1) -- (c)
  (b2) -- (c)
  (bn) -- (c)
  (a1) -- (c)
  (a2) -- (c)
  (am) -- (c)
  (c) -- (2+1/3,-1+1/3)
  (c) -- (2+0/3,-1+1/3)
  (c) -- (2+1.5/3,-1-1/3)
  (c) -- (2+0.5/3,-1-1/3);
  \draw [dashed]
  (2+1/1.5,-1+1/1.5) -- (2+1/3,-1+1/3)
  (2+0/1.5,-1+1/1.5) -- (2+0/3,-1+1/3)
  (2+1.5/1.5,-1-1/1.5) -- (2+1.5/3,-1-1/3)
  (2+0.5/1.5,-1-1/1.5) -- (2+0.5/3,-1-1/3);
  \draw [circle dotted]
  (1.5,0) -- (3.5,0)
  (1.0,-2) -- (4.0,-2);
\end{tikzpicture}
\end{minipage}
\\
(i) $(m,n)$-bipartite poset $\mathcal{A}$
& & (ii) Its acyclic extension $\hat{\mathcal{A}}$
\end{tabular}
\caption{Bipartite poset and its acyclic extension}
\label{bipartite}
\end{center}
\end{figure}

When two posets $\mathcal{S}_1$ and $\mathcal{S}_2$
share a common vertex $\{c\} = S_1\cap S_2$,
they can be glued at $c$.
The resulting new poset $\mathcal{S}$ has
the Hasse diagram $S$ with all the vertices and edges from $S_1$ and $S_2$
in which the common vertex $c$ is shared.
In Algorithm~\ref{ext.alg}
we can view $\hat{\mathcal{S}}$ as the poset obtained from
$\mathcal{S}_1$ and $\mathcal{S}_2$ glued at $c$.
We can also observe that
(a)
the induced subposet $\hat{\mathcal{A}}$ of $\hat{\mathcal{S}}$
on $A\cup\{c\}$ becomes the acyclic extension of $\mathcal{A}$
as in Figure~\ref{bipartite}(ii),
and (b)
both $\mathcal{S}_1$ and $\mathcal{S}_2$ have an acyclic extension
(Lemma~5.12 of~\cite{fm2001}).

\begin{lemma}\label{y.prop.necessity}
Let $(\mathcal{S}_1, \mathcal{S}_2)$ be a pair of posets
constructed by Algorithm~\ref{ext.alg}.
Then weak monotonicity equivalence fails for a discrete-time Markov chain on
$\mathcal{S}$
if either $\mathcal{S}_1$ or $\mathcal{S}_2$ is not in Y-class.
\end{lemma}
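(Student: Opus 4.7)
The plan is to prove the contrapositive by constructing a system $P=(P_\alpha:\alpha\in\mathcal{S})$ of probability measures on $\mathcal{S}$ that is stochastically weakly monotone but not realizably weakly monotone, assuming without loss of generality that $\mathcal{S}_1$ is not in Y-class. Failure of Y-class for $\mathcal{S}_1$ means that $\mathcal{S}_1$ is non-acyclic or contains an induced bowtie; in either regime, one can appeal to the characterizations of (weak) monotonicity equivalence developed in~\cite{fm2001,fm2002,classw} to produce a stochastically weakly monotone system $\tilde P=(\tilde P_\sigma:\sigma\in\mathcal{S}_1)$ on $\mathcal{S}_1$ that is not realizably weakly monotone. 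This off-the-shelf counterexample on $\mathcal{S}_1$ is the starting ingredient.

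The next step is to transport $\tilde P$ from $\mathcal{S}_1$ (which is an induced subposet of $\hat{\mathcal{S}}$ rather than of $\mathcal{S}$) back to the original poset $\mathcal{S}$. Removing $c$ one sees that $\mathcal{S}_1\setminus\{c\}$ coincides, with the induced order, with an induced subposet $\mathcal{T}_1\subset\mathcal{S}$ consisting of the $a_i$'s together with every vertex lying below some $a_i$. Inside $\hat{\mathcal{S}}$ the element $c$ plays the role of a common cover of the $a_i$'s; inside $\mathcal{S}$ each $b_j$ plays that same role, so after fixing one $b_{j_0}\in A$ I define $P$ on $\mathcal{S}$ by setting $P_\alpha=I_\alpha$ for $\alpha\in\mathcal{S}\setminus\mathcal{T}_1$ and, for $\alpha\in\mathcal{T}_1$, by replacing any mass that $\tilde P_\alpha$ places on $c$ by the same mass placed on $b_{j_0}$, leaving all other mass unchanged. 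Stochastic weak monotonicity of $P$ on $\mathcal{S}$ should then follow by applying Lemma~\ref{sm.lem} to $\theta P+(1-\theta)I$ for $\theta>0$ small enough, since the comparabilities invoked by the identification $c\mapsto b_{j_0}$ are exactly those $b_{j_0}>a_i$ that already hold in $\mathcal{S}$.

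The main obstacle is to show that realizable weak monotonicity fails for this $P$ on $\mathcal{S}$. The strategy is a restriction argument: if some random monotone map $\hat\xi:\mathcal{S}\to\mathcal{S}$ realizes $\theta P+(1-\theta)I$, then the vertices in $\mathcal{S}\setminus\mathcal{T}_1\setminus\{b_{j_0}\}$ are pinned by the identity component of $P_\alpha$, so $\hat\xi$ restricts to a random map on $\mathcal{T}_1\cup\{b_{j_0}\}$, and collapsing $b_{j_0}\mapsto c$ should yield a random monotone map on $\mathcal{S}_1$ realizing the corresponding $\theta$-averaged version of $\tilde P$, contradicting the choice of $\tilde P$. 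Two delicate points must be controlled: (i) the collapse must preserve monotonicity, which is where the maximality of the bipartite subgraph $A$ guaranteed by Algorithm~\ref{ext.alg} is essential, since maximality rules out stray comparabilities involving $b_{j_0}$ or between $\mathcal{T}_1$ and $\mathcal{S}\setminus\mathcal{T}_1$ that would spoil the identification; and (ii) pinning $P_\alpha=I_\alpha$ outside $\mathcal{T}_1$ prevents $\hat\xi$ from exploiting realizability freedom unavailable inside $\mathcal{S}_1$. Carrying out this bookkeeping, by tracking precisely which cover relations are removed and introduced by Algorithm~\ref{ext.alg} and how they interact with the support of $\tilde P$, is where the bulk of the work will lie.
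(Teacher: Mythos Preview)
Your reduction strategy has a genuine gap in the very first step. You claim that when $\mathcal{S}_1$ is not in Y-class there is an ``off-the-shelf'' stochastically weakly monotone system $\tilde P$ on $\mathcal{S}_1$ that is not realizably weakly monotone. But $\mathcal{S}_1$ not being in Y-class means only that it is non-acyclic \emph{or} contains an induced bowtie, and the paper explicitly notes that $\mathcal{S}_1$ can be acyclic (it always has an acyclic extension by the remark after Algorithm~\ref{ext.alg}). If $\mathcal{S}_1$ is acyclic, then by Proposition~\ref{fm.claim} monotonicity equivalence holds on $\mathcal{S}_1$, hence weak monotonicity equivalence holds as well, and no counterexample $\tilde P$ exists on $\mathcal{S}_1$ at all. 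Your transport-and-restrict scheme therefore has nothing to transport in precisely the case where the lemma is most delicate.

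The paper's proof takes a fundamentally different route: it does not work inside $\mathcal{S}_1$ alone but exploits the interaction between \emph{two} bowties in $\mathcal{S}$. One is $A'=\{a_1,a_2,b_1,b_2\}$, extracted from the bipartite subgraph $A$; the other is $B=\{e,f,g,h\}$, whose existence is forced by $\mathcal{S}_1$ failing Y-class (with a careful case analysis on where $h$ can sit relative to $S_1$ and $S_2$, using the maximality of $A$). The explicit counterexample is a transition probability on $\mathcal{S}$ whose values on $A'$ are supported on $B$, and the contradiction comes from a direct probability computation showing $\mathbb{P}(\xi_{a_1}=f)\ge 2\theta/3$ against the required $\theta/3$. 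The key idea you are missing is that the obstruction lives in the coupling between the bipartite cycle of $\mathcal{S}$ and the bowtie witnessed by $\mathcal{S}_1$, not in $\mathcal{S}_1$ by itself.
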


\begin{proof}
Suppose that $\mathcal{S}_1$ is not in Y-class.
Even though $\mathcal{S}_1$ could be acyclic,
it contains an induced bowtie poset $\mathcal{B}$ of $\mathcal{S}$
labeled with $B=\{e,f,g,h\}$ as in Figure~\ref{named.posets}(ii)
satisfying $e,f,g \in S_1\setminus\{c\}$.
We can choose it so that $h\in S_2\setminus\{c\}$ whenever possible.
If not, the induced subgraph of $\hat{S}$ on $S_1\setminus\{c\}$ becomes a forest
consisting of $m$ trees; otherwise, there exists a pair $a_i$ and $a_j$ which are connected
in $S_1\setminus\{c\}$, implying an induced bowtie of $\mathcal{S}_1$ with $h=c$.
Consequently, the bowtie $B$ is contained in one of the trees,
say, connected to $a_2$,
and $a_1$ is incomparable with $e,f,g,h$.
If $h\in S_2\setminus\{c\}$ then we can
stipulate that $h = b_2$.
Then we can observe that $g$ cannot be less than or equal to any of $a_i$'s because $g$ and
$h = b_2$ are incomparable.
If $g$ is greater than all of $a_i$'s in $\mathcal{S}$
then we must find a bipartite subgraph strictly larger than $A$.
Since the maximal $A$ has been chosen among bipartite subgraphs,
we can assume that $g$ is not comparable with $a_1$.
As a result, $a_1$ cannot be less than or equal to $e$ nor $f$.
Let $A' = \{a_1,a_2,b_1,b_2\}$ be a bowtie of $S$,
and let
\begin{math}
  S' = \{x \in S\setminus A':
  \mbox{$x \ge a_1$ or $x \ge a_2$ in $\mathcal{S}$}
  \}
\end{math}
be an up-set of $\mathcal{S}$.
Here we can construct a stochastically monotone transition probability
$(P_x: x\in\mathcal{S})$ of discrete-time Markov chain by setting
\begin{equation*}
  P_{a_1} := \frac{1}{3} I_{\{e,f,h\}}; \quad
  P_{a_2} := \frac{1}{3} I_{\{e,f,g\}}; \quad
  P_{b_1} := \frac{1}{3} I_{\{f,g,h\}}; \quad
  P_{b_2} := \frac{1}{3} I_{\{e,g,h\}};
\end{equation*}
and
\begin{equation*}
P_x = \begin{cases}
  \frac{1}{2}I_{\{g,h\}} & \mbox{ if $x \in S'$; } \\
  \frac{1}{2}I_{\{e,f\}} & \mbox{ if $x \not\in S'\cup A'$, }
\end{cases}
\end{equation*}
where $I_U$ is the indicator function on a subset~$U$
[i.e., $I_U(x) = 1$ if $x\in U$; otherwise, $I_U(x) = 0$].

Suppose that a random monotone map $\xi_x$
realizes $\tilde{P}_x = \theta P_x + (1-\theta)I_x$, $x\in S$,
for some $\theta>0$.
Since $b_2\neq e$ and $a_1\not\le e$, we obtain $\mathbb{P}(\xi_{b_2} = e,\,\xi_{a_1} = e) = \theta/3$.
Note that $\tilde{P}_{a_2}(g)$ could become $(1-(2/3)\theta)$ if $h\not\in S_2\setminus\{c\}$,
and that $\mathbb{P}(\xi_{a_2} = g,\,\xi_{b_2} = g) = \theta/3$ in either of
the cases for $h$.
Since $a_1\neq g$ and
the event $\{\xi_{a_1}=e\}$ cannot happen when $\xi_{b_2}=g$,
we conclude
$\mathbb{P}(\xi_{a_2} = g,\,\xi_{b_2} = g,\,\xi_{a_1} = f) = \theta/3$.
Since $b_1\neq f$ and $a_1\not\le f$,
we have
$\mathbb{P}(\xi_{b_1} = f,\,\xi_{a_1} = f,\,\xi_{a_2} \le f) = \theta/3$.
Since $\{\xi_{a_2}=g\}$ and $\{\xi_{a_2}\le f\}$ are disjoint,
we obtain
$$
\mathbb{P}(\xi_{a_1} = f)
\ge \mathbb{P}(\xi_{a_2} = g,\,\xi_{b_2} = g,\,\xi_{a_1} = f)
+ \mathbb{P}(\xi_{b_1} = f,\,\xi_{a_1} = f,\,\xi_{a_2} \le f)
= \frac{2}{3}\theta,
$$
which contradict that
$\tilde{P}_{a_1}(f) = \theta/3$.
Therefore, the transition probability $(P_x: x\in\mathcal{S})$ cannot be realizably weakly monotone.
The case that $\mathcal{S}_2$ is not in Y-class
can be dually handled.
\end{proof}

Lemma~\ref{y.prop.necessity} implies
the necessity of Definition~\ref{y-glued.bipartite}
for a poset to attain weak monotonicity equivalence.
In the rest of this section we present
the proof of sufficiency for Proposition~\ref{y.prop},
and consequently obtain monotonicity equivalence for Proposition~\ref{main.claim}(ii).

\begin{definition}\label{y-glued.bipartite}
We call $\mathcal{S}$ a \emph{Y-glued bipartite}
if (a) it is non-acyclic, having an acyclic extension
(as assumed throughout this section),
(b) it has a unique bipartite subposet $\mathcal{A}$ of
Algorithm~\ref{ext.alg},
and (c) the posets $\mathcal{S}_1$ and $\mathcal{S}_2$ constructed by 
Algorithm~\ref{ext.alg} are in Y-class.
\end{definition}

\begin{proposition}\label{y.prop}
Suppose that $\mathcal{S}$ is non-acyclic, and that it has an acyclic extension.
Then monotone equivalence holds for a continuous-time Markov process
if and only if it is a Y-glued bipartite poset.
\end{proposition}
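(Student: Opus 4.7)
By Proposition~\ref{weak.equivalence}, the assertion is equivalent to the claim that weak monotonicity equivalence for a discrete-time Markov chain on $\mathcal{S}$ holds if and only if $\mathcal{S}$ is a Y-glued bipartite poset, and I work with this discrete reformulation throughout.

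For the necessity direction, I argue the contrapositive: if $\mathcal{S}$ is non-acyclic with an acyclic extension but fails to be a Y-glued bipartite poset, then one of conditions~(b) or~(c) of Definition~\ref{y-glued.bipartite} must fail. If condition~(c) fails, so that some decomposition $(\mathcal{S}_1,\mathcal{S}_2)$ produced by Algorithm~\ref{ext.alg} has $\mathcal{S}_k$ outside Y-class, then Lemma~\ref{y.prop.necessity} directly exhibits a stochastically weakly monotone system on $\mathcal{S}$ that is not realizably weakly monotone. If only condition~(b) fails, so that two distinct maximal bipartite subgraphs $A, A'$ both exist while every resulting decomposition is in Y-class, I would analyze the combinatorial configuration of $A$ and $A'$ to either locate an induced bowtie in some $\mathcal{S}_k$ arising from one of the two choices (reducing to the previous case) or to mimic the construction of Lemma~\ref{y.prop.necessity} using the vertices jointly covered by $A$ and $A'$.

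For the sufficiency direction, suppose $\mathcal{S}$ is Y-glued bipartite and $P$ is stochastically weakly monotone, with $\tilde{P}_x = \theta P_x + (1-\theta) I_x$ stochastically monotone for some $\theta \in (0,1]$. I plan to construct a random monotone map $\xi: \mathcal{S} \to \mathcal{S}$ realizing $\tilde{P}$ in two stages. In Stage~1, I produce a joint distribution of the boundary values $(\xi(a_i),\xi(b_j))_{i,j}$ on $\mathcal{A}$, with the correct marginals $\tilde{P}_{a_i},\tilde{P}_{b_j}$ and satisfying $\xi(a_i)\le\xi(b_j)$ almost surely in $\mathcal{S}$. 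Existence of such a coupling relies on the pairwise comparabilities $\tilde{P}_{a_i}\preceq\tilde{P}_{b_j}$ inherited from stochastic monotonicity and on a Strassen-type coupling argument for bipartite families on a finite poset. In Stage~2, conditional on these boundary values, I extend $\xi$ independently to $\mathcal{S}_k\setminus\mathcal{A}$ for $k=1,2$, filling in values with the correct marginals $\tilde{P}_x$ for $x\in\mathcal{S}_k\setminus\mathcal{A}$ while preserving monotonicity within each $\mathcal{S}_k$. The existence of each such conditional extension is the content of a conditional realization theorem on a Y-class poset, built recursively along the Hasse diagram in the spirit of the recursive inverse transform of~\cite{classw} briefly overviewed in Section~\ref{rit}.

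The heart of the argument lies in Stage~2: ensuring that the conditional realization on $\mathcal{S}_k$ can accept the prescribed boundary values from Stage~1 while delivering the correct marginals on the rest of $\mathcal{S}_k$. The Y-class hypothesis is indispensable here---acyclicity supports the recursive element-by-element construction along $\mathcal{S}_k$, and the absence of an induced bowtie rules out precisely the kind of local obstruction responsible for the failure exhibited in Lemma~\ref{y.prop.necessity}. Coordinating the two stages, so that the Stage~1 boundary coupling is guaranteed to be extendable in Stage~2, is the subtle point and will require care in how the Strassen coupling of Stage~1 is arranged.
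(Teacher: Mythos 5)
Your necessity argument is essentially the paper's: the counterexample of Lemma~\ref{y.prop.necessity} covers the failure of condition~(c) of Definition~\ref{y-glued.bipartite}, and the non-uniqueness case you leave as a sketch is the same reduction the paper makes implicitly (a second bipartite subgraph forces an induced bowtie into $\mathcal{S}_1$ or $\mathcal{S}_2$). The problem is your sufficiency argument, and specifically Stage~1. You assert that a joint monotone coupling of the boundary values $(\xi(a_i),\xi(b_j))_{i,j}$ with the prescribed marginals exists ``from the pairwise comparabilities $\tilde{P}_{a_i}\preceq\tilde{P}_{b_j}$ and a Strassen-type coupling argument for bipartite families.'' That claim is false in general: the $(m,n)$-bipartite index poset with $m,n\ge 2$ is non-acyclic, and by Proposition~\ref{fm.claim} (and the explicit counterexamples behind it, and behind Lemma~\ref{y.prop.necessity} itself) pairwise stochastic comparability does \emph{not} yield a simultaneous monotone realization for such a family. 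The existence of this boundary coupling is precisely the theorem being proved, so invoking a generic Strassen-type argument there is circular; the Y-glued structure of the \emph{target} poset must enter at this very step, and your proposal never says how.

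The paper's mechanism, which is absent from your plan, is to pass to the acyclic extension $\hat{\mathcal{S}}$ produced by Algorithm~\ref{ext.alg} and to manufacture a single measure $\hat{P}_c$ at the inserted vertex $c$ sandwiched between all the $\hat{P}_{a_i}$ and all the $\hat{P}_{b_j}$: Theorem~\ref{y.extension} supplies such an intermediate measure on each Y-class half $\mathcal{S}_1$, $\mathcal{S}_2$ separately (this is where the Y-class hypothesis on the halves is actually used, not in your Stage~2), and Lemma~\ref{y.lemma} glues the two pieces into one measure on $\hat{\mathcal{S}}$, rescaling by $\theta$ because the combined mass $p_c$ may exceed $1$ --- this is exactly where the weak-monotonicity slack is consumed, something your proposal never accounts for. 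Once the extended system indexed by the acyclic poset $\hat{\mathcal{A}}$ (and then $\hat{\mathcal{S}}$) is stochastically monotone, realizable monotonicity follows at once from the acyclic case, and restricting to $\mathcal{S}$ finishes the proof; no separate conditional-extension stage is needed, since the full system $(\theta P_x+(1-\theta)I_x:x\in\mathcal{S})$ is handled in one application of the acyclic result. In short, your Stage~2 concerns a part of the problem that is not the obstruction, while Stage~1 --- the actual heart of the matter --- is asserted rather than proved, and the idea that makes it true (the intermediate measure at $c$ built from Theorem~\ref{y.extension} on the two Y-class halves, with the $\theta$-rescaling) is missing.
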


\subsection{The proof of sufficiency}

In order to prove the sufficiency of Definition~\ref{y-glued.bipartite}
for weak monotonicity equivalence
we need the following result of~\cite{fm2001}.

\begin{theorem}\label{y.extension}
{\rm(Section~5 of~\cite{fm2001})}
Let $\mathcal{S}$ be a poset of Y-class.
Suppose that $\mathcal{A}$ is a bipartite poset of Figure~\ref{bipartite}(i)
and that $\hat{\mathcal{A}}$ is its extension of Figure~\ref{bipartite}(ii).
Then a stochastically monotone system $(P_\alpha: \alpha\in\mathcal{A})$
of probability measures on $\mathcal{S}$
can be extended to a stochastically monotone system
$(P_\alpha: \alpha\in \hat{\mathcal{A}})$ with $P_c$ satisfying
$P_{a_i} \preceq P_c \preceq P_{b_j}$
for $i=1,\ldots,m$ and $j=1,\ldots,n$.
\end{theorem}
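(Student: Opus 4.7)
My plan is to construct $P_c$ as the marginal law of an intermediate random variable in a joint coupling of the entire bipartite system $(P_\alpha:\alpha\in\mathcal{A})$. The task is equivalent to producing a probability measure $P_c$ on $\mathcal{S}$ satisfying
\[
  \max_{1\le i\le m} P_{a_i}(U) \;\le\; P_c(U) \;\le\; \min_{1\le j\le n} P_{b_j}(U)
\]
for every up-set $U\subseteq\mathcal{S}$.

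The first step is to upgrade pointwise stochastic dominance to a simultaneous coupling: random variables $\xi_{a_i}\sim P_{a_i}$ and $\xi_{b_j}\sim P_{b_j}$, defined on a common probability space, satisfying $\xi_{a_i}(\omega)\le\xi_{b_j}(\omega)$ in $\mathcal{S}$ for every $\omega$ and every pair $(i,j)$. This is the realizable monotonicity of the bipartite system into the Y-class (hence acyclic) target $\mathcal{S}$. To establish it, I would adapt the tree-induction argument that underlies Proposition~\ref{fm.claim} of~\cite{fm2001}: traverse the forest Hasse diagram of $\mathcal{S}$ from its leaves inward, at each stage applying Strassen's theorem to an individual pair $(P_{a_i},P_{b_j})$ and coherently splicing the pieces using the unique-upward-path property of a forest.

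Granted the joint coupling, I would then define
\[
  \xi_c(\omega) \;:=\; \bigvee_{i=1}^m \xi_{a_i}(\omega) \quad\text{in } \mathcal{S}.
\]
This join exists for every $\omega$ because each $\xi_{b_j}(\omega)$ is a common upper bound of $\{\xi_{a_i}(\omega)\}_i$, and in an acyclic poset any finite subset admitting a common upper bound has a unique least upper bound---the lowest vertex at which the monotone Hasse paths from the $\xi_{a_i}(\omega)$'s first meet on the way up. By construction $\xi_{a_i}\le\xi_c$, and since $\xi_c$ is the \emph{least} upper bound while each $\xi_{b_j}$ is an upper bound, we also obtain $\xi_c\le\xi_{b_j}$. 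Taking $P_c$ to be the distribution of $\xi_c$ and invoking Strassen's theorem in each direction yields $P_{a_i}\preceq P_c\preceq P_{b_j}$ for all $i,j$.

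The main obstacle is the first step: producing the joint coupling. The index poset $\mathcal{A}$ is non-acyclic ($(m,n)$-bipartite), so Proposition~\ref{fm.claim} does not supply realizability on $\mathcal{A}$ directly, and one cannot merely assemble independent pairwise Strassen couplings---they must live on a single sample space so that all $mn$ monotonicity relations hold simultaneously. The acyclicity of the target $\mathcal{S}$ is precisely what makes this possible, since distinct Strassen couplings can be merged without conflict along the forest structure of $\mathcal{S}$. Once the coupling is in hand, the join construction is essentially automatic.
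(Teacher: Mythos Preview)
This theorem is not proved in the present paper; it is quoted from Section~5 of~\cite{fm2001} and invoked as a black box in Lemma~\ref{y.lemma}. So there is no in-paper argument to compare against, but your proposal still has a substantive gap.

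Your Step~2 is sound: in a poset whose Hasse diagram is a forest, every finite set with a common upper bound has a least upper bound (the unique peak on the tree path between them), so once the joint coupling exists, $\xi_c:=\bigvee_i\xi_{a_i}$ is well-defined and its law furnishes the desired $P_c$. The difficulty is entirely in Step~1, and there you offer only a sketch. Neither Proposition~\ref{fm.claim} (which requires $\mathcal{A}=\mathcal{S}$) nor Theorem~\ref{acyclic} (which requires an acyclic \emph{index} poset) delivers a monotone coupling for a cyclic $\mathcal{A}$ into an acyclic target, and your ``tree-induction on $\mathcal{S}$, pairwise Strassen, coherent splicing'' does not explain how the $mn$ pairwise couplings are to be made compatible on a single sample space---that compatibility is precisely what the cycles in $\mathcal{A}$ obstruct. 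More to the point, Step~1 is \emph{equivalent} to the theorem you are trying to prove: given $P_c$, the extended index $\hat{\mathcal{A}}$ is acyclic and Theorem~\ref{acyclic} then manufactures the joint coupling, while your join argument runs the implication the other way. So the proposal replaces Theorem~\ref{y.extension} by an equivalent statement and then asserts that statement. A non-circular argument must either construct $P_c$ directly on the tree $\mathcal{S}$ (presumably the content of Section~5 of~\cite{fm2001}) or exhibit the coupling by a concrete mechanism that does not already presuppose an interpolating measure.
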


In this section we assume that $\mathcal{S}$ is Y-glued bipartite.
Thus, we can uniquely identify the poset $\mathcal{A}$
of Definition~\ref{bipartite} and its acyclic extension $\hat{\mathcal{A}}$
consisting all $a_1,\ldots,a_m$ covered by $c$
and $b_1,\ldots,b_n$ covering $c$,
and construct the acyclic extension $\hat{\mathcal{S}}$
of $\mathcal{S}$ by Algorithm~\ref{ext.alg}.

\begin{lemma}\label{y.lemma}
Suppose that $(P_x: x\in\mathcal{A})$ is a stochastically monotone
system of probability measures on $\mathcal{S}$.
Then we can find a probability measure $\hat{P}_c$ on $\hat{\mathcal{S}}$
and some $\theta\in (0,1]$ so that
$\hat{P}_x = \theta P_x + (1-\theta) I_x$, $x\in\mathcal{A}$,
and $\hat{P}_c$ form
a stochastically monotone system
$(\hat{P}_x: x\in\hat{\mathcal{A}})$ of probability measures on $\hat{\mathcal{S}}$.
\end{lemma}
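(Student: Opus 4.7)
I plan to construct $\hat{P}_c$ of the form $\hat{P}_c = (1-\theta)\, I_c + \theta\, \bar{P}$ for a probability measure $\bar{P}$ on $\hat{\mathcal{S}}$ and a sufficiently small $\theta \in (0, 1/2]$. Unfolding the inequalities $\hat{P}_{a_i}\preceq\hat{P}_c\preceq\hat{P}_{b_j}$ on an arbitrary up-set $U$ of $\hat{\mathcal{S}}$ and using the terms $(1-\theta)I_c$, $(1-\theta)I_{a_i}$, $(1-\theta)I_{b_j}$ to absorb slack whenever $c$, $a_i$, or $b_j$ is missing from $U$ in the relevant inequality, the problem reduces to finding a probability measure $\bar{P}$ on $\hat{\mathcal{S}}$ satisfying
\[
  \max_{i:\, a_i \in U} P_{a_i}(U) \;\le\; \bar{P}(U) \;\le\; \min_j P_{b_j}(U)
  \quad\text{whenever } c \in U,
\]
and
\[
  \max_i P_{a_i}(U) \;\le\; \bar{P}(U) \;\le\; \min_{j:\, b_j \notin U} P_{b_j}(U)
  \quad\text{whenever } c \notin U.
\]

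For the construction I exploit the Y-class hypotheses on $\mathcal{S}_1$ and $\mathcal{S}_2$ separately. With the disjoint decomposition $\mathcal{S} = (\mathcal{S}_1\setminus\{c\})\sqcup(\mathcal{S}_2\setminus\{c\})$, write $\pi_i := P_{a_i}(\mathcal{S}_2\setminus\{c\})$ and $\rho_j := P_{b_j}(\mathcal{S}_2\setminus\{c\})$; the hypothesis $P_{a_i}\preceq P_{b_j}$ applied to the up-set $\mathcal{S}_2\setminus\{c\}$ of $\mathcal{S}$ gives $\pi_i \le \rho_j$ for all $i,j$. Define the projection $\tilde P^{(2)}_\alpha := P_\alpha|_{\mathcal{S}_2\setminus\{c\}} + P_\alpha(\mathcal{S}_1\setminus\{c\})\, I_c$ on $\mathcal{S}_2$. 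For every up-set $U$ of $\mathcal{S}_2$, the set $V = U$ (if $c\notin U$) or $V = U\cup(\mathcal{S}_1\setminus\{c\})$ (if $c\in U$) is an up-set of $\mathcal{S}$ and $\tilde P^{(2)}_\alpha(U) = P_\alpha(V)$, so $(\tilde P^{(2)}_\alpha : \alpha\in\mathcal{A})$ is stochastically monotone on $\mathcal{S}_2$. Since $\mathcal{S}_2$ is in Y-class, Theorem~\ref{y.extension} yields a probability measure $\tilde P^{(2)}_c$ on $\mathcal{S}_2$ with $\tilde P^{(2)}_{a_i} \preceq \tilde P^{(2)}_c \preceq \tilde P^{(2)}_{b_j}$, supplying the $\mathcal{S}_2$-portion of $\bar{P}$. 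A dual application using the Y-class property of $\mathcal{S}_1$ (in which $c$ sits at the top) supplies the $\mathcal{S}_1\setminus\{c\}$-portion, and the two pieces are balanced at $c$ so that $\bar{P}$ is a probability measure on $\hat{\mathcal{S}}$.

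The main obstacle is the asymmetry between the two halves. Projection to $\mathcal{S}_2$ preserves stochastic monotonicity because an up-set of $\mathcal{S}_2$ extends to an up-set of $\mathcal{S}$ by adjoining the entire lower half $\mathcal{S}_1\setminus\{c\}$ whenever $c$ is included; but the analogous projection to $\mathcal{S}_1$ fails, because an up-set of $\mathcal{S}_1$ missing $c$ need not be an up-set of $\mathcal{S}$ (its $\mathcal{S}_2$-closure is absent). Bridging this gap—choosing the $\mathcal{S}_1$-side contribution to $\bar{P}$ compatibly with $\tilde P^{(2)}_c$ at the seam $c$—is the central technical step, and it is precisely what forces the dilution $\theta$ to be strictly less than $1$; the Y-class hypothesis on $\mathcal{S}_1$ is the ingredient that makes this dual construction possible.
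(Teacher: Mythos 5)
Your overall strategy---dilute toward unit masses, then produce the middle measure at $c$ by projecting the system onto the two Y-class halves and invoking Theorem~\ref{y.extension}---is the same as the paper's, and your $\mathcal{S}_2$-side projection is exactly the paper's system $Q''$. But the proposal stops short of a proof at precisely the point you yourself flag as ``the central technical step'': the construction of the $\mathcal{S}_1$-portion, the balancing of the two halves at $c$, and the verification of the stochastic comparisons for up-sets of $\hat{\mathcal{S}}$ that meet both halves are never carried out. Moreover, the obstacle you cite is not real: the dual projection onto $\mathcal{S}_1$ (move the mass on $S_2\setminus\{c\}$ to $c$, which is maximal in $\mathcal{S}_1$) is stochastically monotone by the same argument as on the $\mathcal{S}_2$ side. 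Every order relation of $\mathcal{S}$ crossing the cut passes through some pair $a_i<b_j$, so an up-set $U$ of $\mathcal{S}_1$ with $c\notin U$ contains no element lying below any $a_i$, hence no element with upper bounds in $S_2\setminus\{c\}$, and $U$ itself is an up-set of $\mathcal{S}$; when $c\in U$ the set $(U\setminus\{c\})\cup(S_2\setminus\{c\})$ is an up-set of $\mathcal{S}$. This projected system is exactly the paper's $(Q'_x)$, to which Theorem~\ref{y.extension} applies symmetrically, so there is no asymmetry to bridge.

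A second problem is the reduction itself. You insist on $\hat{P}_c=(1-\theta)I_c+\theta\bar{P}$ with $\bar{P}$ a probability measure satisfying the undiluted sandwich $\max_i P_{a_i}(U)\le\bar{P}(U)\le\min_j P_{b_j}(U)$ in the indicated cases. This is stronger than what is needed and than what the paper establishes: the two extensions $Q'_c$ and $Q''_c$ obtained on the halves can have combined off-$c$ mass $p_c=Q'_c(S_1\setminus\{c\})+Q''_c(S_2\setminus\{c\})$ exceeding $1$, and the paper's $\hat{P}_c$ is obtained by scaling both halves by $\theta=1/p_c$; the comparisons are then verified against the diluted measures $\hat{P}_{a_i},\hat{P}_{b_j}$, splitting an up-set $U$ into $U\cap S_1$ and $U\cap S_2$ and, when $c\in U$, passing to complements so that the atoms $(1-\theta)I_{a_i}$ supply the slack. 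Thus the dilution is forced by normalizing the glued middle measure, not by any failure of the $\mathcal{S}_1$-projection; your sketch neither constructs $\bar{P}$ nor shows that your stronger sandwich is satisfiable, so the argument as written is incomplete.
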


\begin{proof}
Let $\mathcal{S}_1$ and $\mathcal{S}_2$ be the Y-class posets constructed by
Algorithm~\ref{ext.alg}.
Then we can define a stochastically monotone system
$(Q'_x: x\in\mathcal{A})$ of probability measures on $\mathcal{S}_1$ by
setting for each $x \in\mathcal{A}$
\begin{equation*}
Q'_x(y) = \begin{cases}
  P_x(y) & \mbox{ if $y \in S_1\setminus\{c\}$; } \\
  1 - P_x(S_1\setminus\{c\}) & \mbox{ if $y = c$. }
\end{cases}
\end{equation*}
By Theorem~\ref{y.extension}
we can extend it to a stochastically monotone system
$(Q'_x: x\in \hat{\mathcal{A}})$ of probability measures on $\mathcal{S}_1$.
In the exact same way by replacing $\mathcal{S}_1$ with $\mathcal{S}_2$,
we can find a stochastically monotone system
$(Q''_x: x\in\hat{\mathcal{A}})$ of probability measures on $\mathcal{S}_2$.
Let $p_c = Q'_c(S_1\setminus\{c\}) + Q''_c(S_2\setminus\{c\})$,
and set $\theta = 1/p_c$ if $p_c > 1$; otherwise, set $\theta = 1$.
We can introduce
$$
\hat{P}_c(y) = \begin{cases}
  \theta Q'_c(y) & \mbox{ if $y \in S_1\setminus\{c\}$; } \\
  \theta Q''_c(y) & \mbox{ if $y \in S_2\setminus\{c\}$; } \\
  1 - \theta p_c & \mbox{ if $y = c$. }
\end{cases}
$$

Consider an up-set $U$ in $\hat{\mathcal{S}}$.
We will verify that $\hat{P}_{a_i}(U) \le \hat{P}_c(U)$.
Observe that $U \cap S_1$ is an up-set in $\mathcal{S}_1$
and that $U \cap S_2$ is an up-set in $\mathcal{S}_2$.
If $c \not\in U$ then $a_i \not\in U$ and
\begin{align*}
\hat{P}_{a_i}(U)
& = \theta P_{a_i}(U \cap S_1)
+ \theta P_{a_i}(U \cap S_2)
\\ & = \theta Q'_{a_i}(U \cap S_1)
+ \theta Q''_{a_i}(U \cap S_2)
\\ & \le \theta Q'_{c}(U \cap S_1)
+ \theta Q''_{c}(U \cap S_2)
= \hat{P}_c(U)
\end{align*}
If $c \in U$ then
\begin{align*}
\hat{P}_{a_i}(\hat{S}\setminus U)
& \ge \theta P_{a_i}(S_1\setminus U) + \theta P_{a_i}(S_2\setminus U)
\\ & = \theta Q'_{a_i}(S_1\setminus U) + \theta Q''_{a_i}(S_2\setminus U)
\\ & \ge \theta Q'_{c}(S_1\setminus U) + \theta Q''_{c}(S_2\setminus U)
  = \hat{P}_c(\hat{S}\setminus U)
\end{align*}
Similarly we can verify that $\hat{P}_c(U) \le \hat{P}_{b_j}(U)$,
and complete the proof.
\end{proof}

Now consider a stochastically monotone transition probability $P=(P_x:x\in\mathcal{S})$.
By Lemma~\ref{y.lemma}
the subsystem $(P_x:x\in\mathcal{A})$ allows us to produce a stochastically
monotone system $(\hat{P}_x:x\in\hat{\mathcal{A}})$
and to extend it to $\hat{P}=(\hat{P}_x:x\in\hat{\mathcal{S}})$
with $\hat{P}_x = \theta P_x + (1-\theta) I_x$, $x \in\mathcal{S}$,
for some $\theta\in (0,1]$.
Since $\hat{\mathcal{S}}$ is acyclic and $\hat{P}$ is stochastically monotone,
a random monotone map $(\xi_x:x\in\mathcal{S})$
realizes $(P_x:x\in\mathcal{S})$ by Proposition~\ref{fm.claim}.
Hence, $P$ is realizably weakly monotone,
which completes the proof of Proposition~\ref{y.prop}.

\section{A conjecture by Dai Pra et al.}\label{pra.conjecture.section}
\setcounter{equation}{0}\setcounter{figure}{0}

Dai Pra~{et al.}~\cite{pra} introduced
the following subclass of posets,
and gave Proposition~\ref{pra.conjecture}
as a conjecture on monotonicity equivalence.

\begin{definition}\label{w-glued.diamond}
We call $\mathcal{S}$ a \emph{W-glued diamond}
if (i) it has a unique subposet of diamond
with elements $a, b, c, d$ [as in Figure~\ref{named.posets}(i)],
(ii) it has no induced subposet of $\mathcal{S}_1$
and $\hat{\mathcal{S}}_4$ in Figure~\ref{forbidden}
(including the dual of $\mathcal{S}_1$),
and (iii) the Hasse diagram created from $\mathcal{S}$
by removing the arc $(a,b)$, $(a,c)$, $(b,d)$, $(c,d)$ of the diamond
belongs to W-class.
\end{definition}

\begin{figure}[h]
\begin{center}
\begin{tabular}{ccc}

\begin{tikzpicture}
  \node [mynode] (d) at (0,0) {};
  \node [mynode] (b) at (-1,-1) {};
  \node [mynode] (c) at (1,-1) {};
  \node [mynode] (a) at (0,-2) {};
  \node [mynode] (x) at (0,-3) {};
  \draw
  (d) -- (b)
  (d) -- (c)
  (b) -- (a)
  (c) -- (a)
  (a) -- (x);
\end{tikzpicture}

& \hspace{5ex} &

\begin{tikzpicture}
  \node [mynode] (d) at (0,0) {};
  \node [mynode] (b) at (-1,-1) {};
  \node [mynode] (c) at (1,-1) {};
  \node [mynode] (a) at (0,-2) {};
  \node [mynode] (y) at (2,0) {};
  \node [mynode] (x) at (2,-2) {};
  \draw
  (d) -- (b)
  (d) -- (c)
  (b) -- (a)
  (c) -- (a)
  (c) -- (y)
  (c) -- (x);
\end{tikzpicture}

\\
(a) $\mathcal{S}_1$
& & (b) $\hat{\mathcal{S}}_4$
\end{tabular}
\caption{Posets not allowed in W-glued diamond}
\label{forbidden}
\end{center}
\end{figure}
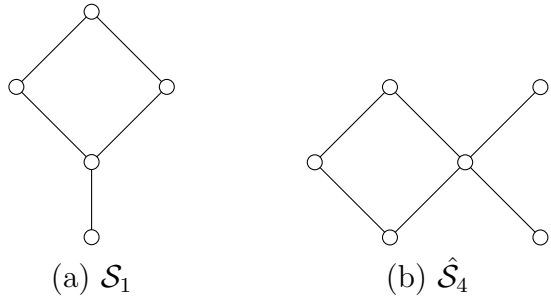

\begin{proposition}\label{pra.conjecture}
Suppose that $\mathcal{S}$ has no acyclic extension.
Monotone equivalence holds for a continuous-time Markov process
if and only if $\mathcal{S}$ is a W-glued diamond.
\end{proposition}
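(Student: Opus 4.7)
The plan is to invoke Proposition~\ref{weak.equivalence} and thereby reduce the claim to showing that, under the assumption that $\mathcal{S}$ has no acyclic extension, weak monotonicity equivalence holds for a discrete-time Markov chain on $\mathcal{S}$ if and only if $\mathcal{S}$ is a W-glued diamond. The two directions will be handled by very different techniques: the necessity by explicit counterexamples along the lines of Lemma~\ref{y.prop.necessity}, the sufficiency via the modified recursive inverse transform that is developed in Section~\ref{w.sec}.

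For \emph{necessity}, I would suppose that $\mathcal{S}$ has no acyclic extension but violates at least one of the conditions (i)--(iii) of Definition~\ref{w-glued.diamond}, and for each failure mode exhibit a stochastically monotone transition probability $(P_x : x \in \mathcal{S})$ that is not realizably weakly monotone. The template is Lemma~\ref{y.prop.necessity}: distribute balanced $\tfrac{1}{3}$-masses on the vertices of a small obstruction subgraph so that any random monotone map realizing $\theta P + (1-\theta) I$ overcounts the marginal at one vertex, producing a contradiction with $\tilde{P}_x(\cdot)=\theta/3$. When (iii) fails — so the Hasse diagram obtained by deleting the four diamond arcs contains a bowtie, a Y-poset, or a dual Y-poset as induced subposet — I would replay that bowtie argument inside the obstruction, localized off the diamond. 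When (ii) fails (an induced $\mathcal{S}_1$, its dual, or $\hat{\mathcal{S}}_4$), the additional element forces probability mass to be routed across the diamond in a way that any weakly realizable coupling overcounts a marginal. When (i) fails — two distinct diamond subposets — the same mass-accounting argument on two diamonds sharing at least one element produces the contradiction.

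For \emph{sufficiency}, I would let $\mathcal{S}$ be a W-glued diamond with the unique diamond $\{a,b,c,d\}$ ($b,c$ covering $a$ and $d$ covering $b,c$), and let $P$ be stochastically monotone on $\mathcal{S}$. Denote by $\mathcal{S}^*$ the poset obtained by removing the four diamond arcs from $\mathcal{S}$; by condition (iii) of Definition~\ref{w-glued.diamond}, $\mathcal{S}^*$ is of W-class. Since dropping comparabilities loosens stochastic-monotonicity constraints, $P$ remains stochastically monotone on $\mathcal{S}^*$. I would then apply the modified recursive inverse transform of Section~\ref{w.sec} — a refinement of Corollary~\ref{w.cor} — to construct, for some $\theta\in(0,1]$, a random monotone map on the full poset $\mathcal{S}$ that realizes $\theta P + (1-\theta) I$. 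The key feature of the modified transform is that the slack introduced by the $(1-\theta)$-weight on the identity can simultaneously absorb the couplings needed to enforce the diamond comparabilities $\xi(a)\le\xi(b)\le\xi(d)$ and $\xi(a)\le\xi(c)\le\xi(d)$ — precisely because conditions (i) and (ii) of Definition~\ref{w-glued.diamond} exclude exactly those configurations in which the two diamond-side couplings would obstruct each other.

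The sufficiency direction is the main obstacle. The heart of the argument is the construction of a recursive inverse transform adapted to a W-glued diamond that jointly respects the W-class skeleton $\mathcal{S}^*$ and the four diamond comparabilities; this is where the forbidden induced subposets $\mathcal{S}_1$, its dual, and $\hat{\mathcal{S}}_4$ come in, since they are exactly the configurations that destroy the compatibility of a W-class inverse transform with a diamond lift. Once that construction is in place in Section~\ref{w.sec}, the structural reduction sketched above turns the sufficiency into a routine application, and combined with the necessity counterexamples completes the proof of Proposition~\ref{pra.conjecture}.
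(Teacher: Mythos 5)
Your overall frame (reduce via Proposition~\ref{weak.equivalence}, prove necessity by explicit stochastically-monotone-but-not-realizably-weakly-monotone counterexamples and sufficiency by a constructive coupling) matches the paper, but the sufficiency half contains a genuine gap and, as written, a false step. You claim that since ``dropping comparabilities loosens stochastic-monotonicity constraints,'' the system $P$ remains stochastically monotone on the skeleton $\mathcal{S}^*$ obtained by deleting the four diamond arcs. This is backwards for the target poset: deleting order relations creates \emph{more} up-sets (e.g.\ $\{b\}$ is not an up-set of the diamond but becomes one in $\mathcal{S}^*$), so stochastic monotonicity on $\mathcal{S}$ does not transfer to $\mathcal{S}^*$, and Corollary~\ref{w.cor} cannot be invoked directly. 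The paper never makes this transfer; instead it redistributes mass (e.g.\ lumping the mass at $a$ onto $b$ as in \eqref{q.cons}) to build auxiliary systems that \emph{are} stochastically monotone on rooted-tree pieces, constructs the recursive inverse transforms there, and then modifies them on explicitly computed intervals \eqref{i.mod} so that some outputs are pulled back to $a$ while monotonicity across the diamond is preserved (Lemmas~\ref{y.me} and~\ref{w.lemma}, Theorem~\ref{w.theorem}). That modification, including the choice of the slacks $\gamma_\alpha$ and the bookkeeping with the interlacing distribution $\mu$, is the heart of the proof; your proposal defers it entirely to ``the modified recursive inverse transform of Section~\ref{w.sec},'' i.e.\ assumes the key technical content. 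You also omit a structural step the paper needs: the modified transform only realizes the four-measure subsystem indexed by the diamond $\mathcal{A}$; passing to the full transition probability $(P_x:x\in\mathcal{S})$ requires realizing each subsystem indexed by the acyclic tails $\mathcal{W}_\alpha$ (Theorem~\ref{acyclic}) and gluing the realizations at the shared indices $a,b,c,d$ (Lemma~\ref{index.glued}).

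On necessity your sketch is closer to the paper but still incomplete. The paper does not re-derive the failure for induced $\mathcal{S}_1$, its dual, or $\hat{\mathcal{S}}_4$ (it cites Proposition~4.4 of Dai Pra et al., Remark~\ref{s4.remark}), and it needs Lemma~\ref{pra.lemma} together with Proposition~5.11 of~\cite{fm2001} to guarantee that a poset with no acyclic extension and none of those obstructions actually contains a diamond, before running the uniqueness argument (Lemma~\ref{unique.diamond}) and the Y-poset argument (Lemma~\ref{no.yposet}); the counterexamples there use $\tfrac12$-masses on pairs and case analysis on how the second cyclic subposet or Y-poset meets the diamond, plus the final extension of the four-measure counterexample to a full transition probability. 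Your ``route mass across the diamond and overcount a marginal'' description points in the right direction but would need to be turned into those concrete constructions to constitute a proof.
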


\begin{remark}\label{s4.remark}
In conjunction with examinations by \cite{pra}
we use the symbol $\hat{\mathcal{S}}_4$ in Definition~\ref{w-glued.diamond}(ii)
and indicate it as an extension of $\mathcal{S}_4$ in Proposition~3.1 of~\cite{pra}.
In general if $\mathcal{S}$ has an induced subposet of $\mathcal{S}_1$
or $\hat{\mathcal{S}}_4$
then monotonicity equivalence fails for a continuous-time Markov process
by Proposition~4.4 of~\cite{pra}.
Hence, Definition~\ref{w-glued.diamond}(ii) is necessary for
monotonicity equivalence.
\end{remark}

\subsection{The proof of necessity}
\label{nec.conjecture}

In the rest of this section
we assume that $\mathcal{S}$ has no acyclic extension.
Dai Pra~{et al.}~\cite{pra}
have almost completed the proof of necessity of Proposition~\ref{pra.conjecture}.
The following lemma summarizes their results on
(i) the subdivided bowtie~$\mathcal{S}_4$ with height~$3$
in Proposition~4.4,
(ii) the double-bowtie~$\mathcal{S}_8$
in Proposition~4.5,
and (iii) the $k$-crown with some $k \ge 3$
in Remark~4.6 of~\cite{pra}.

\begin{lemma}\label{pra.lemma}
Monotone equivalence fails for a continuous-time Markov process
if $\mathcal{S}$ has an induced subposet of
(i) subdivided bowtie with height at least~$3$,
(ii) double-bowtie, or
(iii) $k$-crown with some $k \ge 3$.
\end{lemma}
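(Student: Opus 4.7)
The plan is to reduce the statement to the three base cases proved by Dai Pra \emph{et al.}\ via a general lifting argument. In each of Propositions~4.4 and 4.5 and Remark~4.6 of~\cite{pra}, the authors exhibit, for the specific poset $\mathcal{S}'$ in question (a subdivided bowtie of height at least~$3$, a double-bowtie, or a $k$-crown with $k\ge 3$), an explicit generator $L'$ which is stochastically monotone on $\mathcal{S}'$ but whose associated continuous-time Markov process fails to be realizably monotone. The first step of the proof is simply to invoke these constructions to obtain such an $L'$ corresponding to whichever induced subposet $\mathcal{S}'\subseteq\mathcal{S}$ is assumed to exist.

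The second step is the lifting itself. I would define a generator $L$ on $\mathcal{S}$ by $L(x,y) = L'(x,y)$ for distinct $x,y\in\mathcal{S}'$, and $L(x,y) = 0$ for every other off-diagonal pair (with diagonal entries adjusted so that $\sum_y L(x,y)=0$). For stochastic monotonicity of $L$ on $\mathcal{S}$ I would invoke Massey's characterization in Lemma~\ref{sm.lem}(a): for any up-set $U$ of $\mathcal{S}$, the trace $U\cap\mathcal{S}'$ is an up-set of $\mathcal{S}'$, so whenever $x\le y$ with both in $\mathcal{S}'$ the required inequalities $L(x,U)\le L(y,U)$ and $L(x,U^c)\ge L(y,U^c)$ follow from the corresponding inequalities for $L'$ applied to $U\cap\mathcal{S}'$; if either $x$ or $y$ lies outside $\mathcal{S}'$, the relevant side vanishes and monotonicity is trivial.

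For failure of realizable monotonicity I would argue by contrapositive through Lemma~\ref{rm.lem}(a). Suppose $L$ admits a nonnegative representation $L(x,y) = \sum_{h\in\tilde{\mathcal{M}}} \gamma(h)\, I(h(x),y)$ for $x\neq y$, with $\mathcal{M}$ taken on $\mathcal{S}$. For $x\in\mathcal{S}'$ and $y\in\mathcal{S}\setminus\mathcal{S}'$ the left side vanishes, forcing $h(x)\in\mathcal{S}'$ whenever $\gamma(h)>0$ and $x\in\mathcal{S}'$. Each such $h$ therefore restricts to a monotone self-map $h|_{\mathcal{S}'}$ of $\mathcal{S}'$, since restrictions of monotone maps to induced subposets are monotone. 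Setting $\gamma'(g) = \sum_{h:\,h|_{\mathcal{S}'}=g} \gamma(h)$ on the non-identity monotone self-maps of $\mathcal{S}'$ produces a nonnegative representation of $L'$ of the form required by Lemma~\ref{rm.lem}(a), contradicting the failure of realizable monotonicity for $L'$.

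The main subtlety lies in the restriction step: one must verify that those $h$ whose restriction is the identity on $\mathcal{S}'$ contribute nothing to $L'(x,y)$ for $x\neq y$ in $\mathcal{S}'$ (immediate, since then $I(h(x),y)=I(x,y)=0$) and that the aggregation into $\gamma'$ preserves nonnegativity (also immediate). Both are finite combinatorial checks, so no new counterexample needs to be constructed; the proof is essentially an invocation of~\cite{pra} together with the lifting lemma above.
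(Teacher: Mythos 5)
There is a genuine gap in your lifting step, and it is exactly at the sentence ``if either $x$ or $y$ lies outside $\mathcal{S}'$, the relevant side vanishes and monotonicity is trivial.'' The side that vanishes is the wrong side of Massey's inequalities. If $x\le y$ with $x\in\mathcal{S}'$ and $y\notin\mathcal{S}'$, condition (i) of Lemma~\ref{sm.lem} demands $L(x,U)\le L(y,U)=0$ for every up-set $U$ avoiding $y$, which forces $L'(x,z)=0$ for all $z\in S'$ with $z\not\le y$; dually, if $x\notin\mathcal{S}'$ and $y\in\mathcal{S}'$, condition (ii) demands $0=L(x,U^c)\ge L(y,U^c)$ for every up-set $U$ containing $x$, which forces $L'(y,z)=0$ for all $z\in S'$ with $z\not\ge x$. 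Neither requirement is met by the counterexample generators of~\cite{pra} in general. Concretely, let $\mathcal{S}'$ be the $3$-crown with minimal elements $m_1,m_2,m_3$ and maximal elements $M_1,M_2,M_3$, and let $\mathcal{S}$ be obtained by adjoining one new element $x$ with $x<M_1$ and $x$ incomparable to everything else; $\mathcal{S}'$ is an induced subposet of $\mathcal{S}$. Then $U=\{x,M_1\}$ is an up-set of $\mathcal{S}$ containing $x$, and condition (ii) applied to the pair $x\le M_1$ forces $\sum_{z\in S'\setminus\{M_1\}}L'(M_1,z)=0$, i.e.\ every jump rate out of $M_1$ must vanish, which is incompatible with the crown counterexample. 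So the zero-extension need not be stochastically monotone, and the reduction collapses; your third paragraph (restriction of monotone self-maps and aggregation into $\gamma'$) is fine as far as it goes, but it leans on the very zero off-diagonal structure that the monotone extension cannot in general retain.

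For comparison, the paper does not prove Lemma~\ref{pra.lemma} at all: it is quoted from Propositions~4.4, 4.5 and Remark~4.6 of~\cite{pra}, which already assert failure whenever $\mathcal{S}$ merely \emph{contains} the relevant induced subposet (cf.\ Remark~\ref{s4.remark}), so the heredity you are trying to supply is part of the cited results. Where the paper does prove heredity-type statements of its own (Lemmas~\ref{y.prop.necessity}, \ref{unique.diamond}, \ref{no.yposet}), it works at the level of weak monotonicity via Proposition~\ref{weak.equivalence} and extends a stochastically monotone \emph{system of measures} from the subposet to all of $\mathcal{S}$ by copying a suitable measure onto each external vertex according to its position (e.g.\ $P_x=P_d$ if $b<x$ or $c<x$, and $P_x=P_a$ otherwise, in the paragraph after Lemma~\ref{no.yposet}), not by zero-extending a generator. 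If you want a self-contained argument, that measure-copying device (or a rate-copying analogue chosen compatibly with the external comparabilities) is the kind of extension you would need, and its monotonicity must be checked against the specific counterexample, which is precisely the content of the constructions in~\cite{pra}.
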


In Proposition~5.11 of~\cite{fm2001}
we found that if $\mathcal{S}$ has no acyclic extension
then it has an induced subposet of Lemma~\ref{pra.lemma}(i)--(iii) or diamond.
Thus, in order to examine monotonicity equivalence
we can further assume that
$\mathcal{S}$ has an induced subposet of diamond.
The next lemma indicates that
such cyclic subposet must be necessarily unique
as required in Definition~\ref{w-glued.diamond}(i).

\begin{lemma}\label{unique.diamond}
Suppose that $\mathcal{S}$ has
an induced cyclic subposet $\mathcal{A}$ of Figure~\ref{named.posets}(i).
Weak monotonicity equivalence fails for $(\mathcal{S},\mathcal{A})$ if
$\mathcal{S}$ has an induced cyclic subposet $\mathcal{S}'$ of
(i) bowtie, or
(ii) diamond distinct from $\mathcal{A}$.
\end{lemma}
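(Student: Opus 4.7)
The plan, following the template of Lemma~\ref{y.prop.necessity}, is to construct in each case an explicit stochastically monotone transition probability $(P_x: x\in\mathcal{S})$ that fails realizable weak monotonicity, i.e., no $\theta\in(0,1]$ makes $\theta P + (1-\theta) I$ realizably monotone. The obstruction will be localized at the four diamond vertices $\mathcal{A}=\{a,b,c,d\}$ (with $a$ minimum, $b,c$ the incomparable middle pair, $d$ maximum); for $x\in\mathcal{S}\setminus\mathcal{A}$, I would define $P_x$ by an up-set partition of $\mathcal{S}$ around $d$ exactly as in Lemma~\ref{y.prop.necessity}, interpolating between $P_a$ and $P_d$, so that stochastic monotonicity against every up-set of $\mathcal{S}$ follows automatically and the failure is localized on the four diamond vertices.

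For case~(i), label the bowtie $\mathcal{S}'=\{e,f,g,h\}$ with $e,f$ minimal, $g,h$ maximal, and $e,f<g,h$; note that $g\not\le h$ and $h\not\le g$. I would choose $P_a,P_b,P_c,P_d$ as convex combinations of unit masses on $\{e,f,g,h\}$ in a cyclic pattern so that stochastic monotonicity along $a<b,c<d$ is routine to verify, while any candidate realization $(\xi_a,\xi_b,\xi_c,\xi_d)$ of $\tilde P=\theta P+(1-\theta)I$ must satisfy the forced implications ``$\xi_b=g\Rightarrow\xi_d=g$'' and ``$\xi_c=h\Rightarrow\xi_d=h$'' arising from $\xi_b,\xi_c\le\xi_d$ together with the incomparability of $g$ and $h$. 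Propagated through $\xi_a\le\xi_b,\xi_c$, these produce an excess-mass inequality $\mathbb{P}(\xi_\alpha=x)>\tilde{P}_\alpha(x)$ at some witness pair $(\alpha,x)\in\mathcal{A}\times\mathcal{S}'$, mirroring the $\mathbb{P}(\xi_{a_1}=f)\ge 2\theta/3$ step in Lemma~\ref{y.prop.necessity}, with a margin linear in $\theta$ so that the contradiction holds for every $\theta\in(0,1]$.

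Case~(ii) I will split on $k=|\mathcal{A}\cap\mathcal{S}'|\in\{0,1,2,3\}$, with $\mathcal{S}'=\{a',b',c',d'\}$ the second diamond. When $k=3$, say $\mathcal{A}$ and $\mathcal{S}'$ share $\{a,b,c\}$ with distinct maxima $d,d'$: if $d\|d'$ then $\{b,c,d,d'\}$ is an induced bowtie in $\mathcal{S}$ and case~(i) applies, whereas if $d<d'$ or $d>d'$ (necessarily a cover in the $5$-element union) then $\{a,b,c,d,d'\}$ is an induced copy of $\mathcal{S}_1$ or its dual, and failure follows from Definition~\ref{w-glued.diamond}(ii) via Remark~\ref{s4.remark} and Proposition~4.4 of \cite{pra}. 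For $k\le 2$, analogous case-work locates either an induced bowtie (reducing to case~(i)) or an induced $\mathcal{S}_1$ or $\hat{\mathcal{S}}_4$ (invoking the forbidden-configuration results), with any residual configuration admitting a direct cascade counterexample built on the incomparable middles $b',c'$ of the second diamond in the same spirit as case~(i).

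The main obstacle is the exhaustive structural case analysis in case~(ii): two diamonds can be embedded into a finite connected poset $\mathcal{S}$ in a variety of nonisomorphic ways, and each pattern must either be reduced to an already-forbidden induced subposet or admit a direct cascade construction. A secondary technical hurdle, present already in case~(i), is the $\theta$-uniformity of the contradiction: because $\tilde{P}_x$ retains an extra $(1-\theta)I_x$ mass, the chain of forced coincidences must produce an excess-mass inequality whose margin scales linearly in $\theta$ and hence persists for every $\theta\in(0,1]$. As in Lemma~\ref{y.prop.necessity}, this is handled by tracking the probability budget carefully through each forced implication so that both sides of the inequality scale in parallel with $\theta$.
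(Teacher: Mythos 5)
There is a genuine gap: what you have written is a plan that defers exactly the parts that constitute the proof. In case (i) you never engage with the possibility that the bowtie $\mathcal{S}'$ overlaps the diamond $\mathcal{A}$, yet that overlap is the entire new content of this lemma (the disjoint case is already settled by the examples in \cite{fm2001}, as the paper notes). The paper's proof must split according to whether the shared element is extremal ($e\in\{a,d\}$) or a middle element ($e\in\{b,c\}$), and it uses two \emph{different} explicit systems, chosen after normalizing labels so that, e.g., $g\neq b$ and $h\neq c$ in the first subcase and $g\notin A$ in the second, while still allowing coincidences such as $g=c$ or $h=b$. A single unspecified ``cyclic pattern'' of half-masses cannot be asserted to work uniformly: when a bowtie vertex coincides with a diamond vertex, both the stochastic-monotonicity check and the location of the extra $(1-\theta)I_x$ mass change, and your forced-implication cascade (``$\xi_b=g\Rightarrow\xi_d=g$'', etc.) depends on which construction is used. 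The $\theta$-uniformity issue you flag is in fact the routine part (the paper's contradictions all take the form $\mathbb{P}(\xi_\alpha=x)\ge\theta$ versus $\tilde P_\alpha(x)=\theta/2$); the overlap bookkeeping is the hard part, and it is absent.

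Case (ii) has the same problem in sharper form. Your $k=3$ reduction (to a bowtie or to $\mathcal{S}_1$/its dual) covers only the sharing pattern $\{a,b,c\}$ with two maxima, not the other three-element intersections (e.g.\ sharing $\{a,b,d\}$ with distinct middle elements), and for $k\le 2$ you invoke ``analogous case-work'' and a ``residual configuration admitting a direct cascade counterexample'' without exhibiting either the case list or the construction. The paper avoids this combinatorial explosion entirely: it disposes of the single subcase $b,c\in S'$ by locating an induced $\mathcal{S}_5$ and citing Proposition~4.4 of \cite{pra} (consistent with your use of Remark~\ref{s4.remark}), and then, after the harmless normalization $b\notin S'$ and $c\neq b'$, gives \emph{one} explicit system
$P_a=\tfrac12 I_{\{a',b'\}}$, $P_b=\tfrac12 I_{\{b',c'\}}$, $P_c=\tfrac12 I_{\{a',d'\}}$, $P_d=\tfrac12 I_{\{b',d'\}}$,
whose realization forces $\mathbb{P}(\xi_b=b')\ge\theta>\theta/2$ regardless of how the two diamonds intersect. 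Until you supply concrete measures and verify the forced-coincidence chain in every admissible overlap configuration, the proposal does not establish the lemma. A minor further point: the lemma concerns systems indexed by $\mathcal{A}$ only; your extension of $P$ to all of $\mathcal{S}$ is not needed here and is carried out separately in the paper after Lemma~\ref{no.yposet}.
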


\begin{proof}
In either of the cases,
monotonicity equivalence fails for
a system $(P_x: x\in\mathcal{A})$ of probability measures on $\mathcal{S}$;
see (i) Example~4.8 and (ii) Example~1.1 of~\cite{fm2001}.
And it is easily verified that weak monotonicity equivalence also fails
if $A \cap S' = \varnothing$.
In the following we will show that
it also fails when $A \cap S' \neq \varnothing$.

(i)
Let $\mathcal{S}'$ be the bowtie
labeled as in Figure~\ref{named.posets}(ii).
Without loss of generality we assume that $e \in A\cap S'$.
Suppose that either $e = a$ or $e = d$.
Since $e$ and $f$ are incomparable,
we can see that $f \not\in A$. Furthermore, we can
assume that $g \neq b$ and $h \neq c$
(which does not exclude the case that $g=c$ or $h=b$).
Then we can construct the stochastically monotone system
\begin{equation*}
P_a := \frac{1}{2} I_{\{e,f\}}; \quad
P_b := \frac{1}{2} I_{\{f,h\}}; \quad
P_c := \frac{1}{2} I_{\{f,g\}}; \quad
P_d := \frac{1}{2} I_{\{g,h\}}.
\end{equation*}
Let $\xi_x$ be a random monotone map from $\mathcal{A}$ to $\mathcal{S}$.
If $\xi_x$ realizes
$(\theta P_x + (1-\theta)I_x : x\in\mathcal{A})$
for some $\theta \in (0,1]$,
then we must be able to observe that
$\xi_d=g$ implies $\xi_b=f$ and $\xi_d=h$ implies $\xi_c=f$,
and therefore, that
\begin{equation*}
  \mathbb{P}(\xi_a = f) \ge
  \mathbb{P}(\xi_d = g,\, \xi_a = f)
+ \mathbb{P}(\xi_d = h,\, \xi_a = f)
\ge \theta,
\end{equation*}
which contradicts that $\theta P_a(f) + (1-\theta)I_a(f) = \theta/2$;
thus, $(P_x: x\in\mathcal{A})$ cannot be realizably weakly monotone.

Suppose that $e = b$ or $e = c$.
Then $c \not\in \{g,h\}$.
Without loss of generality
we can assume that $e = b$ and $g \not\in A$.
Let
\begin{equation*}
P_a := \frac{1}{2} I_{\{e,f\}}; \quad
P_b := \frac{1}{2} I_{\{e,g\}}; \quad
P_c := \frac{1}{2} I_{\{f,g\}}; \quad
P_d := \frac{1}{2} I_{\{g,h\}}.
\end{equation*}
be the stochastically monotone system
of probability measures on $S$.
If $\xi_x$ realizes $(\theta P_x + (1-\theta)I_x : x\in\mathcal{A})$
then we have
\begin{equation*}
  \mathbb{P}(\xi_d = g) \ge
  \mathbb{P}(\xi_a = e,\, \xi_d = g)
+ \mathbb{P}(\xi_a = f,\, \xi_d = g)
\ge \theta,
\end{equation*}
which contradicts that $\theta P_d(g) + (1-\theta)I_d(g) = \theta/2$.

(ii)
Let $\mathcal{S}'$ be
the second induced subposet of diamond labeled as $a'$, $b'$, $c'$, and $d'$
in the scheme similar to Figure~\ref{named.posets}(i).
If $b, c \in S'$,
then $S$ must have an induced subposet of $\mathcal{S}_5$
in Proposition~3.1 of~\cite{pra},
and therefore, monotonicity equivalence fails by
Proposition~4.4 of~\cite{pra}.
Thus, we can assume without loss of generality
that $b \not\in S'$,  and $c \neq b'$.
Then we can construct the stochastically monotone system
\begin{equation*}
P_a := \frac{1}{2} I_{\{a',b'\}}; \quad
P_b := \frac{1}{2} I_{\{b',c'\}}; \quad
P_c := \frac{1}{2} I_{\{a',d'\}}; \quad
P_d := \frac{1}{2} I_{\{b',d'\}}.
\end{equation*}
If $\xi_x$ realizes $(\theta P_x + (1-\theta)I_x : x\in\mathcal{A})$,
then we have
\begin{equation*}
  \mathbb{P}(\xi_b = b') \ge
  \mathbb{P}(\xi_a = b',\, \xi_b = b',\, \xi_d = d')
+ \mathbb{P}(\xi_d = b',\, \xi_b = b',\, \xi_a = a')
\ge \theta,
\end{equation*}
which contradicts that $\theta P_b(b') + (1-\theta)I_b(b') = \theta/2$.
\end{proof}

The Hasse diagram of Definition~\ref{w-glued.diamond}(iii)
consists of four disconnected components,
each of which contains $a$, $b$, $c$, or $d$.
The next lemma justifies the necessity of W-class for each component
to attain monotonicity equivalence.

\begin{lemma}\label{no.yposet}
Suppose that the diamond $\mathcal{A}$ of Lemma~\ref{unique.diamond} is a
unique induced cyclic subposet of $\mathcal{S}$.
Then weak monotonicity equivalence fails for $(\mathcal{S},\mathcal{A})$ if
$\mathcal{S}$ has an induced subposet $\mathcal{S}'$ of
Y-poset which shares at most one element with $\mathcal{A}$.
\end{lemma}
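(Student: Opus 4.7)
The plan is to produce, in each configuration allowed by the hypotheses, a specific stochastically monotone system $(P_x:x\in\mathcal{A})$ on $\mathcal{S}$ that cannot be realized after mixing with the identity, following the style of Lemmas~\ref{y.prop.necessity} and~\ref{unique.diamond}. I will label the induced Y-poset $\mathcal{S}'=\{e,f,g,h\}$ with $e<f<g,h$ and $g\parallel h$ (the dual orientation is handled by an up--down symmetry argument), and the diamond $\mathcal{A}=\{a,b,c,d\}$ with $a<b,c<d$ and $b\parallel c$.

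I would first split on $|A\cap S'|$ and, within each case, on the precise placement of the shared element (if any) relative to the valency-1 or valency-3 vertices of $\mathcal{S}'$. By the $b\leftrightarrow c$ symmetry of $\mathcal{A}$ and the top--bottom symmetry available on $\mathcal{S}'$, the number of representative subcases is manageable. In each, the plan is to place the probability mass of $P_a,P_b,P_c,P_d$ on vertices of $\mathcal{S}'$ matching the Lemma~\ref{unique.diamond}(ii) pattern, namely
\[
P_a=\tfrac{1}{2}I_{\{e,f\}},\quad P_b=\tfrac{1}{2}I_{\{f,g\}},\quad P_c=\tfrac{1}{2}I_{\{f,h\}},\quad P_d=\tfrac{1}{2}I_{\{g,h\}},
\]
adjusted to incorporate the shared element when necessary. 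Stochastic monotonicity will be verified on up-sets of $\mathcal{S}$ restricted to $\{e,f,g,h\}$, which agree with up-sets of the induced $\mathcal{S}'$.

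The core calculation is to assume that $\xi_x$ realizes $\tilde{P}_x=\theta P_x+(1-\theta)I_x$ for some $\theta\in(0,1]$ and to exploit the branching of $\mathcal{S}'$ at $f$ together with the two paths from $a$ to $d$ in $\mathcal{A}$ to pin down the joint values of $(\xi_a,\xi_b,\xi_c,\xi_d)$. Since $g\parallel h$ and they only connect through $f$, the event $\{\xi_d=g\}$ should force $\xi_c=f$ (using that $h$ and $c$ are incomparable to $g$ in $\mathcal{S}$) and symmetrically $\{\xi_d=h\}$ should force $\xi_b=f$; combined with $\xi_a\leq\xi_b,\xi_c$, this should produce $\mathbb{P}(\xi_a=v)\geq\theta$ for some $v\in S'$ whose marginal $\tilde{P}_a(v)$ is strictly less than $\theta$, giving the desired contradiction.

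The hard part will be the subcases in which the shared element identifies a diamond middle with a valency-1 vertex of $\mathcal{S}'$, since there the branching vertex $f$ is no longer in a ``neutral'' position and the joint monotonicity constraints may be satisfiable by coupling through the shared vertex. For those subcases I would either enlarge the support of $(P_x)$ to include diamond vertices (adapting the construction of Lemma~\ref{unique.diamond}), or---where a direct construction proves stubborn---reduce to Remark~\ref{s4.remark} by using the uniqueness of $\mathcal{A}$ as the sole induced cyclic subposet to force an induced $\mathcal{S}_1$ or $\hat{\mathcal{S}}_4$ in $\mathcal{S}$.
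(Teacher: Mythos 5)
Your central construction does not work, and the gap shows up already in the simplest configuration where $A\cap S'=\varnothing$. With $P_a=\tfrac12 I_{\{e,f\}}$, $P_b=\tfrac12 I_{\{f,g\}}$, $P_c=\tfrac12 I_{\{f,h\}}$, $P_d=\tfrac12 I_{\{g,h\}}$, the system is in fact realizably monotone (even with $\theta=1$): take the identity map with probability $1-\theta$, the monotone map $(a,b,c,d)\mapsto(e,g,f,g)$ with probability $\theta/2$, and $(a,b,c,d)\mapsto(f,f,h,h)$ with probability $\theta/2$; all marginals of $\theta P_x+(1-\theta)I_x$ are matched. Your forcing argument reflects this: $\{\xi_d=g\}$ forces $\xi_c=f$ and $\{\xi_d=h\}$ forces $\xi_b=f$, but in either case the conclusion is only $\xi_a\le f$, i.e.\ $\xi_a\in\{e,f\}$, and $\mathbb{P}(\xi_a\in\{e,f\})\ge\theta$ is exactly consistent with $\tilde{P}_a(\{e,f\})=\theta$; no single value $v$ is pinned down with probability exceeding its marginal. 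So the proposed measures can never yield the contradiction, no matter how the case analysis on the shared element is organized.

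The missing idea is the one the paper uses: put the lower mass of the two middle measures at $e$ rather than at $f$, namely $P_a=\tfrac12 I_{\{e,f\}}$, $P_b=\tfrac12 I_{\{e,g\}}$, $P_c=\tfrac12 I_{\{e,h\}}$, $P_d=\tfrac12 I_{\{g,h\}}$. Then on the event $\{\xi_a=f\}$ (probability $\theta/2>0$ because $f\notin A$) monotonicity forces $\xi_b=g$ and $\xi_c=h$, whence $\xi_d$ must dominate both $g$ and $h$; this manufactures an induced diamond on $\{f,g,h,d\}$ distinct from $\mathcal{A}$, and the contradiction is \emph{structural}, against the hypothesis that $\mathcal{A}$ is the unique induced cyclic subposet --- not a numerical clash of marginals. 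Your plan never invokes the uniqueness hypothesis in the main argument, which is a sign the construction is off target. Finally, the case that must be sent to Remark~\ref{s4.remark} is the one where the shared element is the branching (valency-3) vertex $f$ of $\mathcal{S}'$: then $\mathcal{S}$ contains an induced $\mathcal{S}_1$, its dual, or $\hat{\mathcal{S}}_4$. That is where the reduction is needed, rather than in the valency-1 subcases you flag; once $f\notin A$ is secured, the single construction above (with the symmetry normalizations $h\notin A$, $c\notin S'$, and duality for the dual Y-poset) settles all remaining configurations at once.
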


\begin{proof}
Let $\mathcal{S}'$ be the Y-poset
labeled as in Figure~\ref{named.posets}(iii).
Observe that if $f \in A$
then $\mathcal{S}$ must have an induced subposet
of $\mathcal{S}_1$ or $\hat{\mathcal{S}}_4$,
and therefore, weak monotonicity equivalence fails;
see Remark~\ref{s4.remark}.
Thus, we assume that $f \not\in A$,
and that $h \not\in A$ and $c \not\in S'$ without loss of generality.
Construct the stochastically monotone system
\begin{equation*}
P_a := \frac{1}{2} I_{\{e,f\}}; \quad
P_b := \frac{1}{2} I_{\{e,g\}}; \quad
P_c := \frac{1}{2} I_{\{e,h\}}; \quad
P_d := \frac{1}{2} I_{\{g,h\}}.
\end{equation*}
Suppose that $\xi_x$ realizes $(\theta P_x + (1-\theta)I_x : x\in\mathcal{A})$.
Then
$\mathbb{P}(\xi_a = f,\, \xi_b = g,\, \xi_c = h) \ge \theta/2$
implies that $\mathcal{S}$ has the induced subposet of diamond
on $\{f,g,h,d\}$,
which contradicts the assumption of unique cyclic subposet in $\mathcal{S}$.
Hence, it cannot be realizably weakly monotone.
\end{proof}

A stochastically monotone
$(P_x: x\in\mathcal{A})$ can be extended to a system $(P_x: x\in\mathcal{S})$
by setting $P_x = P_d$ if either $b < x$ or $c < x$;
$P_x = P_a$ if neither $b \ge x$ nor $c \ge x$.
Then the resulting system
$(P_x: x\in\mathcal{S})$ of probability measures on $\mathcal{S}$
is stochastically monotone but not realizably weakly monotone
in each case presented in the proof
of Lemma~\ref{unique.diamond} and~\ref{no.yposet}.
Thus, together with Remark~\ref{s4.remark}
we have shown that it is necessary for $\mathcal{S}$ to be W-glued diamond
in Proposition~\ref{pra.conjecture}.

\subsection{Recursive inverse transforms for W-class}\label{rit}

In this subsection we discuss a construction of random monotone map
in preparation for the sufficiency proof of Proposition~\ref{pra.conjecture}.
We refer the readers to Section~2 of~\cite{classw}
for the proof of Proposition~\ref{map.prop} and further investigation
of recursive inverse transforms.

Let $W$ be a tree (i.e., an acyclic and connected graph),
and let $\tau$ be a maximal or a minimal element of $\mathcal{W}$.
We can introduce a partial ordering for vertices $x,y\in W$,
denoted by $x\le_{\tau}y$, if the path from $\tau$ to $x$ on $W$
contains the path from $\tau$ to $y$.
We call the top element $\tau$ the \emph{root},
and the poset the \emph{rooted tree}, denoted by $(W,\tau)$.
In Algorithm~\ref{rp.tree}
we construct a collection of rooted subtrees
$(W^{(\kappa)},u_1^{(\kappa)})$, $\kappa\in K$,
indexed by a rooted plane tree $(K,1)$.

\begin{algorithm}\label{rp.tree}
We set $(W^{(1)},u_1^{(1)}) = (W,\tau)$ as the rooted tree at the root index
$1$ and start with $K=\{1\}$.
Provided the rooted subtree $(W^{(\kappa)},u_1^{(\kappa)})$ at a leaf
$\kappa\in K$, we can pursue children of it recursively.
\begin{enumerate}
\renewcommand{\labelenumi}{(\roman{enumi})}
\item
In the first step we identify the unique path
\begin{equation}\label{u.path}
  \mathbf{u}^{(\kappa)} = (u_1^{(\kappa)},u_2^{(\kappa)},\ldots,u_*^{(\kappa)})
\end{equation}
until it hits the vertex $u_*^{(1)}$ with no successor or more than one
successor in the rooted tree $(W^{(\kappa)},u_1^{(\kappa)})$.
If $u_*^{(1)}$ has no successor, there is no children and $\kappa\in K$
is labeled as ``terminated,''
indicated by $C(\kappa)=\varnothing$.
Otherwise, we can find the collection
$\{u_1^{(\sigma_1)},\ldots,u_1^{(\sigma_M)}\}$
of successors
[that is, the collection of vertices that $u_*^{(\kappa)}$ covers
in the rooted tree $(S,\tau)$]
indexed by
\begin{equation}\label{plane.order}
  C(\kappa) = \{\sigma_1,\ldots,\sigma_M\},
\end{equation}
and enumerate them
from the first $\sigma_1$ to the last $\sigma_M$;
thus, introducing a linear ordering on $C(\kappa)$.
\item
In the second step
we extend $K$ to $K\cup C(\kappa)$,
where each successor $u_1^{(\sigma_i)}$ is identified with a leaf of a subtree component
$W^{(\sigma_i)}$ of the forest graph obtained from 
$W^{(\kappa)}$ by removing vertices in \eqref{u.path},
and $(W^{(\sigma_i)},u_1^{(\sigma_i)})$, $i=1,\ldots,M$,
become children of $(W^{(\kappa)},u_1^{(\kappa)})$.
\end{enumerate}
We identify all the rooted subtrees
by repeating these steps recursively until
all the terminal indices are labeled as ``terminated.''
\end{algorithm}

We now consider a poset $\mathcal{W}$ of Class W,
and identify the Hasse diagram $W$ with rooted tree $(W,\tau)$.
For each $x\in W$ we can define
the closed section $(\leftarrow,x] = \{z\in W: z\le_{\tau}x\}$
and the open section $(\leftarrow,x) = \{z\in W: z<_{\tau}x\}$
by the rooted tree $(W,\tau)$.
We can observe that each of these sections is either a down-set or an up-set
of $\mathcal{W}$.
For a measure $P$ (i.e., a nonnegative additive set function) on $W$
we can introduce distribution functions $F(x)$ and $F(x-)$ on $W$ respectively by
\begin{equation*}
  F(x) = P((\leftarrow,x])
    \mbox{ and }
  F(x-) = P((\leftarrow,x)) = F(x) - P(\{x\})
\end{equation*}
for $x\in W$.
$F$ is stochastically smaller than $F'$ on $\mathcal{W}$, denoted by $F\preceq F'$,
if
\begin{equation}\label{pdf.stocle}
  \begin{cases}
    F(x) \ge F'(x)
    & \mbox{if $(\leftarrow,x]$ is a down-set of $\mathcal{W}$; } \\
    F(x) \le F'(x)
    & \mbox{if $(\leftarrow,x]$ is an up-set of $\mathcal{W}$ }
  \end{cases}
\end{equation}
for each $x\in W$.
The comparability between $F$ and $F'$ implies a common mass on the whole space,
allowing us to generalizes the stochastic monotonicity for systems of measures
when $\mathcal{W}$ is acyclic.

For the rooted plane tree $(K,1)$ of indices
we can similarly define a distribution function $\mu(\kappa)$ on $K$ if it satisfies
\begin{equation*}
  \mu(\kappa)
  \ge\mu(\kappa-) = \sum_{\sigma_i\in C(\kappa)}\mu(\sigma_i)
  \mbox{ for each $\kappa\in K$, }
\end{equation*}
where the summation above is zero when $C(\kappa)=\varnothing$.
We say that $\mu$ is interlaced with a distribution function $F$ on $W$
if $\mu$ and $F$ satisfy $\mu(1)=F(\tau)$ and
\begin{equation}\label{interlace}
  \mu(\kappa-)\le F(u_*^{(\kappa)})\le F(u_1^{(\kappa)})\le\mu(\kappa)
\end{equation}
for each path $\mathbf{u}^{(\kappa)}$ of \eqref{u.path}.
The following construction of $\mu$ is easily verified.

\begin{lemma}\label{interlace.lem}
Let $F_a,F_d,F$ be distribution functions on $\mathcal{W}$.
If $F_a\preceq F\preceq F_d$ then
\begin{equation}
  \mu(\kappa)=\max\{F_{a}(u_1^{(\kappa)}),F_{d}(u_1^{(\kappa)})\},
  \quad\kappa\in K,
\end{equation}
is interlaced with $F$.
\end{lemma}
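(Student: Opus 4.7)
The plan is to verify each of the defining conditions for $\mu$ to be interlaced with $F$ in \eqref{interlace}; the distribution function property $\mu(\kappa) \ge \mu(\kappa-)$ then falls out by chaining them. The boundary equality $\mu(1) = F(\tau)$ is immediate because $u_1^{(1)} = \tau$ and $(\leftarrow,\tau] = W$ carries the full mass of each measure, so $F_a(\tau) = F(\tau) = F_d(\tau)$. Likewise, the middle inequality $F(u_*^{(\kappa)}) \le F(u_1^{(\kappa)})$ is immediate from the inclusion $(\leftarrow, u_*^{(\kappa)}] \subseteq (\leftarrow, u_1^{(\kappa)}]$ and monotonicity of the underlying measure.

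For the right envelope $F(u_1^{(\kappa)}) \le \mu(\kappa)$, I split by whether $(\leftarrow, u_1^{(\kappa)}]$ is a down-set or an up-set of $\mathcal{W}$. In the down-set case, $F_a \preceq F$ gives $F_a(u_1^{(\kappa)}) \ge F(u_1^{(\kappa)})$, hence $\mu(\kappa) \ge F_a(u_1^{(\kappa)}) \ge F(u_1^{(\kappa)})$; the up-set case is symmetric using $F \preceq F_d$.

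The main step is the left envelope $\mu(\kappa-) \le F(u_*^{(\kappa)})$, which is the one place the W-class hypothesis is essential. If $C(\kappa) = \varnothing$ there is nothing to prove, so assume $|C(\kappa)| \ge 2$. The exclusion of the Y-poset and its dual as induced subposets forces every Hasse neighbor of $u_*^{(\kappa)}$ --- the successors $u_1^{(\sigma_i)}$ together with the remaining Hasse neighbor of $u_*^{(\kappa)}$ on the path $\mathbf{u}^{(\kappa)}$ or toward $\tau$, if any --- to lie on a common side of $u_*^{(\kappa)}$ in $\mathcal{W}$, for otherwise an induced Y-poset (or its dual) would appear at $u_*^{(\kappa)}$. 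In the \emph{all above} case each child section $(\leftarrow, u_1^{(\sigma_i)}]$ is an up-set (its root's parent $u_*^{(\kappa)}$ sits below it in $\mathcal{W}$), so $\max\{F_a(u_1^{(\sigma_i)}), F_d(u_1^{(\sigma_i)})\} = F_d(u_1^{(\sigma_i)})$; the children sections are pairwise disjoint with union $(\leftarrow, u_*^{(\kappa)}] \setminus \{u_*^{(\kappa)}\}$, and $(\leftarrow, u_*^{(\kappa)}]$ itself is a down-set (either it equals $W$, or the remaining neighbor of $u_*^{(\kappa)}$ also lies above by the same-side property). Combining these,
\[
\mu(\kappa-) = P_d\bigl((\leftarrow, u_*^{(\kappa)}] \setminus \{u_*^{(\kappa)}\}\bigr)
\le P_d\bigl((\leftarrow, u_*^{(\kappa)}]\bigr)
= F_d(u_*^{(\kappa)})
\le F(u_*^{(\kappa)}),
\]
where the last step uses $F \preceq F_d$ on the down-set $(\leftarrow, u_*^{(\kappa)}]$. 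The \emph{all below} case is the dual argument with $F_a$ in place of $F_d$. The main obstacle here is precisely the structural identification of up-set/down-set type for each relevant section and the same-side property at the branching vertex; once these are in hand, the remaining measure-theoretic manipulation is routine.
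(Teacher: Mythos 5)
Your argument is correct; the paper offers no written proof here (it simply asserts the construction is ``easily verified''), and your verification is the natural one: the boundary and middle inequalities are immediate, the right envelope follows from the up-set/down-set dichotomy of the sections $(\leftarrow,x]$, and the left envelope uses exactly the W-class branching structure (a vertex with two or more tree-successors, plus the root being extremal, forces all Hasse neighbors of $u_*^{(\kappa)}$ to one side, so the child sections are all up-sets or all down-sets and $(\leftarrow,u_*^{(\kappa)}]$ has the complementary type), after which summing the disjoint child sections gives $\mu(\kappa-)\le F_d(u_*^{(\kappa)}-)\le F(u_*^{(\kappa)})$ or its dual. This matches what the paper intends the reader to check, so no discrepancies to report.
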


In what follows we consider a stochastically monotone system $(F_\alpha:\alpha\in\mathcal{A})$ of
distribution functions on $\mathcal{W}$,
and assume that there exists an interlaced distribution function $\mu$ on $K$
with all $F_\alpha$'s.
We set $\hat{W}^{(1)} = W$, and extend
$W^{(\kappa)}$ to $\hat{W}^{(\kappa)} = W^{(\kappa)}+u_*^{(\hat{\kappa})}$
if $\hat{\kappa}$ is the parent of $\kappa<_{\tau} 1$.
Similarly we extend the path $\mathbf{u}^{(\kappa)}$
by setting
$\hat{\mathbf{u}}^{(1)} =\mathbf{u}^{(1)}$ and
\begin{equation}\label{u.extend}
  \hat{\mathbf{u}}^{(\kappa)} =\mathbf{u}^{(\kappa)}+u_*^{(\hat{\kappa})}
  =(u_*^{(\hat{\kappa})},u_1^{(\kappa)},\ldots,u_*^{(\kappa)})
\end{equation}
to include the tail element
$u_*^{(\hat{\kappa})}$ of the parent path $\mathbf{u}^{(\hat{\kappa})}$
if $\kappa<_{\tau} 1$.
Then we can introduce a system $F^{(\kappa)}=(F_\alpha^{(\kappa)}:\alpha\in\mathcal{A})$
distribution functions on $\hat{W}^{(\kappa)}$ by setting
$F_\alpha^{(1)}=F_\alpha$ at $\kappa=1$ and
\begin{equation}\label{f.recursion}
  F_\alpha^{(\kappa)}(x) = \begin{cases}
    F_\alpha(x) & \mbox{if $x\in W^{(\kappa)}$;} \\
    \mu(\kappa) & \mbox{if $x=u_*^{(\hat{\kappa})}$}
  \end{cases}
\end{equation}
for $\kappa<_{\tau} 1$.
For each vertex $\sigma_j$ of the linearly ordered set
$C(\kappa) = \{\sigma_1,\ldots,\sigma_M\}$ of \eqref{plane.order}
we can introduce $\mu^{(\kappa)}\lceil\sigma_j\rceil$ and
$\mu^{(\kappa)}\lfloor\sigma_j\rfloor$ by
\begin{align}\label{lex.ceil}
  \mu^{(\kappa)}\lceil\sigma_j\rceil &= \sum_{i=1}^j \mu(\sigma_i);
  \\ \label{lex.floor}
  \mu^{(\kappa)}\lfloor\sigma_j\rfloor &= \sum_{i=1}^{j-1} \mu(\sigma_i)
\end{align}

Algorithm~\ref{map.alg} recursively constructs an inverse transform
$X_{\mu,\alpha}^{(\kappa)}$
from $[0,\mu(\kappa))$ to $\hat{W}^{(\kappa)}$,
allowing us to obtain Proposition~\ref{map.prop}.

\begin{algorithm}\label{map.alg}
If $C(\kappa)=\varnothing$ then $\mu(\kappa-)=0$.
Otherwise, we have the linearly ordered set $C(\kappa)$ of \eqref{plane.order}
for which the inverse transform $X_{\mu,\alpha}^{(\sigma_i)}$
from $[0,\mu(\sigma_i))$ to $\hat{W}^{(\sigma_i)}$ can be constructed by recursion.
We set for $\omega\in [0,\mu(\kappa-))$
\begin{equation}\label{map.recursion}
  X_{\mu,\alpha}^{(\kappa)}(\omega) = X_{\mu,\alpha}^{(\sigma_i)}(\omega-\mu^{(\kappa)}\lfloor\sigma_i\rfloor)
  \mbox{ if $\mu^{(\kappa)}\lfloor\sigma_i\rfloor\le\omega<\mu^{(\kappa)}\lceil\sigma_i\rceil$}
\end{equation}
for some $\sigma_i\in C(\kappa)$.
We extend the map $X_{\mu,\alpha}^{(\kappa)}$ from $[0,\mu(\kappa))$
to $\hat{W}^{(\kappa)}$ by setting for $\omega\in [\mu(\kappa-),\mu(\kappa))$,
\begin{equation}\label{map.inverse}
  X_{\mu,\alpha}^{(\kappa)}(\omega)
  = \bigwedge\nolimits_{\tau}\{x\in\hat{\mathbf{u}}^{(\kappa)}: \omega<F_\alpha^{(\kappa)}(x)\}
\end{equation}
which returns the minimum in the linearly ordered set
$(\hat{\mathbf{u}}^{(\kappa)},\le_{\tau})$ of \eqref{u.extend}.
\end{algorithm}

In the construction of $X_{\mu,\alpha}^{(\kappa)}$ in Algorithm~\ref{map.alg}
it should be noted that $X_{\mu,\alpha}^{(\kappa)}(\omega) = u_*^{(\kappa)}$
if and only if
\begin{equation}\label{map.i}
  \omega\in
  I_{\mu,\alpha}^{(\kappa)} =
  \left[\mu(\kappa-),F_\alpha(u_*^{(\kappa)})\right)
    \cup
    \bigcup_{\sigma_i\in C(\kappa)}\left\{
    \mu^{(\kappa)}\lfloor\sigma_i\rfloor +
       \left[F_\alpha(u_1^{(\sigma_i)}),\mu(\sigma_i)\right)
         \right\}
\end{equation}
where the union over $C(\kappa)$ is the empty set if $C(\kappa)=\varnothing$.

\begin{proposition}\label{map.prop}
The recursive inverse transform $X_{\mu,\alpha}^{(\kappa)}$ of Algorithm~\ref{map.alg} realizes
$F_\alpha^{(\kappa)}$ in the sense that
\begin{equation}\label{map.realize}
  \lambda\left(\left\{\omega:
  X_{\mu,\alpha}^{(\kappa)}(\omega)\in(\leftarrow,x]
  \right\}\right)
  = F_\alpha^{(\kappa)}(x),
  \quad x\in\hat{W}^{(\kappa)},
\end{equation}
where $\lambda$ is the Lebesgue measure on $[0,\mu(\kappa))$.
Furthermore, the system $(X_{\mu,\alpha}^{(\kappa)}:\alpha\in\mathcal{A})$
is monotone so that
\begin{equation}\label{map.monotone}
  X_{\mu,\alpha}^{(\kappa)}(\omega)\le X_{\mu,\beta}^{(\kappa)}(\omega)
  \mbox{ in $\hat{\mathcal{W}}^{(\kappa)}$ for any $\omega\in [0,\mu(\kappa))$}
\end{equation}
whenever $\alpha\le\beta$ in $\mathcal{A}$.
\end{proposition}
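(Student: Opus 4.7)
The approach is structural induction over the plane tree $(K,1)$, running from the leaves (those $\kappa\in K$ with $C(\kappa)=\varnothing$) back to the root $\kappa=1$, and at each $\kappa$ establishing \eqref{map.realize} and \eqref{map.monotone} simultaneously. Because the recursion \eqref{map.recursion} defines $X^{(\kappa)}_{\mu,\alpha}$ on $[0,\mu(\kappa-))$ purely from the child maps while \eqref{map.inverse} is the only new ingredient on the residual path slot $[\mu(\kappa-),\mu(\kappa))$, the inductive step reduces to a chain analysis on $\hat{\mathbf{u}}^{(\kappa)}$.

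The base case $C(\kappa)=\varnothing$ collapses $\hat{W}^{(\kappa)}$ to the chain $\hat{\mathbf{u}}^{(\kappa)}$ and the construction to the classical quantile rule on a linearly ordered set under $\le_\tau$. Realization \eqref{map.realize} then follows from a routine verification that the preimage of each $(\leftarrow,x]$ under $X^{(\kappa)}_{\mu,\alpha}$ equals $[\mu(\kappa-),F^{(\kappa)}_\alpha(x))$, where the interlacing bound $\mu(\kappa-)\le F^{(\kappa)}_\alpha(u_*^{(\kappa)})$ guarantees this is a legitimate interval.

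For the inductive step, I would partition $[0,\mu(\kappa))$ into the children slots $[\mu^{(\kappa)}\lfloor\sigma_i\rfloor,\mu^{(\kappa)}\lceil\sigma_i\rceil)$ of total length $\mu(\kappa-)$ together with the path slot. On each children slot $X^{(\kappa)}_{\mu,\alpha}$ coincides with a shifted copy of $X^{(\sigma_i)}_{\mu,\alpha}$ by \eqref{map.recursion}, so the inductive hypothesis supplies both realization over $\hat{W}^{(\sigma_i)}$ and monotonicity within $\hat{\mathcal{W}}^{(\sigma_i)}$, which is an induced subposet of $\hat{\mathcal{W}}^{(\kappa)}$. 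To assemble \eqref{map.realize} I treat two cases for $x\in\hat{W}^{(\kappa)}$: when $x\in W^{(\sigma_i)}\setminus\{u_*^{(\kappa)}\}$ only the $\sigma_i$-slot contributes and yields $F_\alpha(x)=F^{(\kappa)}_\alpha(x)$ by \eqref{f.recursion}; when $x$ lies on $\hat{\mathbf{u}}^{(\kappa)}$ the whole children region contributes (because every vertex in any $\hat{W}^{(\sigma_i)}$ satisfies $\le_\tau u_*^{(\kappa)}\le_\tau x$) and is supplemented by the path-slot contribution $F^{(\kappa)}_\alpha(x)-\mu(\kappa-)$, summing to $F^{(\kappa)}_\alpha(x)$. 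The explicit description \eqref{map.i} serves as a useful consistency check at $x=u_*^{(\kappa)}$, telescoping via the identity $P_\alpha(\{u_*^{(\kappa)}\})=F_\alpha(u_*^{(\kappa)})-\sum_i F_\alpha(u_1^{(\sigma_i)})$.

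The principal technical obstacle is monotonicity \eqref{map.monotone} on the path slot, because $\hat{\mathbf{u}}^{(\kappa)}$ is linearly ordered under $\le_\tau$ whereas $\hat{\mathcal{W}}^{(\kappa)}$ is only partially ordered, and the two need not coincide. Here I would invoke the W-class observation (noted just before Lemma~\ref{interlace.lem}) that every rooted section $(\leftarrow,x]$ is either a down-set or an up-set of $\mathcal{W}$; by \eqref{pdf.stocle} the sign of $F^{(\kappa)}_\alpha(x)-F^{(\kappa)}_\beta(x)$ along the chain is then determined by the direction of the Hasse edge joining $x$ to its tree-parent, an up-set at $x$ forcing the tree-parent to lie strictly below $x$ in $\mathcal{W}$ and a down-set forcing the reverse. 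If $X^{(\kappa)}_{\mu,\alpha}(\omega)=x_i$ and $X^{(\kappa)}_{\mu,\beta}(\omega)=x_j$ with the chain indexed from root-side to deep-side, the inequalities defining \eqref{map.inverse} combined with the $\le_\tau$-monotonicity of $F^{(\kappa)}_\beta$ along the chain force $F^{(\kappa)}_\alpha(x_m)<F^{(\kappa)}_\beta(x_m)$ for every $m$ strictly between $i$ and $j$; each such intervening section is therefore an up-set when $j>i$ and a down-set when $j<i$, yielding a chain of covering relations $x_i<x_{i+1}<\cdots<x_j$ (or its mirror) that delivers $x_i\le x_j$ in $\hat{\mathcal{W}}^{(\kappa)}$ by transitivity. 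Since each $\omega$ lies in a single slot, no cross-slot comparison is required, and the induction closes.
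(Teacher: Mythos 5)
The paper itself does not prove Proposition~\ref{map.prop}: it defers the argument to Section~2 of~\cite{classw}, so there is no in-paper proof to compare yours against. Judged on its own, your leaf-to-root induction over $(K,1)$ is sound and is essentially the intended route: the child slots are disposed of by the inductive hypothesis together with \eqref{f.recursion}, the path slot is a quantile transform along the $\le_\tau$-chain $\hat{\mathbf{u}}^{(\kappa)}$ (legitimate because interlacing gives $\mu(\kappa-)\le F_\alpha^{(\kappa)}(u_*^{(\kappa)})$ and $F_\alpha(u_1^{(\kappa)})\le\mu(\kappa)=F_\alpha^{(\kappa)}(u_*^{(\hat{\kappa})})$), and your monotonicity mechanism is exactly the right one: a rooted section $(\leftarrow,x]$ is an up-set or a down-set according to whether the tree-parent of $x$ lies below or above $x$, so a strict inequality between $F_\alpha^{(\kappa)}$ and $F_\beta^{(\kappa)}$ at a chain vertex fixes the direction of the Hasse edge to its parent, and the resulting covering relations chain together to give $X_{\mu,\alpha}^{(\kappa)}(\omega)\le X_{\mu,\beta}^{(\kappa)}(\omega)$ in $\hat{\mathcal{W}}^{(\kappa)}$. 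Two small corrections: the strict inequality must be secured for every $m$ with $\min(i,j)<m\le\max(i,j)$, not only ``strictly between'' $i$ and $j$; at the far endpoint it does hold, e.g.\ for $j>i$ one has $F_\alpha^{(\kappa)}(x_j)\le\omega<F_\beta^{(\kappa)}(x_j)$, and it is needed there to reach $x_j$ itself. Also note that no edge direction can be forced at the appended vertex $u_*^{(\hat{\kappa})}$, where every $F_\alpha^{(\kappa)}$ equals $\mu(\kappa)$; this is harmless since that vertex is never an intervening one. Finally, the up-set/down-set dichotomy of rooted sections holds for any tree Hasse diagram, not specifically for W-class; W-class enters only through the existence of the interlaced $\mu$, which the standing hypotheses preceding the proposition already assume.
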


In Proposition~\ref{map.prop}
we simply write $X_{\mu,\alpha}$ for the recursive inverse transform
$X_{\mu,\alpha}^{(1)}$ from $[0,\mu(1))$ to $\mathcal{W}$.
In the light of Proposition~\ref{map.prop}
we can generalize the realizable monotonicity for a system of measures
on an acyclic poset
if there exists a system $(X_{\mu,\alpha}:\alpha\in\mathcal{A})$ of maps
satisfying \eqref{map.realize}--\eqref{map.monotone} for $\mathcal{W}=\hat{\mathcal{W}}^{(1)}$
with $\kappa=1$.
Similarly we call $P$ (or $F$) realizably weakly monotone if a system
$(P + \theta I)$ of measures is realizably monotone for some $\theta\ge 0$.
The following result is an immediate corollary to Proposition~\ref{map.prop}.

\begin{corollary}\label{w.cor}
Suppose that a system $F=(F_\alpha:\alpha\in\mathcal{A})$ of distribution functions
is stochastically monotone on a poset $\mathcal{W}$ of W-class,
and that $\mathcal{A}$ has the minimum element $a$ and the maximum
element $d$.
Then the distribution $\mu$ of Lemma~\ref{interlace.lem} is interlaced with $F$,
and $(X_{\mu,\alpha}:\alpha\in\mathcal{A})$
is monotone and realizes $F$.
\end{corollary}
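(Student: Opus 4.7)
The plan is to chain Lemma~\ref{interlace.lem} with Proposition~\ref{map.prop}. Because $\mathcal{A}$ contains a minimum element $a$ and a maximum element $d$, the stochastic monotonicity of $F$ delivers the two-sided bound $F_a\preceq F_\alpha\preceq F_d$ for every $\alpha\in\mathcal{A}$. Applying Lemma~\ref{interlace.lem} with this bound, I conclude that the single distribution function
$$
  \mu(\kappa) = \max\{F_a(u_1^{(\kappa)}), F_d(u_1^{(\kappa)})\},
  \quad \kappa \in K,
$$
on the rooted plane tree $(K,1)$ is interlaced with $F_\alpha$ \emph{simultaneously} for all $\alpha\in\mathcal{A}$.

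With this common $\mu$ in hand, I feed it into Algorithm~\ref{map.alg} at the root index $\kappa=1$ to produce, for each $\alpha\in\mathcal{A}$, the recursive inverse transform $X_{\mu,\alpha}=X_{\mu,\alpha}^{(1)}$ from $[0,\mu(1))$ to $\mathcal{W}=\hat{W}^{(1)}$. Proposition~\ref{map.prop} applied at $\kappa=1$ then guarantees that each $X_{\mu,\alpha}$ realizes $F_\alpha^{(1)}=F_\alpha$ in the sense of~\eqref{map.realize}, and that the system $(X_{\mu,\alpha}:\alpha\in\mathcal{A})$ is monotone in the sense of~\eqref{map.monotone}. These are precisely the two assertions of the corollary.

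Since the statement is labeled a corollary, I do not expect a substantive obstacle; Proposition~\ref{map.prop} carries the weight of the argument. The only conceptual point worth flagging is that a single $\mu$ formed from the extremes $F_a, F_d$ remains simultaneously interlaced with every intermediate $F_\alpha$, but this is immediate from Lemma~\ref{interlace.lem}, whose conclusion depends on $F$ only through the two-sided bound $F_a\preceq F\preceq F_d$, which in turn holds uniformly in $\alpha$ by the assumption on the extremes of $\mathcal{A}$.
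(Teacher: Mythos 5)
Your proposal is correct and matches the paper's intent exactly: the paper states this result as an immediate consequence of Proposition~\ref{map.prop}, with the common interlacing $\mu$ supplied by Lemma~\ref{interlace.lem} applied to the extremal bound $F_a\preceq F_\alpha\preceq F_d$, which is precisely your argument. Your remark that $\mu$ depends only on $F_a$ and $F_d$ and hence is simultaneously interlaced with every $F_\alpha$ is the one point worth making explicit, and you made it.
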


\subsection{The proof of sufficiency}
\label{w.sec}

We continue the setting of Section~\ref{nec.conjecture},
and examine a monotonicity equivalence problem
for the sufficiency proof of Proposition~\ref{pra.conjecture}
when $\mathcal{S}$ is a W-glued diamond.
We start from the diamond $\mathcal{A}$ of Figure~\ref{named.posets}(i),
and glue it with four posets of W-class, say $\mathcal{W}_a$, $\mathcal{W}_b$,
$\mathcal{W}_c$, and $\mathcal{W}_d$,
and construct a W-glued diamond $\mathcal{S}$ in the following manner:
The diamond $\mathcal{A}$ is glued
(i) at $a$ with a minimal element of $\mathcal{W}_a$,
(ii) at $b$ with a maximal or minimal element of $\mathcal{W}_b$,
(iii) at $c$ with a maximal or minimal element of $\mathcal{W}_c$,
and (iv) at $d$ with a maximal element of $\mathcal{W}_d$.

In Lemma~\ref{y.me}
we consider the subposet $\mathcal{A}'$ of the diamond
$\mathcal{A}$ induced on $A'=\{a,b,d\}$,
and obtain a poset $\mathcal{Y}'$ of Y-class by gluing
$\mathcal{A}'$ at $b$ with a minimal element of
a poset $\mathcal{W}_b$ of W-class.

\begin{lemma}\label{y.me}
Let $\tilde{Q}=(\tilde{Q}_\alpha:\alpha\in\mathcal{A})$ be a system
of measures on $\mathcal{Y}'$.
Let $Q_\alpha=\tilde{Q}_\alpha$, $\alpha=b,d$, be supported by
$\hat{W}^{(\eta_b)}=W_b\cup\{d\}$,
and let
\begin{equation}\label{q.cons}
  Q_\alpha(y)=\begin{cases}
  \tilde{Q}_\alpha(a)+\tilde{Q}_\alpha(b) & \mbox{if $y=b$;}\\
  \tilde{Q}_\alpha(y) & \mbox{if $y\in \hat{W}^{(\eta_b)}\setminus\{b\}$}
  \end{cases}
\end{equation}
for $\alpha=a,c$
so that $Q=(Q_\alpha:\alpha\in\mathcal{A})$
becomes a system of measures on
the subposet $\hat{\mathcal{W}}^{(\eta_b)}$ induced by $\hat{W}^{(\eta_b)}$.
Suppose that $\tilde{Q}$ and  $Q$ are stochastically monotone
respectively on $\mathcal{Y}'$
and $\hat{\mathcal{W}}^{(\eta_b)}$.
Then there exists a modified system of recursive inverse transforms
which is monotone in $\mathcal{Y}'$ and realizes $\tilde{Q}$.
\end{lemma}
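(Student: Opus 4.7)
The plan is to realize $\tilde{Q}$ on $\mathcal{Y}'$ by starting from the recursive inverse transforms of Proposition~\ref{map.prop} applied to the contracted system $Q$ on $\hat{\mathcal{W}}^{(\eta_b)}$, and then splitting a suitable portion of the preimage of $b$ back into $a$ for the indices $\alpha=a,c$.

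First I would invoke Corollary~\ref{w.cor} on $\hat{\mathcal{W}}^{(\eta_b)}=\mathcal{W}_b\cup\{d\}$, which is W-class since $\mathcal{W}_b$ is W-class and $d$ is attached above the minimal element $b$ without creating any induced Y-poset, its dual, or bowtie. Using the interlaced distribution $\mu$ of Lemma~\ref{interlace.lem} built from $Q_a$ and $Q_d$, one obtains a monotone family $(X^{(\eta_b)}_{\mu,\alpha}:\alpha\in\mathcal{A})$ on $[0,\mu(\eta_b))$ realizing $Q$. For each $\alpha$ let
\[
  B_\alpha \;=\; \bigl\{\omega : X^{(\eta_b)}_{\mu,\alpha}(\omega)=b\bigr\},
\]
so $\lambda(B_\alpha)=Q_\alpha(b)$. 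Since $b$ is minimal in $\hat{\mathcal{W}}^{(\eta_b)}$, the monotonicity $X^{(\eta_b)}_{\mu,a}\le X^{(\eta_b)}_{\mu,c}$ forces $X^{(\eta_b)}_{\mu,a}(\omega)=b$ whenever $X^{(\eta_b)}_{\mu,c}(\omega)=b$, i.e.\ $B_c\subseteq B_a$.

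Second I would exploit the stochastic monotonicity of $\tilde{Q}$ on $\mathcal{Y}'$: because $a$ is a minimal element of $\mathcal{Y}'$ the singleton $\{a\}$ is a down-set, whence $\tilde{Q}_c(a)\le\tilde{Q}_a(a)$. Combined with $B_c\subseteq B_a$ this lets me pick Borel subsets $B_c^{*}\subseteq B_c$ and $B_a^{*}$ with $B_c^{*}\subseteq B_a^{*}\subseteq B_a$, $\lambda(B_a^{*})=\tilde{Q}_a(a)$, and $\lambda(B_c^{*})=\tilde{Q}_c(a)$. Define the modified system by setting $\xi_\alpha(\omega)=a$ on $B_\alpha^{*}$ and $\xi_\alpha(\omega)=X^{(\eta_b)}_{\mu,\alpha}(\omega)$ otherwise for $\alpha\in\{a,c\}$, and $\xi_\beta = X^{(\eta_b)}_{\mu,\beta}$ for $\beta\in\{b,d\}$.

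Finally I would verify marginals and monotonicity. The marginals follow from \eqref{q.cons}: for $\alpha\in\{a,c\}$ the mass at $a$ becomes $\lambda(B_\alpha^{*})=\tilde{Q}_\alpha(a)$, the mass at $b$ drops to $Q_\alpha(b)-\tilde{Q}_\alpha(a)=\tilde{Q}_\alpha(b)$, and all other values agree with $\tilde{Q}_\alpha$; the $\beta=b,d$ marginals come directly from Proposition~\ref{map.prop}. For the monotonicity $\xi_\alpha\le\xi_\beta$ in $\mathcal{Y}'$ whenever $\alpha\le\beta$ in $\mathcal{A}$, I split into cases. On $B_\alpha^{*}$ the equality $X^{(\eta_b)}_{\mu,\alpha}(\omega)=b$ and monotonicity of the original transforms force $X^{(\eta_b)}_{\mu,\beta}(\omega)\ge b$ in $\hat{\mathcal{W}}^{(\eta_b)}$, and every such element is strictly above $a$ in $\mathcal{Y}'$ because $a<b$ there; for the pair $a\le c$ in particular, the inclusion $B_c^{*}\subseteq B_a^{*}$ ensures that $\xi_c=a$ implies $\xi_a=a$. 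Outside $B_\alpha^{*}$ the inequality reduces to the monotonicity of the unmodified transforms on $\hat{\mathcal{W}}^{(\eta_b)}$, which transfers to $\mathcal{Y}'$ since $\hat{\mathcal{W}}^{(\eta_b)}$ is an induced subposet. The main obstacle is the simultaneous construction of the nested subsets $B_c^{*}\subseteq B_a^{*}\subseteq B_a$ with the prescribed Lebesgue measures, which hinges precisely on the two minimality facts: the minimality of $b$ in $\hat{\mathcal{W}}^{(\eta_b)}$ gives the inclusion $B_c\subseteq B_a$, and the minimality of $a$ in $\mathcal{Y}'$ converts stochastic monotonicity into the mass inequality $\tilde{Q}_c(a)\le\tilde{Q}_a(a)$.
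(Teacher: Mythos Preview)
Your argument is correct and follows the same overall strategy as the paper: realize the contracted system $Q$ on $\hat{\mathcal{W}}^{(\eta_b)}$ by the recursive inverse transforms of Corollary~\ref{w.cor}, and then reassign an appropriate portion of the event $\{X_{\mu,\alpha}^{(\eta_b)}=b\}$ to the value $a$ for $\alpha\in\{a,c\}$, using the nesting $B_c\subseteq B_a$ (from minimality of $b$ in $\hat{\mathcal{W}}^{(\eta_b)}$) together with $\tilde{Q}_c(a)\le\tilde{Q}_a(a)$ (from minimality of $a$ in $\mathcal{Y}'$).

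The only substantive difference is in the execution of the ``splitting'' step. You invoke an abstract choice of nested Borel sets $B_c^{*}\subseteq B_a^{*}\subseteq B_a$ with the prescribed Lebesgue measures, which is perfectly adequate for the lemma as stated. The paper instead writes down the modification sets $\tilde{I}_{\mu,\alpha}^{(\eta_b)}$ explicitly, via the auxiliary quantities $\gamma_\alpha$ and $\tilde{F}_\alpha^{(\sigma_i)}(b)$, so that the reassigned region is a concrete finite union of subintervals of $I_{\mu,\alpha}^{(\eta_b)}$ in~\eqref{map.i}. The payoff of that extra bookkeeping is portability: the very same formulas are quoted verbatim in the proofs of Lemma~\ref{w.lemma}, Theorem~\ref{w.theorem}, and Proposition~\ref{w.strassen}, where the $\eta_b$- and $\eta_c$-branches must be modified in a parallel, explicitly compatible way. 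Your abstract argument would still go through there, but one would have to redo the nested-subset selection separately in each branch rather than simply citing~\eqref{i.mod}.
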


\begin{proof}
By Algorithm~\ref{rp.tree} we can start from the rooted tree $(\hat{W}^{(\eta_b)},d)$,
and recursively construct $(W^{(\kappa)},u_1^{(\kappa)})$, $\kappa\in K_b$,
along with a rooted plane tree $(K_b,\eta_b)$ of indices.
Then a distribution $\mu$ of Lemma~\ref{interlace.lem} is interlaced with
the system $F^{(\eta_b)}=(F^{(\eta_b)}_\alpha:\alpha\in\mathcal{A})$
of distribution functions corresponding to $Q$,
and by Algorithm~\ref{map.alg}
we can introduce a monotone system $(X_{\mu,\alpha}^{(\eta_b)}:\alpha\in\mathcal{A})$
of recursive inverse transforms from $[0,\mu(\eta_b))$ to $\hat{\mathcal{W}}^{(\eta_b)}$ which
realizes $F^{(\eta_b)}$.
Then we modify it and obtain $\tilde{X}_{\mu,\alpha}^{(\eta_b)}$ mapping to
$\mathcal{Y}'$ as follows:
First we choose
\begin{equation*}
  \gamma_a = \begin{cases}
    \max\{\tilde{Q}_a(a)-F^{(\eta_b)}_c(b)+\mu(\eta_b-),0\} \\
    & \hspace{-12ex}\mbox{if $\tilde{Q}_a(a)\le F^{(\eta_b)}_c(b)-F^{(\eta_b)}_c(b-)$;}\\
    \mu(\eta_b-)-F^{(\eta_b)}_a(b-)
    & \hspace{-12ex}\mbox{otherwise,}
  \end{cases}
\end{equation*}
so that the interval
\begin{equation}\label{a.int}
  \mu(\eta_b-)+[-\gamma_a,\tilde{Q}_a(a)-\gamma_a)
\end{equation}
is contained in $[F^{(\eta_b)}_c(b-),F^{(\eta_b)}_c(b))$
if $\tilde{Q}_a(a)\le F^{(\eta_b)}_c(b)-F^{(\eta_b)}_c(b-)$;
otherwise, \eqref{a.int} contains $[F^{(\eta_b)}_c(b-),F^{(\eta_b)}_c(b))$.
Then set $\gamma_b = \gamma_d = 0$ and
\begin{equation*}
  \gamma_c = \max\{\tilde{Q}_c(a)-F^{(\eta_b)}_c(b)+\mu(\eta_b-),0\}
\end{equation*}
in such a way that
\eqref{a.int} contains the interval
\begin{equation*}
  \mu(\eta_b-)+[-\gamma_c,\tilde{Q}_c(a)-\gamma_c).
\end{equation*}
For each $\sigma_i\in C(\eta_b)$
we define
$\tilde{F}_b^{(\sigma_i)}(b) = \tilde{F}_d^{(\sigma_i)}(b) = \mu(\sigma_i)$
and
\begin{align*}
  \tilde{F}_a^{(\sigma_i)}(b)
  &= \mu(\sigma_i) -
  \frac{\gamma_a[\mu(\sigma_i)-F^{(\eta_b)}_a(u_1^{(\sigma_i)})]}{\mu(\eta_b-)-F^{(\eta_b)}_a(b-)}
  \\
  \tilde{F}_c^{(\sigma_i)}(b)
  &= \tilde{F}_a^{(\sigma_i)}(b)
  + \dfrac{(\gamma_a-\gamma_c)[\mu(\sigma_i)-\tilde{F}_a^{(\sigma_i)}(b)]}{\gamma_a} .
\end{align*}
so that
\begin{equation*}
  \sum_{\sigma_i\in C(\eta_b)}[\mu(\sigma_i)-\tilde{F}_\alpha^{(\sigma_i)}(b)]
  = \gamma_\alpha
  \mbox{ for $\alpha\in\mathcal{A}$.}
\end{equation*}
For each $\alpha\in\mathcal{A}$
we set
\begin{equation*}
  \tilde{F}_\alpha^{(\eta_b)}(a)=\mu(\eta_b-)+\tilde{Q}_{\alpha}(a)-\gamma_\alpha ,
\end{equation*}
and modify $X_{\mu,\alpha}^{(\eta_b)}$
by replacing $X_{\mu,\alpha}^{(\eta_b)}(\omega) = b$ with
$\tilde{X}_{\mu,\alpha}^{(\eta_b)}(\omega) = a$ if
\begin{equation}\label{i.mod}
  \omega\in\tilde{I}_{\mu,\alpha}^{(\eta_b)}
  = \left[\mu(\eta_b-),\tilde{F}_\alpha^{(\eta_b)}(a)\right)
  \cup
  \bigcup_{\sigma_i\in C(\eta_b)}\left\{
  \mu^{(\eta_b)}\lfloor\sigma_i\rfloor +
     \left[\tilde{F}_\alpha^{(\sigma_i)}(b),\mu(\sigma_i)\right)
       \right\}.
\end{equation}
Since
$\tilde{I}_{\mu,b}^{(\eta_b)}=\tilde{I}_{\mu,d}^{(\eta_b)}=\varnothing$,
we only change $X_{\mu,a}^{(\eta_b)}$ and $X_{\mu,c}^{(\eta_b)}$.
The modified system $(\tilde{X}_{\mu,\alpha}^{(\eta_b)}:\alpha\in\mathcal{A})$
clearly realizes $\tilde{Q}$.
Since
\begin{math}
  \tilde{I}_{\mu,c}^{(\eta_b)}\subseteq\tilde{I}_{\mu,a}^{(\eta_b)},
\end{math}
it satisfies
\begin{equation*}
  a=\tilde{X}_{\mu,a}^{(\eta_b)}(\omega)=\tilde{X}_{\mu,c}^{(\eta_b)}(\omega)
  < X_{\mu,a}^{(\eta_b)}(\omega)=b
  \le\tilde{X}_{\mu,b}^{(\eta_b)}(\omega)\le\tilde{X}_{\mu,d}^{(\eta_b)}(\omega)
  \mbox{ if $\omega\in\tilde{I}_{\mu,c}^{(\eta_b)}$;}
\end{equation*}
otherwise,
\begin{equation*}
  \tilde{X}_{\mu,a}^{(\eta_b)}(\omega)
  \le X_{\mu,a}^{(\eta_b)}(\omega)
  \le\tilde{X}_{\mu,\alpha}^{(\eta_b)}(\omega)
  \le\tilde{X}_{\mu,d}^{(\eta_b)}(\omega),
  \quad
  \alpha=b,c;
\end{equation*}
thus, it is monotone.
\end{proof}

In Lemma~\ref{w.lemma}
we consider a W-glued diamond $\tilde{\mathcal{S}}$ consisting of the diamond
$\mathcal{A}$ glued at $b$ with a minimal element of $\mathcal{W}_b$
and at $c$ with a minimal element of $\mathcal{W}_c$.

\begin{lemma}\label{w.lemma}
Let $P=(P_\alpha: \alpha\in\mathcal{A})$ be a stochastically monotone system of
measures on $\tilde{\mathcal{S}}$.
If $P_a(d) = P_d(a) = 0$ then it is realizably weakly monotone.
\end{lemma}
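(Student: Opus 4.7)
The plan is to decompose $P$ along the two Y-class arms of the diamond and apply Lemma~\ref{y.me} (and its dual, obtained by reversing the poset orientation) to each arm separately, combining the resulting monotone realizations on a disjoint union of sample spaces. Let $\mathcal{Y}_b=\{a,d\}\cup\mathcal{W}_b$ and $\mathcal{Y}_c=\{a,d\}\cup\mathcal{W}_c$ denote the two Y-class induced subposets of $\tilde{\mathcal{S}}$, sharing only $\{a,d\}$.

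First I would seek an additive split $P_\alpha=\tilde{Q}^b_\alpha+\tilde{Q}^c_\alpha$ with $\tilde{Q}^b_\alpha$ supported on $\mathcal{Y}_b$ and $\tilde{Q}^c_\alpha$ on $\mathcal{Y}_c$, assigning $P_\alpha(\mathcal{W}_b)$ and $P_\alpha(\mathcal{W}_c)$ to the respective arm and splitting $P_\alpha(a)$ and $P_\alpha(d)$ by parameters $s_\alpha\in[0,P_\alpha(a)]$ and $r_\alpha\in[0,P_\alpha(d)]$. To fit the support hypotheses of Lemma~\ref{y.me} I would impose $\tilde{Q}^b_\beta(a)=0$ for $\beta=b,d$ on the $b$-arm and, using the dual orientation on the $c$-arm, $\tilde{Q}^c_\beta(d)=0$ for $\beta=a,c$; the hypothesis $P_a(d)=P_d(a)=0$ makes the diagonal requirements $\tilde{Q}^b_d(a)=0$ and $\tilde{Q}^c_a(d)=0$ automatic. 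Parametrizing by the common $b$-side total mass $m_b$, which is forced to be constant over $\alpha$ since every pair in $\mathcal{A}$ is comparable under the diamond order, the remaining parameters are pinned by linear equations such as $s_a=m_b-P_a(\mathcal{W}_b)$, $r_b=m_b-P_b(\mathcal{W}_b)$, $s_c=m_b-P_c(\mathcal{W}_b)-P_c(d)$, $r_d=m_b-P_d(\mathcal{W}_b)$, reducing feasibility to the non-emptiness of a single interval $[L,U]$ of admissible $m_b$.

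The main obstacle is to show $L\le U$. Each constituent inequality compares $P$-masses of sets such as $\mathcal{W}_b$, $\mathcal{W}_b\cup\{d\}$, and $\{a\}\cup\mathcal{W}_b\cup\{d\}$, and I would rewrite both sides as $P$-masses of genuine up-sets of $\tilde{\mathcal{S}}$ (such as $\mathcal{W}_b\cup\{d\}$ or $\mathcal{W}_b\cup\mathcal{W}_c\cup\{d\}$), so that the stochastic monotonicity of $P$ can be invoked directly; the vanishing of $P_a(d)$ and $P_d(a)$ is used repeatedly to absorb the terms that would otherwise obstruct the comparisons (for instance, $P_a(\mathcal{W}_b)=P_a(\mathcal{W}_b\cup\{d\})\le P_b(\mathcal{W}_b\cup\{d\})$ handles several bounds simultaneously). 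When the interval is too tight for the original $P$ (as happens at the boundary case $P=I$, whose identity realization $\xi_\alpha=\alpha$ refuses to decompose across the two arms because $b$ and $c$ then sit on different arms on the same sample point), the weak-monotonicity freedom to replace $P$ by $\theta P+(1-\theta)I$ enlarges the admissible interval, and the $(1-\theta)I$ component is handled directly by an identity realization on its own slice of the sample space.

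With the split in hand, I would verify that $\tilde{Q}^b$ (resp.\ $\tilde{Q}^c$) is stochastically monotone on $\mathcal{Y}_b$ (resp.\ $\mathcal{Y}_c$) and that its consolidation $Q^b$ merging $a$'s mass into $b$ on $\mathcal{W}_b\cup\{d\}$ (resp.\ $Q^c$ merging $d$'s mass into $c$ on $\mathcal{W}_c\cup\{a\}$) is also stochastically monotone, by the same translation of arm-level up-sets into $\tilde{\mathcal{S}}$-up-sets. Lemma~\ref{y.me} and its dual then yield monotone random maps $\tilde{X}^b:\mathcal{A}\to\mathcal{Y}_b$ on $[0,m_b)$ and $\tilde{X}^c:\mathcal{A}\to\mathcal{Y}_c$ on $[0,m_c)$ realizing $\tilde{Q}^b$ and $\tilde{Q}^c$. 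Combining them on the disjoint union $[0,m_b)\sqcup[0,m_c)$ (with an additional identity-realization slice if $\theta<1$ was needed), and regarding $\mathcal{Y}_b,\mathcal{Y}_c$ as subposets of $\tilde{\mathcal{S}}$, produces a monotone random map $\xi:\mathcal{A}\to\tilde{\mathcal{S}}$ whose marginal at $\alpha$ equals $\tilde{Q}^b_\alpha+\tilde{Q}^c_\alpha$, which is $P_\alpha$ or $\theta P_\alpha+(1-\theta)I_\alpha$ as appropriate, establishing the realizable weak monotonicity of $P$.
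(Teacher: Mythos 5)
Your overall architecture --- an arm-$b$ slice, an arm-$c$ slice, and an identity slice, with the Lemma~\ref{y.me} machinery applied on each arm --- is exactly the shape of the paper's construction, but the step everything rests on, namely the existence of an exact additive split $P_\alpha=\tilde{Q}^b_\alpha+\tilde{Q}^c_\alpha$ with a common arm mass $m_b$, fails in general, and your proposed repair does not close the gap. The two constraints you must reconcile for the \emph{incomparable} indices $b$ and $c$ are not linked by any stochastic comparison: whatever orientation you choose on the $c$-arm, feasibility forces $P_b(W_b)\le m_b\le P_c(W_b\cup\{a,d\})$, and already for $P_\alpha=I_\alpha$ (or any $P$ with $P_b$ essentially supported on $W_b$ and $P_c$ on $W_c$) this gives $1\le m_b\le 0$, so your interval $[L,U]$ is empty. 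Passing to $\theta P+(1-\theta)I$ and giving the $(1-\theta)I$ part its own slice does not help: the part that must still be split across the arms is $\theta P$ (plus possibly further identity mass), and by homogeneity the same obstruction persists --- extra identity mass only increases the diagonal masses at $b\in W_b$ and $c\in W_c$ that push the two arm masses apart. What is actually needed, and what the paper does, is to let the arm systems differ from $P$ by a possibly \emph{negative} multiple of $I$: it fixes the common masses $\tilde{q}'=P_c(W_b\cup\{a,d\})$ and $\tilde{q}''=P_b(W_c\cup\{a,d\})$, redefines the masses at $b,c,a,d$ to whatever completes those totals, verifies $\tilde{Q}'+\tilde{Q}''=P+(\tilde{q}-p)I$, and only afterwards compensates on the identity slice with $\theta=\max\{p-\tilde{q},0\}$, so that the realization produces $P+\theta^*I$. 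Realizable weak monotonicity supplies exactly this two-sided slack; your scheme uses it on one side only, which is why the boundary case you yourself flag already defeats it.

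Two further points. In Lemma~\ref{w.lemma} both $\mathcal{W}_b$ and $\mathcal{W}_c$ are glued at \emph{minimal} elements, so the correct tool on the $c$-arm is Lemma~\ref{y.me} itself with $b$ and $c$ interchanged (support condition: no mass at $a$ for the indices $c,d$, with the $a$-mass of the indices $a,b$ consolidated into $c$), not its dual; the dual, and with it your condition $\tilde{Q}^c_a(d)=\tilde{Q}^c_c(d)=0$, is only relevant in the variant mentioned after the lemma where $\mathcal{W}_c$ is glued at a maximal element. Also, $b$ and $c$ are incomparable in the diamond, so the common arm mass is not ``forced by comparability''; it is required because the arm realization of Lemma~\ref{y.me} lives on a single interval, i.e.\ because the maps for all four indices share one sample space. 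Finally, even granted a split, you would still need to check the second hypothesis of Lemma~\ref{y.me} (stochastic monotonicity of the consolidated system on $\hat{\mathcal{W}}^{(\eta_b)}$), which you assert but do not verify.
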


\begin{proof}
Let $p$ be the common mass of $P$ [i.e., $p=P_\alpha(\tilde{S})$ for all $\alpha\in\mathcal{A}$],
and let $\mathcal{Y}'$ and $\mathcal{Y}''$ be the subposet of Y-class
induced respectively on $W_b\cup\{a,d\}$ and $W_c\cup\{a,d\}$.
Then we can construct a stochastically monotone system
$\tilde{Q}'=(\tilde{Q}'_\alpha: \alpha\in\mathcal{A})$ of measures on $\mathcal{Y}'$
with common mass $\tilde{q}'=P_c(Y')$
by setting
\begin{align*}
\tilde{Q}'_a(y) & = \begin{cases}
  \tilde{q}' - P_a(W_b) & \mbox{ if $y = a$; } \\
  P_a(b) & \mbox{ if $y\in W_b$; } \\
  0 & \mbox{ if $y = d$, }
\end{cases} \\
\tilde{Q}'_b(y) & = \begin{cases}
  \tilde{q}' - P_b(W_b\setminus\{b\}) & \mbox{ if $y = b$; } \\
  P_b(y) & \mbox{ if $y\in W_b\setminus\{b\}$; } \\
  0 & \mbox{ if $y = a$ or $d$, } \\
\end{cases} \\
\tilde{Q}'_c(y) & = P_c(y), \\
\tilde{Q}'_d(y) & = \begin{cases}
  0 & \mbox{ if $y = a$; } \\
  P_d(b) & \mbox{ if $y\in W_b$; } \\
  \tilde{q}' - P_d(W_b) & \mbox{ if $y = d$. } \\
\end{cases}
\end{align*}
By interchanging $b$ with $c$ in the above construction,
we obtain the stochastically monotone system
$\tilde{Q}''=(\tilde{Q}''_\alpha: \alpha\in\mathcal{A})$ of measures on $\mathcal{Y}''$
for which $\tilde{q}''=P_b(Y'')$ is the common mass of $\tilde{Q}''$.
Let $I = (I_\alpha:\alpha\in\mathcal{A})$ be the system of unit masses.
By viewing $\tilde{Q}=\tilde{Q}'+\tilde{Q}''$ as a system of measures on $\tilde{\mathcal{S}}$
we obtain
\begin{equation*}
  \tilde{Q} + \theta I = P + \theta^* I
\end{equation*}
with $\theta = \max\{p-\tilde{q},0\}$
and $\theta^* = \max\{\tilde{q}-p,0\}$,
where $\tilde{q}=\tilde{q}'+\tilde{q}''$ is the common mass of $\tilde{Q}$.

Let $\tilde{\mathcal{W}}$ be the poset of W-class
induced on $\tilde{S}\setminus\{a\}$,
and let $\hat{\mathcal{W}}^{(\eta_b)}$ and $\hat{\mathcal{W}}^{(\eta_c)}$
be the subposets respectively induced on $W_b\cup\{d\}$ and $W_c\cup\{d\}$ and rooted at $d$.
Then we can introduce
the stochastically monotone systems $F^{(\eta_b)}$ and $F^{(\eta_c)}$ of distribution functions
respectively on $\hat{\mathcal{W}}^{(\eta_b)}$ and $\hat{\mathcal{W}}^{(\eta_c)}$
by modifying the respective systems $\tilde{Q}'$ and $\tilde{Q}''$ similarly to Lemma~\ref{y.me}.
We combine them by
\begin{equation}\label{f.on.w}
  F_\alpha(y) = \begin{cases}
    F_\alpha^{(\eta_b)}(y) & \mbox{if $y\in W_b$;} \\
    F_\alpha^{(\eta_c)}(y) & \mbox{if $y\in W_c$;} \\
    \tilde{q}+\theta & \mbox{if $y=d$,}
  \end{cases}
\end{equation}
and create a stochastically monotone system $F$ on $\tilde{\mathcal{W}}$.
Along with the interlacing distribution $\mu$ of Lemma~\ref{interlace.lem},
we can construct the recursive inverse transform
\begin{equation*}
  X_{\mu,\alpha}(\omega) = \begin{cases}
    X_{\mu,\alpha}^{(\eta_b)}(\omega)
    & \mbox{if $0\le\omega<\mu^{(1)}\lceil\eta_b\rceil$;}\\
    X_{\mu,\alpha}^{(\eta_c)}(\omega-\mu^{(1)}\lfloor\eta_c\rfloor)
    & \mbox{if $\mu^{(1)}\lfloor\eta_c\rfloor\le\omega<\mu^{(1)}\lceil\eta_c\rceil$;}\\
    d
    & \mbox{if $\mu(1-)\le\omega<\mu(1)$,}
  \end{cases}
\end{equation*}
where $C(1)=\{\eta_b,\eta_c\}$ is linearly ordered.
Here we find
$\mu^{(1)}\lceil\eta_b\rceil=\mu^{(1)}\lfloor\eta_c\rfloor=\mu(\eta_b)=q'$,
$\mu^{(1)}\lceil\eta_c\rceil=\mu(1-)=\tilde{q}$,
and
$\mu(1)=\tilde{q}+\theta$.
Exactly in the same manner to the proof of Lemma~\ref{y.me}
we can introduce the interval $I_{\mu,\alpha}^{(\eta_b)}$ by \eqref{i.mod}
and $I_{\mu,\alpha}^{(\eta_c)}$ by replacing $b$ with $c$,
and modify $X_{\mu,\alpha}^{(\eta_b)}$ and $X_{\mu,\alpha}^{(\eta_c)}$
by setting
(i) $\tilde{X}_{\mu,\alpha}^{(\eta_b)}(\omega)=a$ if $\omega\in I_{\mu,\alpha}^{(\eta_b)}$,
and (ii)
$\tilde{X}_{\mu,\alpha}^{(\eta_c)}(\omega)=a$ if $\omega\in I_{\mu,\alpha}^{(\eta_c)}$.
Furthermore, we set $\tilde{X}_{\mu,\alpha}(\omega)=\alpha$
if $\tilde{q}\le\omega<\tilde{q}+\theta$.
The resulting system
$(\tilde{X}_{\mu,\alpha}:\alpha\in\mathcal{A})$ is monotone on $\tilde{\mathcal{S}}$
and realizes $\tilde{Q}+\theta I$.
\end{proof}

The result of Lemma~\ref{w.lemma}
holds true when the diamond $\mathcal{A}$ is
glued at $b$ and $c$
respectively with a minimal or a maximal element of $\mathcal{W}_b$ and $\mathcal{W}_c$.
In Theorem~\ref{w.theorem}
we extend the poset $\tilde{\mathcal{S}}$ of Lemma~\ref{w.lemma}
to a W-glued diamond $\mathcal{S}$
further glued at $a$ with a minimal element of $\mathcal{W}_a$
and at $d$ with a maximal element of $\mathcal{W}_d$.

\begin{theorem}\label{w.theorem}
If $P=(P_\alpha:\alpha\in\mathcal{A})$ is a stochastically monotone system of measures on
$\mathcal{S}$, it is realizably weakly monotone.
\end{theorem}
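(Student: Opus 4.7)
The plan is to decompose $P$ into three stochastically monotone pieces: branch measures $R^a$ on $\mathcal{W}_a$ and $R^d$ on $\mathcal{W}_d$ with suitably chosen common masses, and a residual $P^{\mathrm{rem}}$ on $\tilde{\mathcal{S}}$. The branches will be realized by Corollary~\ref{w.cor} and the residual by Lemma~\ref{w.lemma}, after which concatenation on a common sample space yields a monotone realization of $P+\theta I$ on $\mathcal{S}$, establishing realizable weak monotonicity of $P$.

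\textbf{Branch measures.} Set $c_a=P_d(\mathcal{W}_a)$ and $c_d=P_a(\mathcal{W}_d)$. Because $\mathcal{W}_a$ is a down-set of $\mathcal{S}$ and $\mathcal{W}_d$ is an up-set, stochastic monotonicity gives $P_\alpha(\mathcal{W}_a)\ge c_a$ and $P_\alpha(\mathcal{W}_d)\ge c_d$ for every $\alpha\in\mathcal{A}$. Define $R^a_\alpha$ on $\mathcal{W}_a$ by $R^a_\alpha(x)=P_\alpha(x)$ for $x\in\mathcal{W}_a\setminus\{a\}$ and $R^a_\alpha(a)=c_a-P_\alpha(\mathcal{W}_a\setminus\{a\})$, and $R^d_\alpha$ on $\mathcal{W}_d$ symmetrically with the mass balanced at $d$. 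Both systems are nonnegative and have constant total masses $c_a$ and $c_d$. Stochastic monotonicity of $R^a$ on $\mathcal{W}_a$ follows because every up-set of $\mathcal{W}_a$ is either $\mathcal{W}_a$ itself (constant mass $c_a$) or avoids the minimum $a$, in which case it is an up-set of $\mathcal{S}$ and inherits monotonicity from $P$; the argument for $R^d$ is dual, using that every up-set of $\mathcal{W}_d$ contains $d$ and its $\mathcal{W}_d$-complement is a down-set of $\mathcal{S}$. Since $\mathcal{W}_a$ and $\mathcal{W}_d$ are W-class by Definition~\ref{w-glued.diamond}(iii), Corollary~\ref{w.cor} yields monotone recursive inverse transforms $\zeta^a:\mathcal{A}\to\mathcal{W}_a$ and $\zeta^d:\mathcal{A}\to\mathcal{W}_d$ realizing $R^a$ and $R^d$.

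\textbf{Residual and conclusion.} The residual $P^{\mathrm{rem}}_\alpha:=P_\alpha-R^a_\alpha-R^d_\alpha$ is a nonnegative system supported on $\tilde{S}$ with common total mass $1-c_a-c_d$, and satisfies $P^{\mathrm{rem}}_a(d)=P_a(\mathcal{W}_d)-c_d=0$ and $P^{\mathrm{rem}}_d(a)=P_d(\mathcal{W}_a)-c_a=0$ by the extremal choices. A case analysis over up-sets $\tilde{U}$ of $\tilde{\mathcal{S}}$ shows $P^{\mathrm{rem}}$ is stochastically monotone on $\tilde{\mathcal{S}}$: when neither $a$ nor $d$ is in $\tilde{U}$, the set $\tilde{U}$ is itself an up-set of $\mathcal{S}$; when $d\in\tilde{U}$ and $a\notin\tilde{U}$, the set $\tilde{U}\cup(\mathcal{W}_d\setminus\{d\})$ is an up-set of $\mathcal{S}$ with $P^{\mathrm{rem}}_\alpha(\tilde{U})=P_\alpha(\tilde{U}\cup(\mathcal{W}_d\setminus\{d\}))-c_d$; and $a\in\tilde{U}$ forces $\tilde{U}=\tilde{S}$ with constant total mass. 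Lemma~\ref{w.lemma} thus applies and gives a monotone realization $\tilde{\xi}:\mathcal{A}\to\tilde{\mathcal{S}}$ of $P^{\mathrm{rem}}+\theta I$ for some $\theta>0$. Placing $\zeta^a$, $\zeta^d$, and $\tilde{\xi}$ on disjoint sub-intervals of a common sample space of total length $1+\theta$ produces a monotone random map $\xi:\mathcal{A}\to\mathcal{S}$ realizing $R^a+R^d+P^{\mathrm{rem}}+\theta I=P+\theta I$, as required.

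\textbf{Main obstacle.} The core technical task is the simultaneous verification that the specific extremal choices $c_a=P_d(\mathcal{W}_a)$ and $c_d=P_a(\mathcal{W}_d)$ yield nonnegativity of each piece, common masses on the branches, stochastic monotonicity of $P^{\mathrm{rem}}$, and the $P^{\mathrm{rem}}_a(d)=P^{\mathrm{rem}}_d(a)=0$ hypothesis required by Lemma~\ref{w.lemma}. These four constraints are tightly coupled, since $c_a$ is forced by the down-set property of $\mathcal{W}_a$ while $c_d$ is forced by the up-set property of $\mathcal{W}_d$; once their joint consistency is established, the remainder of the proof reduces to mechanical invocations of Corollary~\ref{w.cor} on each branch and Lemma~\ref{w.lemma} on the residual.
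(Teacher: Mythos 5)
Your decomposition is in substance the same as the paper's: your $R^a+R^d$ is exactly the auxiliary system $P'$ that the paper places on $W_a\cup W_d$ (same formulas at $a$ and $d$), and your residual is the paper's $P''=P-P'$, fed into Lemma~\ref{w.lemma} via $P''_a(d)=P''_d(a)=0$. Where you genuinely differ is the assembly: the paper builds one recursive inverse transform on the tree obtained by gluing $\mathcal{W}'$ and $\tilde{\mathcal{W}}$ at $d$ and then modifies it, while you realize the two branch systems by Corollary~\ref{w.cor} and the residual by Lemma~\ref{w.lemma} separately and concatenate the three realizations on disjoint subintervals. That concatenation is legitimate: each piece is monotone into an induced subposet of $\mathcal{S}$, and the three measures sum to $P+\theta I$, so it is a simpler route to the same conclusion.

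However, the verifications you yourself flag as the core contain false steps. Definition~\ref{w-glued.diamond} only glues $a$ at \emph{a minimal} element of $\mathcal{W}_a$, $d$ at \emph{a maximal} element of $\mathcal{W}_d$, and $b,c$ at a minimal or maximal element of $\mathcal{W}_b,\mathcal{W}_c$; so $a$ need not be the minimum of $\mathcal{W}_a$ nor of $\tilde{\mathcal{S}}$, and $d$ need not be the maximum of $\mathcal{W}_d$. Consequently (i) the dichotomy ``every up-set of $\mathcal{W}_a$ is either $\mathcal{W}_a$ or avoids $a$'' and the claim ``every up-set of $\mathcal{W}_d$ contains $d$'' are false (e.g.\ $\mathcal{W}_a$ may have a second minimal element incomparable to $a$), and (ii) ``$a\in\tilde{U}$ forces $\tilde{U}=\tilde{S}$'' fails whenever $W_b$ or $W_c$ contains elements not above $b$, resp.\ $c$. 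Moreover, nonnegativity of $R^a_\alpha(a)=c_a-P_\alpha(W_a\setminus\{a\})$ is asserted but never argued; it does not follow from $P_\alpha(W_a)\ge c_a$. All of this is repairable: since induced copies of $\mathcal{S}_1$ and its dual are excluded, nothing of $\mathcal{S}$ lies below $a$ or above $d$; hence $W_a\setminus\{a\}$ is an up-set and $W_d\setminus\{d\}$ a down-set of $\mathcal{S}$ (which supplies the missing nonnegativity), for an up-set $U\ni a$ of $\mathcal{W}_a$ the complement $W_a\setminus U$ is a down-set of $\mathcal{S}$ (which gives monotonicity of $R^a$, dually for $R^d$), and for an up-set $\tilde{U}\ni a$ of $\tilde{\mathcal{S}}$ one has $P^{\mathrm{rem}}_\alpha(\tilde{U})=P_\alpha(\tilde{U}\cup W_a\cup W_d)-c_a-c_d$ with $\tilde{U}\cup W_a\cup W_d$ an up-set of $\mathcal{S}$. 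With these corrections your argument goes through; as written, the key monotonicity checks rest on structural assumptions the hypotheses do not grant.
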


\begin{proof}
Let $p$ be the common mass of $P$,
and let $\hat{\mathcal{W}}^{(\eta_a)}$ be the subposet of $\mathcal{S}$
induced on $W_a\cup\{d\}$.
Then we define
the poset $\mathcal{W}'$ of $\hat{\mathcal{W}}^{(\eta_a)}$ and
$\mathcal{W}_d$ glued and rooted at $d$ where
$C(1) = \{\eta_a,\sigma_1,\ldots,\sigma_M\}$
indexes the successors $u_1^{(\eta_a)}=a$ and $u_1^{(\sigma_i)}$'s
covered by $u_1^{(1)}=d$.
We introduce a stochastically monotone system
$P'=(P'_\alpha:\alpha\in\mathcal{A})$ of measures on $\mathcal{W}'$ by
\begin{equation*}
  P'_\alpha(y) = \begin{cases}
    P_d(W_a)-P_\alpha(W_a\setminus\{a\}) & \mbox{if $y=a$;} \\
    P_a(W_d)-P_\alpha(W_d\setminus\{d\}) & \mbox{if $y=d$;} \\
    P_\alpha(y) & \mbox{if $y\in W'\setminus\{a,d\}$,}
  \end{cases}
\end{equation*}
where $p'=P_d(W_a)+P_a(W_d)$ is the common mass of $P'$.
By $(F'_\alpha:\alpha\in\mathcal{A})$ we denote the corresponding system of
distribution functions on $\mathcal{W}'$.

The system $P''=P-P'$ is viewed as a stochastically monotone system of
measures on the W-glued diamond $\tilde{\mathcal{S}}$ of Lemma~\ref{w.lemma},
satisfying $P''_a(d) = P''_d(a) = 0$.
By Lemma~\ref{w.lemma} we obtain a realizably monotone system $\tilde{Q}$ which
satisfies $\tilde{Q}+\theta I=P''+\theta^* I$,
with $\theta=\max\{p''-\tilde{q},0\}$
and $\theta^*=\max\{\tilde{q}-p'',0\}$,
where $\tilde{q}$ and $p''$
are the common masses of $\tilde{Q}$ of $P''$ respectively.
Here we start from the system $\tilde{Q}$ on $\tilde{\mathcal{S}}$,
and introduce a stochastically monotone system
$F=(F_\alpha:\alpha\in\mathcal{A})$ of distribution functions
on the poset $\tilde{\mathcal{W}}$ of
$\hat{\mathcal{W}}^{(\eta_b)}$ and $\hat{\mathcal{W}}^{(\eta_c)}$ glued and rooted at $d$.
In the proof of Lemma~\ref{w.lemma}
we have demonstrated that
a modified system of recursive inverse transforms
$\tilde{X}_{\mu,\alpha}$
is monotone on $\tilde{\mathcal{S}}$ and realizes $\tilde{Q}+\theta I$.

Define the poset $\mathcal{W}$ of $\mathcal{W}'$ and $\tilde{\mathcal{W}}$
glued and rooted at $d$, and enumerate the corresponding
$C(1) = \{\eta_b,\eta_c,\eta_a,\zeta_1,\ldots,\zeta_M\}$.
Then we can extend $F$ on $\mathcal{W}$ by setting
\begin{equation*}
  F_\alpha(y) = \begin{cases}
    F_\alpha(y) & \mbox{if $y\in\tilde{W}\setminus\{d\}$;}\\
    F'_\alpha(y) & \mbox{if $y\in W'\setminus\{d\}$;}\\
    p+\theta^* & \mbox{if $y=d$.}
  \end{cases}
\end{equation*}
Using the interlacing distribution $\mu$ of Lemma~\ref{interlace.lem},
by Algorithm~\ref{map.alg} we can construct the recursive inverse transform
\begin{equation}\label{map.on.w}
  X_{\mu,\alpha}(\omega) = \begin{cases}
    X_{\mu,\alpha}^{(\eta_b)}(\omega)
    & \mbox{if $0\le\omega<\mu^{(1)}\lceil\eta_b\rceil$;}\\
    X_{\mu,\alpha}^{(\eta_c)}(\omega-\mu^{(1)}\lfloor\eta_c\rfloor)
    & \mbox{if $\mu^{(1)}\lfloor\eta_c\rfloor\le\omega<\mu^{(1)}\lceil\eta_c\rceil$;}\\
    X_{\mu,\alpha}^{(\eta_a)}(\omega-\mu^{(1)}\lfloor\eta_a\rfloor)
    & \mbox{if $\mu^{(1)}\lfloor\eta_a\rfloor\le\omega<\mu^{(1)}\lceil\eta_a\rceil$;}\\
    X_{\mu,\alpha}^{(\zeta_i)}(\omega-\mu^{(1)}\lfloor\zeta_i\rfloor)
    & \mbox{if
      $\mu^{(1)}\lfloor\zeta_i\rfloor\le\omega<\mu^{(1)}\lceil\zeta_i\rceil$
      for some $i$;}\\
    d
    & \mbox{if $\mu(1-)\le\omega<\mu(1)$,}\\
  \end{cases}
\end{equation}
where
$\mu^{(1)}\lceil\eta_b\rceil=F_a(b)$,
$\mu^{(1)}\lceil\eta_c\rceil=F_a(b)+F_a(c)=\tilde{q}$,
$\mu^{(1)}\lceil\eta_a\rceil=\tilde{q}+F'_d(a)$,
\begin{math}
  \mu^{(1)}\lceil\zeta_M\rceil=\mu(1-)=\tilde{q}+F'_d(a)+F'_a(d-)
  =\tilde{q}+p'-P_a(d),
\end{math}
and
$\mu(1)=\tilde{q}+p'+\theta$.
The modification $\tilde{X}_{\mu,\alpha}$ of $X_{\mu,\alpha}$
can be made by setting
(i) $\tilde{X}_{\mu,\alpha}^{(\eta_b)}(\omega)=a$ for $\omega\in I_{\mu,\alpha}^{(\eta_b)}$,
(ii) $\tilde{X}_{\mu,\alpha}^{(\eta_c)}(\omega)=a$ for $\omega\in I_{\mu,\alpha}^{(\eta_c)}$,
and (iii) $\tilde{X}_{\mu,\alpha}(\omega)=\alpha$ for $\omega\in [\tilde{q}+p',\tilde{q}+p'+\theta)$
in a manner similar to the proof of Lemma~\ref{w.lemma}.
The resulting $\tilde{X}_{\mu,\alpha}$ is monotone on $\mathcal{S}$,
and realizes $P+\theta^* I$.
\end{proof}

We now consider a stochastically monotone transition probability
$P = (P_x: x\in\mathcal{S})$ on a poset $\mathcal{S}$ of W-glued diamond,
and complete the sufficiency proof of Proposition~\ref{pra.conjecture}.
In a series of constructive proofs from Lemma~\ref{y.me}
to Theorem~\ref{w.theorem} we find a way to develop a monotone map
$\tilde{X}_{\mu,\alpha}$ which realizes $(P_\alpha + \theta I_\alpha: \alpha\in\mathcal{A})$
for some $\theta\ge 0$.
We refer to the following result of Theorem~\ref{acyclic} and Lemma~\ref{index.glued},
and show that $P$ is realizably weakly monotone.
Consequently by Proposition~\ref{weak.equivalence}
we establish Proposition~\ref{pra.conjecture}.

\begin{theorem}[Section~4 of~\cite{fm2001}]\label{acyclic}
Suppose that $\mathcal{W}$ is an acyclic poset.
Then a stochastically monotone system $(P_x:x\in\mathcal{W})$ of
measures on an arbitrary poset $\mathcal{S}$ is realizably monotone.
\end{theorem}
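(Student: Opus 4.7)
The plan is to induct on $|\mathcal{W}|$, exploiting the fact that an acyclic connected poset has a Hasse diagram that is a tree. The base case $|\mathcal{W}|=1$ is trivial: a single random variable with law $P_x$ realizes the system. For the inductive step, I would pick a leaf $v$ of the Hasse diagram and denote its unique Hasse-neighbor by $u$. After dualizing if necessary, assume $u<v$ in $\mathcal{W}$, so that $v$ is a maximal element. The induced subposet $\mathcal{W}'=\mathcal{W}\setminus\{v\}$ remains acyclic, and $(P_x:x\in\mathcal{W}')$ remains stochastically monotone on $\mathcal{S}$; by the induction hypothesis there exists a random monotone map $\eta:\mathcal{W}'\to\mathcal{S}$ realizing it.

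Next I would extend $\eta$ by appending a single random variable $\xi_v$. Stochastic monotonicity supplies $P_u\preceq P_v$, and since $\eta(u)\sim P_u$, a conditional version of Strassen's theorem --- that any random variable $X$ of law $\mu$ with $\mu\preceq\nu$ on a finite poset can be coupled with some $Y\sim\nu$ satisfying $Y\ge X$ almost surely --- lets me construct $\xi_v$ jointly with $\eta$ satisfying $\xi_v\ge\eta(u)$ and $\xi_v\sim P_v$. The tree structure now pays off: because $v$ is a leaf of the Hasse diagram and is maximal, every $w\in\mathcal{W}$ with $w\le v$ must satisfy $w\le u$ (any chain witnessing $w<v$ has to pass through the unique Hasse-neighbor $u$), and by monotonicity of $\eta$ we obtain $\eta(w)\le\eta(u)\le\xi_v$. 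Thus the extended map is monotone on all of $\mathcal{W}$, completing the induction.

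The main obstacle will be executing the conditional Strassen step cleanly, since $\xi_v$ must be jointly defined not merely with $\eta(u)$ but with the entire map $\eta$. The standard device is to enlarge the underlying probability space by an independent uniform, specify the regular conditional distribution of $\xi_v$ given $\eta(u)$ via the two-variable Strassen coupling of $P_u$ and $P_v$, and declare this conditional law independent of the remaining coordinates of $\eta$ given $\eta(u)$. Finiteness of $\mathcal{S}$ reduces the measure-theoretic bookkeeping to a routine check, and the monotonicity of the extended map then follows from the chain of inequalities in the previous paragraph. A minor housekeeping remark: if $\mathcal{W}$ is disconnected one treats its connected components independently, since realizable monotonicity on a poset with no comparabilities between two subsets reduces to realizable monotonicity on each subset separately.
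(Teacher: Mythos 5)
Your proof is correct and follows essentially the same route the paper indicates: it treats the statement as a consequence of the two-measure Nachbin--Strassen coupling, proved by induction on the tree-shaped Hasse diagram of $\mathcal{W}$. Your conditional-coupling step (attaching $\xi_v$ given $\eta(u)$, conditionally independent of the other coordinates) is precisely the content of the paper's gluing argument (Lemma~\ref{index.glued}) applied to the two-element chain $\{u,v\}$ glued to $\mathcal{W}\setminus\{v\}$ at $u$.
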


\begin{lemma}\label{index.glued}
Let $\mathcal{S}$ be a poset,
and let $\mathcal{V}'$ and $\mathcal{V}''$ be posets
such that $V'\cap V''=\{\alpha\}$.
Let $\mathcal{V}$ be the poset obtained
from $\mathcal{V}'$ and $\mathcal{V}''$ glued at $\alpha$,
and let $P=(P_v:v\in\mathcal{V})$ be a system of measures on $S$.
If $P'=(P_v:v\in\mathcal{V}')$ and $P''=(P_v:v\in\mathcal{V}'')$
are realizably monotone on $\mathcal{S}$ then so is $P$.
\end{lemma}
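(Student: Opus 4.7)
The plan is to construct a random monotone map $\xi:\mathcal{V}\to\mathcal{S}$ that realizes $P$ by combining the given realizations of $P'$ and $P''$ via a conditional coupling at the shared index $\alpha$. First, I would invoke the hypothesis to obtain random monotone maps $\xi':\mathcal{V}'\to\mathcal{S}$ and $\xi'':\mathcal{V}''\to\mathcal{S}$, on possibly distinct probability spaces, realizing $P'$ and $P''$ respectively; in particular both $\xi'(\alpha)$ and $\xi''(\alpha)$ are marginally distributed as $P_\alpha$.

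Next I would build a single probability space on which the two realizations agree at $\alpha$. Sample $X\sim P_\alpha$, and then conditionally on $X$ draw $\xi'$ from the conditional law of $\xi'$ given $\xi'(\alpha)=X$ and independently draw $\xi''$ from the conditional law of $\xi''$ given $\xi''(\alpha)=X$. This preserves the joint laws of $\xi'$ on $\mathcal{V}'$ and of $\xi''$ on $\mathcal{V}''$, while forcing $\xi'(\alpha)=\xi''(\alpha)=X$ almost surely. I would then define
\begin{equation*}
  \xi(v) = \begin{cases} \xi'(v) & \text{if } v\in V', \\ \xi''(v) & \text{if } v\in V'', \end{cases}
\end{equation*}
which is unambiguous at $\alpha$ by the coupling. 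Realizability $\mathbb{P}(\xi(v)=x)=P_v(x)$ is then immediate from the marginals of $\xi'$ and $\xi''$.

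The only nontrivial step is monotonicity. If $v_1\le v_2$ in $\mathcal{V}$ lie on the same side (both in $V'$ or both in $V''$), monotonicity is inherited directly from $\xi'$ or $\xi''$. The essential case is $v_1\in V'\setminus\{\alpha\}$ and $v_2\in V''\setminus\{\alpha\}$ (or the symmetric case). Since $V'\cap V''=\{\alpha\}$, any cover chain in the Hasse diagram of the glued poset $\mathcal{V}$ witnessing $v_1\le v_2$ must pass through $\alpha$, so $v_1\le\alpha$ in $\mathcal{V}'$ and $\alpha\le v_2$ in $\mathcal{V}''$ (or vice versa). Monotonicity of $\xi'$ and $\xi''$ then gives $\xi(v_1)=\xi'(v_1)\le\xi'(\alpha)=X=\xi''(\alpha)\le\xi''(v_2)=\xi(v_2)$.

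I expect no genuine obstacle here; the argument is a standard gluing-by-conditioning construction. The only point that deserves explicit attention is the factoring-through-$\alpha$ observation in the monotonicity verification, which is the reason the lemma is stated for gluing at a \emph{single} shared vertex.
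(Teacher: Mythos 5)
Your construction is correct and is essentially the paper's own proof: the paper defines the glued law by the product formula $Q(h)=Q''(\pi_{V''\setminus\{\alpha\}}(h)\,|\,h(\alpha))\,Q'(\pi_{V'}(h))$, which is exactly your conditional-independence coupling of $\xi'$ and $\xi''$ given agreement at $\alpha$. Your explicit remark that any comparability across the two halves factors through $\alpha$ is the same (implicit) monotonicity verification, so there is nothing to add.
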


In the W-glued diamond $\mathcal{S}$
we view $\mathcal{W}_\alpha$ as an acyclic subposet
glued to the diamond $\mathcal{A}$ at each $\alpha\in\mathcal{A}$.
By Theorem~\ref{acyclic}
we find $(P_x + \theta I_x: x \in\mathcal{W}_\alpha)$
realizably monotone for each acyclic poset $\mathcal{W}_\alpha$, $\alpha\in A$;
thus, so is $P + \theta I$ by Lemma~\ref{index.glued},
and equivalently, $P$ is realizably weakly monotone.

We conclude this subsection by presenting the proof of
Lemma~\ref{index.glued},
and discuss a constructive method of realization for Theorem~\ref{acyclic}
in the next subsection.

\begin{proof}[Proof of Lemma~\ref{index.glued}]
In this proof we assume that $P_x$'s are probability measures on $S$.
Then there exist random monotone maps $(\xi'_v:v\in\mathcal{V}')$
and $(\xi''_v:v\in\mathcal{V}'')$
respectively realizing $P'$ and $P''$.
We consider the pmf $Q'$ on $S^{V'}$
generated by $(\xi'_v:v\in V')$,
and the conditional probability mass function
$Q''(\cdot|x)$
of $(\xi''_v:v\in V''\setminus\{\alpha\})$
given $\xi''_\alpha=x$.
We can introduce a pmf $Q$ on $S^V$ by
\begin{equation*}
  Q(h) =
  Q''(\pi_{V''\setminus\{\alpha\}}(h)|h(\alpha))Q'(\pi_{V'}(h)),
  \quad
  h\in S^V,
\end{equation*}
where $\pi_{V'}(h)$ denotes the map $h$ restricted on $V'$.
A system $(\xi_v:v\in\mathcal{V})$ of $S$-valued random variables
generated by $Q$ is monotone and it marginally realizes $P$.
Thus, $P$ is realizably monotone.
\end{proof}

\subsection{A construction of Nachbin-Strassen realization}
\label{strassen}

Strassen~\cite{strassen} has characterized stochastically ordered measures
$P_1\preceq P_2$ on a poset $\mathcal{S}$ by an ordered
realization of maps $X_1$ and $X_2$ from $[0,p)$ to $\mathcal{S}$.
Similarly to Proposition~\ref{map.prop},
the pair $(X_1,X_2)$ is marginally realized in the sense that
\begin{equation}\label{realize}
  \lambda(\{\omega:X_i(\omega)\in B\})=P_i(B),
  \quad
  i=1,2,
\end{equation}
for any subset $B$ of $S$,
and ordered so that
\begin{equation}\label{ordered}
  X_1(\omega)\le X_2(\omega)
  \mbox{ in $\mathcal{S}$ whenever $\omega\in [0,p)$.}
\end{equation}
The existence of such a pair $(X_1,X_2)$ has been known as Nachbin-Strassen
theorem~\cite{kko}, and given in a general setting of closed partial ordering
(cf. \cite{nachbin}).
Theorem~\ref{acyclic} is viewed as a corollary to this characterization
and proven by induction, which is a straightforward application of
Lemma~\ref{index.glued}.

Let $\mathcal{A}$ be the diamond of Figure~\ref{named.posets}(i),
and let $\mathcal{S}$ be the W-glued diamond of Theorem~\ref{w.theorem}.
In this subsection we consider a stochastically ordered pair,
$P_1\preceq P_2$, of measures on $\mathcal{S}$,
and present a construction of Nachbin-Strassen realization
in Proposition~\ref{w.strassen}.

As in the proof of Lemma~\ref{w.lemma} we define
the subposets $\mathcal{Y}'$ and $\mathcal{Y}''$
induced respectively on $\{a,d\}\cup W_b$ and $\{a,d\}\cup W_c$,
and the subposets $\hat{\mathcal{W}}^{(\eta_b)}$
and $\hat{\mathcal{W}}^{(\eta_c)}$
induced respectively on $\{d\}\cup W_b$ and $\{d\}\cup W_c$.
We set $q'=\max\{P_1(W_b),P_2(W_b)\}$
and $q''=\max\{P_1(W_c),P_2(W_c)\}$,
and introduce measures on $\mathcal{Y}'$ by
\begin{align*}
  \tilde{Q}'_1(y) &= \begin{cases}
    q'-P_1(W_b) & \mbox{if $y=a$;}\\
    0 & \mbox{if $y=d$;}\\
    P_1(y) & \mbox{if $y\in W_b$,}
  \end{cases}
  \\
  \tilde{Q}'_2(y) &= \begin{cases}
    0 & \mbox{if $y=a$;}\\
    q'-P_2(W_b) & \mbox{if $y=d$;}\\
    P_2(y) & \mbox{if $y\in W_b$,}
  \end{cases}
\end{align*}
and measures on $\mathcal{Y}''$ by
\begin{align*}
  \tilde{Q}''_1(y) &= \begin{cases}
    q''-P_1(W_c) & \mbox{if $y=a$;}\\
    0 & \mbox{if $y=d$;}\\
    P_1(y) & \mbox{if $y\in W_c$,}
  \end{cases}
  \\
  \tilde{Q}''_2(y) &= \begin{cases}
    0 & \mbox{if $y=a$;}\\
    q''-P_2(W_c) & \mbox{if $y=d$;}\\
    P_2(y) & \mbox{if $y\in W_c$.}
  \end{cases}
\end{align*}
We modify $\tilde{Q}'_1$ and $\tilde{Q}''_1$
respectively by
\begin{align*}
  Q^{(\eta_b)}_1(y) &= \begin{cases}
    q'-P_1(W_b\setminus\{b\}) & \mbox{if $y=b$;}\\
    0 & \mbox{if $y=d$;}\\
    P_1(y) & \mbox{if $y\in W_b\setminus\{b\}$,}
  \end{cases}
  \\
  Q^{(\eta_c)}_1(y) &= \begin{cases}
    q''-P_1(W_c\setminus\{c\}) & \mbox{if $y=c$;}\\
    0 & \mbox{if $y=d$;}\\
    P_1(y) & \mbox{if $y\in W_c\setminus\{c\}$.}
  \end{cases}
\end{align*}
And we set $Q^{(\eta_b)}_2=\tilde{Q}'_2$
and $Q^{(\eta_c)}_2=\tilde{Q}''_2$, and view them
as measures on $\hat{\mathcal{W}}^{(\eta_b)}$
and $\hat{\mathcal{W}}^{(\eta_c)}$ respectively.

\begin{lemma}
$Q^{(\eta_b)}_1\preceq Q^{(\eta_b)}_2$
on $\hat{\mathcal{W}}^{(\eta_b)}$,
and $Q^{(\eta_c)}_1\preceq Q^{(\eta_c)}_2$
on $\hat{\mathcal{W}}^{(\eta_c)}$.
\end{lemma}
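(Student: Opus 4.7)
The plan is to verify $Q^{(\eta_b)}_1(U)\le Q^{(\eta_b)}_2(U)$ directly for every up-set $U$ of $\hat{\mathcal{W}}^{(\eta_b)}$, splitting on whether $b$ is a minimal or a maximal element of $\mathcal{W}_b$; the companion inequality on $\hat{\mathcal{W}}^{(\eta_c)}$ then follows verbatim after interchanging $b$ and $c$. A preliminary accounting shows that both measures have common total mass $q'$ (both collapse to $P_j(W_b)+(q'-P_j(W_b))$), so passage to down-sets via complementation is available when convenient.

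The structural input I would rely on is \emph{insulation}: in a W-glued diamond, removing the four diamond arcs disconnects $\mathcal{S}$ into four W-class components, so $\mathcal{W}_b$ meets the rest of $\mathcal{S}$ only at the single vertex $b$. Hence, when $b$ is minimal in $\mathcal{W}_b$, any up-set of $\mathcal{W}_b\setminus\{b\}$ is automatically an up-set of $\mathcal{S}$; dually, when $b$ is maximal, any down-set of $\mathcal{W}_b\setminus\{b\}$ is a down-set of $\mathcal{S}$. Moreover $d$ is maximal in $\mathcal{S}$, since it is maximal in $\mathcal{W}_d$ and its only other relations go downward to $b$ and $c$.

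When $b$ is minimal, the elements of $W_b\setminus\{b\}$ and $d$ are pairwise incomparable in $\hat{\mathcal{W}}^{(\eta_b)}$, so the up-sets other than $\varnothing$ and $\hat{W}^{(\eta_b)}$ (on which both measures are $0$ or $q'$) are $\{d\}$, $U'$, and $U'\cup\{d\}$ for $U'$ an up-set of $\mathcal{W}_b\setminus\{b\}$; unfolding the piecewise definitions reduces each case to $P_1(U')\le P_2(U')$, which follows from $P_1\preceq P_2$ applied to the $\mathcal{S}$-up-set $U'$, together with $q'-P_2(W_b)\ge 0$. When $b$ is maximal, the nontrivial up-sets are $\{d\}$ and $\{b,d\}\cup U'$; for the latter, a short calculation rewrites the desired inequality as $P_1(D)\ge P_2(D)$ with $D=(W_b\setminus\{b\})\setminus U'$, which is a down-set of $\mathcal{S}$ by insulation, and then applying $P_1\preceq P_2$ to the $\mathcal{S}$-up-set $D^c$ together with the common total mass of $P_1,P_2$ yields the inequality.

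The main obstacle I anticipate is the insulation step, which is where the W-glued-diamond hypothesis enters essentially; once insulation has been formalized from Definition~\ref{w-glued.diamond}, the remainder is direct manipulation of the piecewise definitions of $Q^{(\eta_b)}_1$ and $Q^{(\eta_b)}_2$ and routine case-checking of a handful of up-sets.
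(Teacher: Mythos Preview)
Your plan is sound in outline and rests on the same structural point as the paper---insulation of $W_b$ inside $\mathcal{S}$---but the paper organizes the verification differently. Rather than enumerating up-sets, it invokes the rooted-tree distribution-function criterion~\eqref{pdf.stocle} on $(\hat W^{(\eta_b)},d)$: since $Q^{(\eta_b)}_1$ and $Q^{(\eta_b)}_2$ coincide with $P_1$ and $P_2$ on $W_b\setminus\{b\}$, the inequalities of~\eqref{pdf.stocle} at every $x\in W_b\setminus\{b\}$ are inherited from $P_1\preceq P_2$ on $\mathcal{S}$, and the only check left is
\[
F^{(\eta_b)}_1(b)=q'\ \ge\ P_2(W_b)=F^{(\eta_b)}_2(b).
\]
This packaging avoids any case split on whether $b$ is minimal or maximal in $\mathcal{W}_b$.

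There is, however, a real gap in your enumeration. You tacitly upgrade ``$b$ is minimal in $\mathcal{W}_b$'' to ``$b$ is the minimum of $\mathcal{W}_b$'' (and dually in your second case). A W-class poset may well have several minimal (or maximal) elements; for instance $\mathcal{W}_b$ could be the three-element poset $b<x>m'$ with $b,m'$ both minimal. In your Case~1 this produces proper up-sets of $\hat{\mathcal{W}}^{(\eta_b)}$ that contain $b$ yet miss $m'$, and these do not appear on your list $\{d\},\,U',\,U'\cup\{d\}$. Symmetrically, in Case~2 you omit up-sets that avoid $b$ whenever $\mathcal{W}_b$ has a second maximal element. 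The clean repair is to split on whether $b\in U$, not on the orientation of $b$: if $b\notin U$ then $U\setminus\{d\}\subseteq W_b\setminus\{b\}$ is an up-set of $\mathcal{S}$ by insulation, and one gets $Q^{(\eta_b)}_1(U)=P_1(U\setminus\{d\})\le P_2(U\setminus\{d\})+(q'-P_2(W_b))\mathbf{1}_{\{d\in U\}}=Q^{(\eta_b)}_2(U)$; if $b\in U$ (hence $d\in U$) then $D=\hat W^{(\eta_b)}\setminus U\subseteq W_b\setminus\{b\}$ is a down-set of $\mathcal{S}$, and the desired inequality is $q'-P_1(D)\le q'-P_2(D)$. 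This single dichotomy handles both orientations of $b$ at once and completes your argument.
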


\begin{proof}
Let $F^{(\eta_\alpha)}_i$ be the distribution function corresponding to
$Q^{(\eta_\alpha)}_i$ for each $\alpha=b,c$ and $i=1,2$ introduced by the
rooted tree $(\hat{\mathcal{W}}^{(\eta_\alpha)},d)$.
In order to verify $F^{(\eta_b)}_1\preceq F^{(\eta_b)}_2$
it suffices to show in light of \eqref{pdf.stocle} that
\begin{equation*}
  F^{(\eta_b)}_1(b) = q' \ge P_2(W_b) = F^{(\eta_b)}_2(b) .
\end{equation*}
Similarly we can verify $F^{(\eta_c)}_1\preceq F^{(\eta_c)}_2$.
\end{proof}

\begin{lemma}\label{lem.on.w'}
Let $\mathcal{W}'$ be the subposet induced on $W_a\cup W_d$,
and let $P'_i=P_i-\tilde{Q}'_i-\tilde{Q}''_i$, $i=1,2$, be measures on $\mathcal{W}'$.
Then $P'_1\preceq P'_2$ on $\mathcal{W}'$.
\end{lemma}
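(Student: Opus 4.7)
The plan is to verify the up-set inequality $P'_1(U)\le P'_2(U)$ for every up-set $U$ of $\mathcal{W}'$ by relating it to a carefully chosen up-set of $\mathcal{S}$ and invoking $P_1\preceq P_2$. First, unpacking the definitions of $\tilde{Q}'_i$ and $\tilde{Q}''_i$, one finds that $P'_i$ vanishes identically on $W_b\cup W_c$ and agrees with $P_i$ on $(W_a\cup W_d)\setminus\{a,d\}$, differing only at the two diamond corners: $P'_1(a)=P_1(a)+P_1(W_b)+P_1(W_c)-q'-q''$ and $P'_1(d)=P_1(d)$, while $P'_2(a)=P_2(a)$ and $P'_2(d)=P_2(d)+P_2(W_b)+P_2(W_c)-q'-q''$. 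Thus $P'_i$ is indeed supported on $W'=W_a\cup W_d$, with total mass $P_i(\mathcal{S})-q'-q''$.

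Since the only cross-branch relation in $\mathcal{W}'$ is $a<d$, every up-set $U$ of $\mathcal{W}'$ splits as $U=U_a\cup U_d$ with $U_a$ an up-set of $\mathcal{W}_a$, $U_d$ an up-set of $\mathcal{W}_d$, and $a\in U_a$ forcing $d\in U_d$. This leaves three cases, since $a\in U$, $d\notin U$ is impossible. When $a,d\notin U$, the set $U$ is itself an up-set of $\mathcal{S}$ because elements of $W_a\setminus\{a\}$ and $W_d\setminus\{d\}$ have no $\mathcal{S}$-uppers outside their own branch, so $P'_1(U)=P_1(U)\le P_2(U)=P'_2(U)$ is immediate. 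When $a,d\in U$, the extension $U^\star=U_a\cup W_b\cup W_c\cup U_d$ is an up-set of $\mathcal{S}$ since all $\mathcal{S}$-uppers of $W_b$ and $W_c$ terminate in $d\in U_d$; a direct calculation gives $P'_i(U)=P_i(U^\star)-q'-q''$, so the desired inequality reduces to $P_1(U^\star)\le P_2(U^\star)$.

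The main obstacle is the remaining case $a\notin U$, $d\in U$, where the asymmetric placement of the correction terms---at $a$ for $P'_1$ and at $d$ for $P'_2$---creates a shortfall $(q'-P_2(W_b))+(q''-P_2(W_c))$ that must be absorbed. Writing $P'_2(U)-P'_1(U)=[P_2(U)-P_1(U)]-(q'-P_2(W_b))-(q''-P_2(W_c))$ and applying $P_1\preceq P_2$ to each of the four $\mathcal{S}$-up-sets $U$, $U\cup W_b$, $U\cup W_c$, and $U\cup W_b\cup W_c$---all up-closed thanks to $d\in U_d$ absorbing the upper relations from $W_b$ and $W_c$---one extracts four lower bounds on $P_2(U)-P_1(U)$, namely $0$, $P_1(W_b)-P_2(W_b)$, $P_1(W_c)-P_2(W_c)$, and the sum of the latter two. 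Taking the maximum and invoking the elementary identity $\max\{0,x,y,x+y\}=\max\{0,x\}+\max\{0,y\}$ reproduces exactly $(q'-P_2(W_b))+(q''-P_2(W_c))$, closing this case and completing the proof.
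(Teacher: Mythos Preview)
Your proof is correct, and it takes a somewhat different route from the paper's.  The paper works in the rooted-tree distribution-function framework of Section~\ref{rit}: it introduces the distribution functions $F'_i$ on $(W',d)$ and reduces the claim $F'_1\preceq F'_2$ to the single section $(\leftarrow,a]=W_a$, asserting the inequality $P_1(W_a\cup W_b\cup W_c)-q'-q''\ge P_2(W_a)$ without further argument (the remaining sections lie in $W_a\setminus\{a\}$ or $W_d\setminus\{d\}$, where $P'_i=P_i$, so they follow directly from $P_1\preceq P_2$).  You instead verify the up-set inequality for every up-set of $\mathcal{W}'$ by a three-case split on whether $a$ and $d$ belong to $U$; your hardest case ($a\notin U$, $d\in U$) is exactly what underlies the paper's single asserted inequality, and your use of the identity $\max\{0,x,y,x+y\}=\max\{0,x\}+\max\{0,y\}$ together with the four up-sets $U$, $U\cup W_b$, $U\cup W_c$, $U\cup W_b\cup W_c$ makes explicit the case analysis that the paper suppresses.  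The paper's approach is shorter because it piggybacks on the machinery of Section~\ref{rit}; yours is more elementary and self-contained, and in fact fills in what the paper leaves to the reader.
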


\begin{proof}
We find
\begin{align*}
  P'_1(y) &= \begin{cases}
    P_1(\{a\}\cup W_b\cup W_c)-q'-q'' & \mbox{if $y=a$;} \\
    P_1(d) & \mbox{if $y=d$;} \\
    P_1(y) & \mbox{if $y\in W'\setminus\{a,d\}$,}
  \end{cases}
  \\
  P'_2(y) &= \begin{cases}
    P_2(a) & \mbox{if $y=a$;} \\
    P_2(\{d\}\cup W_b\cup W_c)-q'-q'' & \mbox{if $y=d$;} \\
    P_2(y) & \mbox{if $y\in W'\setminus\{a,d\}$.}
  \end{cases}
\end{align*}
We can introduce by the rooted tree $(W',d)$
the distribution function $F'_i$
corresponding to $P'_i$ for $i=1,2$.
Then we obtain
\begin{align*}
  F'_1(a)
  & = P'_1(W_a) = P_1(W_a\cup W_b\cup W_c)-q'-q''
  \\
  & \ge P_2(W_a) = P'_2(W_a) = F'_2(a),
\end{align*}
which implies that $F'_1\preceq F'_2$.
\end{proof}

We define the poset $\mathcal{W}$ of 
$\hat{\mathcal{W}}^{(\eta_b)}$, $\hat{\mathcal{W}}^{(\eta_c)}$
and $\mathcal{W}'$ glued and rooted at $d$,
and introduce a stochastically ordered pair,
$F_1\preceq F_2$, of distribution functions on $\mathcal{W}$ by
\begin{equation*}
  F_i(y) = \begin{cases}
    F^{(\eta_b)}_i(y) & \mbox{if $y\in W_b$;}\\
    F^{(\eta_c)}_i(y) & \mbox{if $y\in W_c$;}\\
    F'_i(y) & \mbox{if $y\in W'\setminus\{d\}$;}\\
    p & \mbox{if $y=d$,}
  \end{cases}
  \quad
  i=1,2,
\end{equation*}
where $p=P_1(S)=P_2(S)$ is the common mass.
By Algorithm~\ref{rp.tree}
we generate a rooted plane tree $(K,1)$ of indices
associated with rooted subtrees of $(W,d)$,
and enumerate the collection
$C(1) = \{\eta_b,\eta_c,\eta_a,\zeta_1,\ldots,\zeta_M\}$
of indices of successor of the root $1$.
We can find a distribution function $\mu$ on $K$
interlaced with $F_1$ and $F_2$ and satisfying $\mu(1)=p$
(which can be obtained by Lemma~\ref{interlace.lem}).
By Algorithm~\ref{map.alg} we form the recursive inverse transforms
$X_{\mu,1}$ and $X_{\mu,2}$ of
\eqref{map.on.w} with $\alpha=1,2$.
By Proposition~\ref{map.prop}
the pair $(X_{\mu,1},X_{\mu,2})$ realizes $F_1$ and $F_2$,
and satisfies $X_{\mu,1}\le X_{\mu,2}$ in $\mathcal{W}$.

\begin{proposition}\label{w.strassen}
There exists a modification
$(\tilde{X}_{\mu,1},\tilde{X}_{\mu,2})$ of the pair
$(X_{\mu,1},X_{\mu,2})$ of recursive inverse transforms
such that it satisfies \eqref{realize}--\eqref{ordered}
with $(X_1,X_2)=(\tilde{X}_{\mu,1},\tilde{X}_{\mu,2})$.
\end{proposition}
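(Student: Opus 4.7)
The plan is to adapt the constructive modifications from the proofs of Lemma~\ref{y.me}, Lemma~\ref{w.lemma}, and Theorem~\ref{w.theorem} to the stochastically ordered pair $P_1\preceq P_2$ on $\mathcal{S}$ in place of a diamond-indexed monotone system. Proposition~\ref{map.prop} supplies a pair $(X_{\mu,1},X_{\mu,2})$ on $[0,p)$ that is pointwise ordered in $\mathcal{W}$ and marginally realizes $F_1,F_2$. Our task is to redirect portions of these maps so that the resulting pair marginally realizes $P_1,P_2$ on the W-glued diamond $\mathcal{S}$, while retaining pointwise ordering — now in the richer poset $\mathcal{S}$ whose diamond edges $(a,b),(a,c),(b,d),(c,d)$ were discarded when passing to $\mathcal{W}$.

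The key simplification over Lemma~\ref{y.me} is that $\tilde{Q}'_2(a)=\tilde{Q}''_2(a)=0$ together with $Q^{(\eta_b)}_2=\tilde{Q}'_2$ and $Q^{(\eta_c)}_2=\tilde{Q}''_2$, so $X_{\mu,2}$ needs no modification and we set $\tilde{X}_{\mu,2}=X_{\mu,2}$. On the $\eta_b$-branch $Q^{(\eta_b)}_1$ deposits $q'-P_1(W_b\setminus\{b\})$ at $b$, whereas the target $\tilde{Q}'_1$ deposits $P_1(b)$ at $b$ and $q'-P_1(W_b)=\tilde{Q}'_1(a)$ at $a$. Consequently we want to redirect a Lebesgue set of length $\tilde{Q}'_1(a)$ inside the preimage of $b$ under $X_{\mu,1}^{(\eta_b)}$ to the value $a$; by \eqref{map.i} that preimage is exactly $I_{\mu,1}^{(\eta_b)}$, whose length $Q^{(\eta_b)}_1(b)$ dominates $\tilde{Q}'_1(a)$, so we may choose a sub-interval $J_b\subseteq I_{\mu,1}^{(\eta_b)}$ of length $\tilde{Q}'_1(a)$ and set $\tilde{X}_{\mu,1}^{(\eta_b)}(\omega)=a$ on $J_b$. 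An analogous choice of $J_c\subseteq I_{\mu,1}^{(\eta_c)}$ handles the $c$-branch, and no change is made on the $\eta_a$- or $\zeta_i$-branches (where the maps already realize $P'_1,P'_2$ on $\mathcal{W}'$). Stitching the pieces as in \eqref{map.on.w} defines $\tilde{X}_{\mu,1}$.

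Verifying \eqref{realize} is routine: the mass of $\tilde{X}_{\mu,1}$ at $b$ becomes $Q^{(\eta_b)}_1(b)-\tilde{Q}'_1(a)=P_1(b)$, the mass at $a$ sums across branches to $\tilde{Q}'_1(a)+\tilde{Q}''_1(a)+P'_1(a)=P_1(a)$, and all other vertices are untouched; $\tilde{X}_{\mu,2}$ realizes $P_2$ without change. The principal obstacle is \eqref{ordered}: one must confirm that replacing $b$ by $a$ on $J_b$ (and $c$ by $a$ on $J_c$) is compatible with the diamond ordering of $\mathcal{S}$, which was not present in $\mathcal{W}$. For $\omega\in J_b$ we have $X_{\mu,1}(\omega)=b$, so pointwise ordering in $\mathcal{W}$ forces $\tilde{X}_{\mu,2}(\omega)=X_{\mu,2}(\omega)$ to lie among the elements at or above $b$ in $\mathcal{W}$; this set equals $\{b,d\}$ when $b$ is a maximal element of $\mathcal{W}_b$ and $\{d\}\cup W_b$ when $b$ is minimal, and in either case each such element is $\ge a$ in $\mathcal{S}$ by transitivity through $a\le b$. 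Hence $a=\tilde{X}_{\mu,1}(\omega)\le\tilde{X}_{\mu,2}(\omega)$ in $\mathcal{S}$; the case $\omega\in J_c$ is symmetric, and outside $J_b\cup J_c$ the pointwise ordering in $\mathcal{S}$ is inherited directly from $\mathcal{W}$ since the two values lie in a common branch on which the induced orderings agree.
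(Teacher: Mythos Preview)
Your approach is essentially the same as the paper's: leave $X_{\mu,2}$ untouched (since $\tilde{Q}'_2(a)=\tilde{Q}''_2(a)=0$), and on each of the $\eta_b$- and $\eta_c$-branches redirect a portion of the preimage $I_{\mu,1}^{(\eta_\alpha)}=\{X_{\mu,1}^{(\eta_\alpha)}=\alpha\}$ from $\alpha$ down to $a$, of Lebesgue length $\tilde{Q}'_1(a)$ (resp.\ $\tilde{Q}''_1(a)$). The paper writes out one particular such subset by mirroring the formulas of Lemma~\ref{y.me} (with a parameter $\gamma$ and weights $\tilde{F}_1^{(\sigma_i)}(b)$), but as you correctly observe, in the two-index Strassen setting \emph{any} measurable subset of the right length works, because the only ordering constraint to maintain is $a\le X_{\mu,2}(\omega)$, and this follows immediately from $b\le X_{\mu,2}(\omega)$ in $\hat{\mathcal{W}}^{(\eta_b)}$ together with $a<b$ in $\mathcal{S}$. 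Your verification of \eqref{ordered} is in fact more explicit than the paper's one-line assertion.

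Two small points of wording. First, $I_{\mu,1}^{(\eta_b)}$ from \eqref{map.i} is a finite union of intervals rather than a single interval, so ``sub-interval'' should read ``measurable subset'' (or a union of sub-intervals); this does not affect the argument. Second, your description of $\{x\in\hat{W}^{(\eta_b)}:x\ge b\}$ as all of $\{d\}\cup W_b$ when $b$ is minimal in $\mathcal{W}_b$ is not quite right (minimal is not minimum), but this set is never actually used---the transitivity $a<b\le X_{\mu,2}(\omega)$ is all you need, and that part is stated correctly.
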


\begin{proof}
We find $X_{\mu,1}^{(\eta_b)}(\omega)=b$
and modify it to $\tilde{X}_{\mu,1}^{(\eta_b)}(\omega)=a$ if
\begin{equation*}
  \omega\in
  \left[\mu(\eta_b-),\tilde{F}_1^{(\eta_b)}(a)\right)
  \cup
  \bigcup_{\sigma_i\in C(\eta_b)}\left\{
  \mu^{(\eta_b)}\lfloor\sigma_i\rfloor +
     \left[\tilde{F}_1^{(\sigma_i)}(b),\mu(\sigma_i)\right)
       \right\},
\end{equation*}
where
\begin{align*}
  \gamma &= \max\{\tilde{Q}'_1(a)-F_1(b)+\mu(\eta_b-),0\}; \\
  \tilde{F}^{(\eta_b)}_1(a) &= \mu(\eta_b-)+\tilde{Q}'_1(a)-\gamma; \\
  \tilde{F}^{(\sigma_i)}_1(b)
  &= \mu(\sigma_i) -
  \frac{\gamma[\mu(\sigma_i)-F^{(\eta_b)}_1(u_1^{(\sigma_i)})]}{\mu(\eta_b-)-F^{(\eta_b)}_1(b-)}.
\end{align*}
Similarly we can change from $X_{\mu,1}^{(\eta_c)}(\omega)=c$
to $\tilde{X}_{\mu,1}^{(\eta_c)}(\omega)=a$
on a subset of length $\tilde{Q}''_1(a)$.
The pair $(\tilde{X}_{\mu,1},\tilde{X}_{\mu,2})$
satisfies \eqref{realize}--\eqref{ordered}.
\end{proof}

\bibliographystyle{plain}
\bibliography{monotonicity}

\begin{thebibliography}{1}

\bibitem{cinlar}
Erhan \c{C}inlar.
\newblock {\em Introduction to stochastic processes}.
\newblock Prentice-Hall, Inc., Englewood Cliffs, NJ, 1975.

\bibitem{pra}
Paolo Dai~Pra, Pierre-Yves Louis, and Ida~Germana Minelli.
\newblock Realizable monotonicity for continuous-time {M}arkov processes.
\newblock {\em Stochastic Process. Appl.}, 120(6):959--982, 2010.

\bibitem{fm2001}
James~Allen Fill and Motoya Machida.
\newblock Stochastic monotonicity and realizable monotonicity.
\newblock {\em Ann. Probab.}, 29(2):938--978, 2001.

\bibitem{fm2002}
James~Allen Fill and Motoya Machida.
\newblock Realizable monotonicity and inverse probability transform.
\newblock In {\em Distributions with given marginals and statistical
  modelling}, pages 63--71. Kluwer Acad. Publ., Dordrecht, 2002.

\bibitem{kko}
T.~Kamae, U.~Krengel, and G.~L. O'Brien.
\newblock Stochastic inequalities on partially ordered spaces.
\newblock {\em Ann. Probability}, 5(6):899--912, 1977.

\bibitem{classw}
Motoya Machida.
\newblock Monotonicity equivalence and synchronizability for a system of
  probability distributions.
\newblock {\em arXiv preprint arXiv:2408.10896}, 2024.

\bibitem{massey}
William~A. Massey.
\newblock Stochastic orderings for {M}arkov processes on partially ordered
  spaces.
\newblock {\em Math. Oper. Res.}, 12(2):350--367, 1987.

\bibitem{nachbin}
Leopoldo Nachbin.
\newblock {\em Topology and order}, volume No. 4 of {\em Van Nostrand
  Mathematical Studies}.
\newblock D. Van Nostrand Co., Inc., Princeton, N.J.-Toronto, Ont.-London,
  1965.
\newblock Translated from the Portuguese by Lulu Bechtolsheim.

\bibitem{strassen}
Volker Strassen.
\newblock The existence of probability measures with given marginals.
\newblock {\em The Annals of Mathematical Statistics}, 36(2):423--439, 1965.

\end{thebibliography}

\end{document}